\newtheorem{theorem}{Theorem}
\newtheorem{prop}[theorem]{Proposition}
\newtheorem{cor}[theorem]{Corollary}
\newtheorem{lemma}[theorem]{Lemma}
\newtheorem{example}[theorem]{Example}
\newtheorem{corollary}[theorem]{Corollary}
\newtheorem{definition}[theorem]{Definition}
\theoremstyle{remark}
\newtheorem{remark}[theorem]{Remark}
\def\charge{ {\rm {charge}}}
\def\weight{ {\rm {weight}}}
\def\cocharge{ {\rm {cocharge}}}
\def\shape{ {\rm {shape}}}
\def \SSYT{\mathcal{SSYT}}
\def \SSCT{\mathcal{SSCT}}
\newcommand{\wt}{\mathrm{wt}}
\newcommand{\word}{\mathrm{word}}
\renewcommand{\th}{^{\text{th}}}
\DeclareMathOperator{\inv}{inv}
\def\weight{ {\rm {weight}}}
\def\maj{ {\rm {maj}}}
\def\zigmaj { {\rm {zmaj}}}
\def\ski{ {\rm {betrayal}}}
\def\shape{ {\rm {shape}}}
\def \SSYT{\mathcal{SSYT}}
\def \F{\mathcal{F}}
\def \CT{\mathcal{CT}} % these are colored tabloid
\def \C{\mathcal{C}} % these are circloid
\def \T{\mathcal{T}} %tabloid
\def \RPP{\mathrm{RPP}}
\def\sh{ {\rm {sh}}}
\def\des{ {\rm {Des}}}
\def\jhat{ {\hat \jmath}}
\newcommand\Ccolor[1]{ C_{#1}}
\newcommand\Cletter[1]{ C^{\geq {#1}}}
\begin{document}

\begin{center}
{\Large Colorful combinatorics and Macdonald polynomials 
%and applications to equivariant Schubert calculus
}

\bigskip
\bigskip

{\large 
Ryan Kaliszewski 
 and Jennifer Morse
\footnote{Partially supported by the NSF grant DMS-1600953.}
}

\bigskip

{\it Lehigh University

Department of Mathematics

Bethlehem, PA 19104 }

\bigskip

{\it University of Virginia

Department of Mathematics

Charlottesville, VA 22903} 
\end{center}

\begin{abstract}
The non-negative integer cocharge statistic on words was introduced in the 1970's by Lascoux and Sch\"utzenberger 
to combinatorially characterize the Hall-Littlewood polynomials.  Cocharge has since been used to
explain phenomena ranging from the graded decomposition of Garsia-Procesi modules  to
the cohomology structure of the Grassman variety.
Although its application to contemporary variations of these problems had been deemed
intractable, we prove that the two-parameter, symmetric Macdonald polynomials are 
generating functions of a distinguished family of {\it colored} words.
Cocharge adorns one parameter and the second measure its deviation from cocharge on words without color.
We use the same framework to expand the plactic monoid, apply Kashiwara's crystal theory
to various Garsia-Haiman modules, and to address problems in $K$-theoretic Schubert calculus.
%Further, the same framework expands the plactic monoid, pairs with Kashiwara's crystal theory,
%and applies to problems in $K$-theoretic Schubert calculus.
\end{abstract}

\begin{center}
{\it Dedicated to Alain Lascoux}
\end{center}

%\tableofcontents
\section{Introduction}

Kostka-Foulkes polynomials, $K_{\lambda\mu}(t)\in\mathbb N[t]$, 
describe the connections between characters of $GL_n(\mathbb{F}_q)$ and
the Hall-Steinitz algebra~\cite{Green}, give characters of cohomology rings of Springer fibers 
for ${\rm GL}_n$~\cite{Spr,HoSpr}, and are graded multiplicities of modules for the general linear group 
obtained by twisting functions on the nullcone by a line bundle~\cite{Bry}.
Lusztig~\cite{[Lu],Lus} showed they are
the $t$-analog of the weight multiplicities in the irreducible representations of the classical Lie algebras,
$$
K_{\lambda\mu}(t)=\sum_{\sigma \in W}(-1)^{\ell(\sigma)}\mathcal P_t(\sigma(\lambda+\rho))-(\mu+\rho))\,,
$$
obtained from a $t$-deformation of Kostant's partition function $\mathcal P$ defined by
$$
\prod_{\text{positive roots }\alpha} \frac{1}{(1-tx^\alpha)} = \sum_\beta \mathcal P_t(\beta) x^\beta\,.
$$

%The cohomology rings of Spring fibers can be realized as a quotient $R/I$ in the ring of polynomials 
%which decomposes, as an $S_n$-module, into irreducible components.
%and the decomposition 
%, as an $S_n$-module, 
%De Concini and Procesi~\cite{DeP} realized the cohomology rings of Springer fibers as a quotient in
%the ring of polynomials.  Garsia and Procesi studied the quotient as an $S_n$-module and found
%the Hall-Littlewood 

Algebraically, Kostka-Foulkes polynomials are the entries in transition matrices between the 
Schur and the Hall-Littlewood $\{H_\mu(x;t)\}$ bases for the algebra $\Lambda$ of symmetric functions 
in variables $x = x_1,x_2,\ldots,$ over the field $\mathbb Q(t)$.  In fact, this reflects the graded decomposition 
of a simple quotient of the coinvariant ring viewed as an $S_n$-module~\cite{GP};
each irreducible submodule of polynomials with homogeneous degree $r$ 
corresponds to a Schur function with coefficient $t^r$, and the sum over
all irreducibles corresponds to a Hall-Littlewood polynomial.

Kostka-Foulkes polynomials are wrapped in the most fundamental combinatorial ideas.
Namely, the set of words forms a monoid under the operations of RSK-insertion and jeu-de-taquin.
The monoid structure was first motivated by Sch\"utzenberger~\cite{Schutz} in his proof that the Schubert 
structure constants for the cohomology of the Grassmann variety are enumerated by Young tableaux with a 
distinguished {\it Yamanouchi} property.
The structure is compatible with the assignment of each word to a non-negative integer (statistic) 
called {\it cocharge}.  Lascoux and Sch\"utzenberger~\cite{[LS2]} proved that 
the generating function for tableaux weighted by this statistic is precisely $K_{\lambda\mu}(t)$.
Consequently, the spectrum of topics surrounding Kostka-Foulkes polynomials is accessible 
from a purely combinatorial study of cocharge.  For example,
\begin{equation}
\label{kftableauxintro}
\tilde
H_\mu({x};t)=\sum t^{\cocharge(T)}\,s_{{\rm shape}(T)}({x})\,,
\end{equation}
summing over all Young tableaux $T$ with $\mu_1$ ones, $\mu_2$ twos, etc.

In the 1980's, Macdonald introduced a basis for $\Lambda$ over the field $\mathbb Q(q,t)$
to unify the Hall-Littlewood and the Jack polynomials (wave equations 
of the Calogero-Sutherland-Moser model~\cite{For}).
Ensuant studies of the basis have impacted an impressive range 
of areas including the representation theory of quantum groups \cite{EK},
double affine Hecke algebras \cite{Cherednik},
%the Calogero-Sutherland model in particle physics \cite{CSmodel},
the shuffle algebra and diagonal harmonics~\cite{HHLRU},
the geometry of Hilbert schemes~\cite{Haiman},
affine Schubert calculus \cite{[LLM]},
the elliptic Hall algebra of Shiffmann-Vasserot \cite{ScVa},
and extensions of HOMFLY polynomials for knot invariants \cite{ors}.

Early characterizations of Macdonald polynomials were oblique, revealing little more 
than that they are elements of $\mathbb Q(q,t)[x_1,x_2,\ldots]$.  Nevertheless, using 
brute force to compute examples, Macdonald conjectured that the entries $K_{\lambda\mu}(q,t)$ 
of certain of their transition matrices lie in $\mathbb N[q,t]$.  Garsia modified Macdonald's 
polynomials so that $K_{\lambda\mu}(q,t)$ appeared as Schur expansion coefficients of the
resulting polynomials $\tilde H_\mu(x;q,t)$, thus
piquing the interest of representation theorists for whom Schur functions
are synonymous with irreducible $S_n$-modules.
The {\it $q,t$-Kostka coefficients} in
\begin{equation}
\label{qtintro}
\tilde H_\mu(x;q,t)=\sum_\lambda K_{\lambda\mu}(q,t)\,s_\lambda(x)\,,
\end{equation}
have since been a matter of great interest.

Rich theories were born from the compelling feature that
the $q,t$-Kostka coefficients reduce to the Kostka-Foulkes polynomials at
$q=0$.
In~\cite{GHqt}, Garsia and Haiman introduced $S_n$-modules $R_\mu$, 
for $\mu$ a partition of $n$, given by the space of polynomials
in variables $x_1, \ldots x_n;y_1, \ldots y_n$ spanned by
all derivatives of a certain simple determinant  $\Delta_\mu$.
They conjectured that the dimension of $R_\mu$ equals $n!$, and 
that the modules provide a representation theoretic framework 
for~\eqref{qtintro}.  Their interpretation was designed 
to imply the Macdonald positivity conjecture.  Haiman spent years putting
together algebraic geometric tools which ultimately led him to prove the
conjectures in~\cite{Haiman}.

Formula~\eqref{kftableauxintro} set the gold standard for defining Macdonald polynomials, but
cocharge was abandoned after efforts to give a manifestly positive formula 
for generic $\tilde H_\mu(x;q,t)$ led no further than the most basic examples.
In 2004, an explicit formula for Macdonald polynomials 
was established by Haglund-Haiman-Loehr.  Rather than using Young tableaux and cocharge, the
formula involves the major index and an intricate 
inversion-like statistic:
\begin{equation}
\label{jimformula}
\tilde H_\lambda({x};q,t)=\sum_F q^{\inv(F)} t^{\maj(F)}
\prod_{i} x_{F(i)}\,,
\end{equation}
over all $\mathbb Z_+$-valued functions (fillings) $F$ on the partition 
$\lambda$.  The Schur expansion was expected to come shortly behind this breakthrough,
but it took another decade even to recover the Hall-Littlewood case.  
In~\cite{Roberts}, Austin Roberts converted the $q=0$ case of~\eqref{jimformula} 
into a new Schur expansion formula:
\begin{equation}
\label{roberts}
\tilde H_\lambda({x};0,t)
=\sum_{F\in \mathcal U\atop \inv(F)=0} t^{\maj(F)}\,s_{\weight(F)}({x})\,,
\end{equation}
over a mysterious subset $\mathcal U$ of fillings (see~\S~\ref{sec:faithful}).
Roberts' questioning of the comparison of his formula with the earlier formulation~\eqref{kftableauxintro}
sparked our interest and led us to revive the study of cocharge.

We discovered that the classical combinatorics of cocharge supports Macdonald polynomials as
naturally as it does the less intricate setting surrounding Hall-Littlewood polynomials.
The key idea is a broadening of the plactic monoid~\cite{LSplactic,LLTplactic} whereby each letter in a word is colored.
Of particular importance is the subset of {\it tabloids}, words with an increasing condition
used by Young to define (Specht) modules.  We prove that Macdonald polynomials are colored tabloid generating functions,
weighted by $\cocharge$ and a {\it betrayal} statistic which measures the variation of $\cocharge$ on colored words 
from its value on usual words.

\noindent
{\bf Theorem.}
{\it
For any partition $\mu$,
\begin{equation}
\label{rjformula}
\tilde H_\mu({x};q,t)=\sum_{T}
 q^{\ski(T)} t^{\cocharge(T)} {x}^{\shape(T)}\,, 
\end{equation}
over colored tabloids with $\mu_1$ ones, $\mu_2$ twos, and so forth.
}

%Colored tabloids, and the associated cocharge underlying Macdonald polynomials,
%have applications which extend those of the tableaux cocharge statistic.

Further applications of colored words are geometrically inspired.
The classical example in Schubert calculus addresses the cohomology of the Grasmann variety
where the structure constants $c_{\lambda\mu}^\nu$ count Yamanouchi tableaux.  Schubert calculus 
vastly expanded with efforts to characterize the structure of $K$-theory and (quantum) cohomology of other varieties;
the problems are a combinatorial search for alternative, or more refined notions, of Yamanouchi.  
Thus, the combinatorial ideas surrounding the plactic monoid are often revisited in Schubert calculus.  
In fact, $c_{\lambda\mu}^\nu$ can be viewed as the number of skew tableaux with zero cocharge and
the broader scope of colored words fits in well.

We extend Van Leeuwen's approach~\cite{VanL} to the Yamanouchi condition
using Young {\it tableaux companions}. We show that colored tabloids serve as {companions} for the generic
$\mathbb Z_+$-valued functions used in the Macdonald polynomials~\eqref{jimformula}.
From this point of view, a {\it super-Yamanouchi} condition arises and is applicable to $K$-theoretic 
Schubert calculus problems as well as Kostka-Foulkes polynomials.
The companion map $\mathfrak c$ simultaneously gives relations between
\begin{itemize}
\item the formulas~\eqref{kftableauxintro} and~\eqref{roberts} for $q=0$ Macdonald polynomials,
\item genomic tableaux of Pechenik-Yong~\cite{PY} and set-valued tableaux of~\cite{Buch}, introduced to 
study $K$-theoretic problems in Schubert calculus, and
\item cocharge and  the Lenart-Schilling statistic~\cite{LenSch} for computing the (negative of the) energy 
function on affine crystals.
\end{itemize}
Colored words also support equivariant $K$-theory of Grassmannians and Lagrangians,
but details are sequestered in a forthcoming paper.

We investigate representation theoretic lines with the theory of crystal bases,
introduced by Kashiwara \cite{Kas1,Kas2} in an investigation of quantized enveloping algebras $U_q(\mathfrak g)$
associated to a symmetrizable Kac--Moody Lie algebra $\mathfrak g$.
Integrable modules for quantum groups
play a central role in two-dimensional solvable lattice models.
When the absolute temperature is zero ($q=0$), there is a distinguished
{\it crystal basis} with many striking features.  
The most remarkable is that the internal structure of an integrable 
representation can be combinatorially realized by associating the 
basis to a colored oriented graph whose arrows are imposed by the 
Kashiwara (modified root) operators.
From the crystal graph, characters can be computed by enumerating elements
with a given weight, and the tensor product decomposition
into irreducible submodules is encoded by
the disjoint union of connected components.  Hence, progress in
the field comes from having a natural realization of crystal graphs.

A double crystal structure on colored tabloids using only
the type-$A$ crystal operators and jeu-de-taquin 
provides a lens giving clarity to problems in Macdonald theory
and in Schubert calculus.
Several crystal graphs arise simultaneously through different colored tabloid manifestations of tabloids.
From these, we deduce Schur expansion formulas for dual Grothendieck polynomials and
the $q=0,1$ cases of Macdonald polynomials.
The $q=1$ result perfectly mimics the classical formula~\eqref{kftableauxintro} for $q=0$.  In particular,

\noindent
{\bf Theorem.}
{\it
For any partition $\mu$,
$$
\tilde H_\mu({x};1,t)= 
\sum
t^{\cocharge(T)}\,s_{\shape(T)} 
\,,
$$
over colored tabloids with column increasing entries.
}

%Another crystal restricts to tabloids with highest weights identified by column-strict tableaux and the LS formula
%\eqref{kftableauxintro} is recovered.  In particular, the crystal graph supports the Garsia-Procesi modules.

\section{Preliminaries}
\subsection{Garsia-Haiman modules}

The algebra $\Lambda$ of symmetric functions in infinitely many indeterminants 
$x_1,x_2,\ldots,$ over the field $\mathbb Q(q,t)$ has bases are indexed by {\it partitions},
$\lambda=(\lambda_1\geq \ldots\geq \lambda_\ell>0)\in\mathbb Z^\ell$.
The {\it monomial basis} is defined by elements $m_\lambda=\sum_{\alpha} x^{\alpha}$ taken over
all distinct rearrangements $\alpha$ of $(\lambda_1,\ldots,\lambda_\ell,0,\ldots)$.
The {\it homogeneous basis} has elements $h_\lambda=h_{\lambda_1}\cdots h_{\lambda_\ell}$,
where $h_r({x})=\sum_{i_1\leq\ldots\leq i_r}x_{i_1}\cdots x_{i_r}$, and 
the {\it power basis} has elements $p_\lambda=p_{\lambda_1}\cdots p_{\lambda_\ell}$, with
$p_r(x)=\sum_i x_i^r$.  A basis element indexed by partition $\lambda$ of {\it degree} $d=\sum_i \lambda_i$ 
(denoted by $\lambda\vdash d$) is a sum of monomials of degree $d$.

%$e_r(x)=\sum_{i_1<\ldots <i_r}x_{i_1} \cdots x_{i_r}$, 
The {\it Hall-inner product}, $\langle\,,\rangle$, is defined on $\Lambda$ by
$$
\langle p_\lambda, p_\mu\rangle = \delta_{\lambda\mu}\,z_\lambda \quad\text{where }
\;\;
z_\lambda = 
\prod_i\alpha_i! i^{\alpha_i}\;\text{ for } \lambda = (\cdots,2^{\alpha_2},1^{\alpha_1})\,,
$$
and $\delta_{\lambda\mu}$ evaluates to 0 when $\lambda\neq\mu$ and is otherwise 1.
In fact, the basis of {\it Schur functions}, $s_\lambda$, can be defined as the 
unique orthonormal basis which is unitriangularly related to the monomial basis;
for each $\lambda\vdash d$,
\[ 
s_\lambda = m_\lambda + \sum_{\mu\vdash d\atop\mu\lhd\lambda} a_{\lambda\mu}\,m_\mu\,,\]
where {\it dominance order} $\mu\lhd \lambda$
is defined by $\lambda_1+\cdots+\lambda_k\leq \mu_1+\cdots+\mu_k$ for all $k$.

Macdonald~\cite{Macbook} proved the existence of another basis of polynomials,
$P_\lambda(x;q,t)$, also unitriangularly related to the monomials, but orthogonal
with respect to the $q,t$-deformation of $\langle\,,\rangle$ ,
$$
\langle p_\lambda,p_\mu\rangle_{q,t} = \delta_{\lambda\mu}\,
z_\lambda\,\prod_j\frac{1-q^{\lambda_j}}{1-t^{\lambda_j}}\,.
$$
Of interest to combinatorialists, but not apparent from the definition, 
he conjectured that $P_\lambda(x;q,t)$ have certain transition coefficients lying in $\mathbb Z_{\geq 0}[q,t]$.
Garsia modified $P_\lambda(x;q,t)$ into 
polynomials $\tilde H_\lambda(x;q,t)$ to rephrase Macdonald's conjecture as one about Schur positivity:  
the {\it $q,t$-Kostka coefficients} in
\begin{equation}
\label{qtkostka}
\tilde H_\mu(x;q,t)=\sum_\lambda K_{\lambda\mu}(q,t)\,s_\lambda(x)
\end{equation}
lie in $\mathbb Z_{\geq 0}[q,t]$.  
 See~\eqref{jimformula} for a precise definition of $\tilde H_\mu(x;q,t)$.

Garsia's approach appealed to a broader audience. Namely, results of Frobenius dictate
that a positive sum of Schur functions models the decomposition of an $S_n$-representation into 
its irreducible submodules.
Namely, for $\sigma\in S_n$ and $\lambda\vdash n$, the value of the irreducible character $\chi^\lambda$ of $S_n$
at $\sigma$ arises in
\begin{equation}
\label{the:frob}
s_\lambda = 
\frac{1}{n!}
\sum_{\sigma\in S_n}
\chi^\lambda(\sigma) \, p_{\tau(\sigma)}\,,
\end{equation}
where $\tau(\sigma)$ is the cycle-type of $\sigma$.
Define the linear {\it Frobenius map}
from class functions on $S_n$ to symmetric functions of degree $n$ by
$$
F_\chi = 
\frac{1}{n!}
\sum_{\sigma\in S_n}
\chi^\lambda \, p_{\tau(\sigma)}\,,
$$
and consider the Frobenius image of a doubly-graded $S_n$-module $\mathcal M=\bigoplus_{r,s} M_{r,s}$,
$$
F_{{\rm char}(\mathcal M)}(x;q,t) = 
\sum_{r,s} t^r\,q^s\,F_{{\rm char}(M_{r,s})}
\,.
$$
The function $F_{{\rm char}(\mathcal M)}(x;q,t)$ is thus a positive sum of
Schur functions with coefficients in $\mathbb Z_{\geq 0}[q,t]$ by~\eqref{the:frob}.

So launched the search for a bi-graded module $\mathcal M$ for which $\tilde H_\lambda(x;q,t)$ is the
Frobenius image.  Garsia and Procesi settled the $q=0$ case and gave the perfect guide~\cite{GP}.  
In particular, they gave an algebraic approach to Hotta and Springer's result that $K_{\lambda\mu}(0,t)$ describes
the multiplicities 
of $S_n$ characters $\chi^\lambda$ in the graded character of the cohomology 
ring of a Springer fiber, $B_\mu$.  The cohomology ring $H^*(B_\mu)$ 
can be defined by a particular quotient,
$$
R_\mu(y)=\mathbb C[y_1,\ldots,y_n]/I_\mu\,,
$$
of the coinvariant ring $R_{1^n}(y)=\mathbb C[y_1,\ldots,y_n]/\langle e_1,\ldots,e_n\rangle$.
$R_\mu(y)$ is the {\it Garsia-Procesi module} under the natural $S_n$-action permuting variables;
they proved the ideal $I_\mu$ is generated by {\it Tanisaki generators}, defined to be
the elementary symmetric functions $e_k(S)$
in the variables $S=\{y_{i_1},\ldots,y_{i_r}\}\subset \{y_1,\ldots,y_n\}$
when $r>k>|S|-\text{\# cells of $\mu$ weakly east of column $r$}$.

The simplicity of Garsia and Procesi's definition
led them to an algebraic proof that $K_{\lambda\mu}(0,t)\in\mathbb N[t]$
and offered an attack on the $q,t$-Kostka polynomials.  Given that the Frobenius 
image of $R_\mu(y)$ is $\tilde H_\mu(x;0,t)$, the task was to define an $S_n$-module
$$
R_\mu(x;y)= \mathbb C[x_1,\ldots,x_n;y_1,\ldots,y_n]/J_\mu\,,
$$
under the the diagonal $S_n$-action, simultaneously permuting the $x$ and $y$ variables,
so that 
\begin{equation}
\label{HisFrob}
\tilde H_\mu(x;q,t) = F_{{\rm char}(R_\mu(x;y))}(x;q,t)\,.
\end{equation}
Garsia and Haiman found just the candidate;  it is the ideal

$$
J_\mu = \left\{f: f\left(\frac{\partial}{\partial x_1}, \cdots, \frac{\partial}{\partial x_n}, 
\frac{\partial}{\partial y_1}, \cdots, \frac{\partial}{\partial y_n}\right) \Delta_\mu=0
\right\}
\,,
$$
where $\Delta_\mu$ is a generalization of the Vandermonde defined using a 
graphical depiction of $\mu$.
A {\it lattice square} $(i,j)$ lies in the $i$th row and $j$th column of
$\mathbb N\times \mathbb N$.  
The (Ferrers) {\it shape} of a {\it composition} $\alpha=(\alpha_1,\ldots,\alpha_\ell)
\in\mathbb Z_{\geq 0}^\ell$ is the subset of $\mathbb N\times\mathbb N$
made up of $\alpha_i$ lattice squares left-justified in the $i^{th}$ row, 
for $1\leq i\leq \ell$.  
A lattice square inside a shape $\alpha$ is called a {\it cell}.
Given $\mu\vdash n$, the cells $\{(r_1,c_1),\ldots,(r_n,c_n)\}$ in $\mu$
define 
$$
\Delta_\mu=
{\rm det}
\begin{bmatrix}
x_1^{c_1}y_1^{r_1}& x_2^{c_1}y_2^{r_1}&\cdots & x_n^{c_1}y_n^{r_1}\\
\vdots & & &\vdots\\
x_1^{c_n}y_1^{r_n}& x_2^{c_n}y_2^{r_n}&\cdots & x_n^{c_n}y_n^{r_n}\\
\end{bmatrix}
\,.
$$
Although the construction of the modules $R_\mu(x;y)$ is quite simple, 
the proof of~\eqref{HisFrob} required sophisticated geometric techniques 
developed by Haiman~\cite{Haiman}.  

\subsection{Cocharge}
How $R_\mu(x;y)$ decomposes into irreducible submodules remains an open problem.  It is 
particularly intriguing in light of the perfect description for decomposing $R_\mu(y)$
in terms of the following statistic on words.  Given a word $w$ in the alphabet $\mathcal A$, 
$w_{\mathcal B}$ is the subword of $w$ restricted to letters of $\mathcal B\subset \mathcal A$. 
When $\mathcal B=\{i\}$, we use simply $w_i=w_{\mathcal B}$.
The {\it weight} of a word $w$ is the composition $\alpha$, where
$\alpha_i$ is the number of times $i$ appears in $w$.
A word with weight $(1,\ldots,1)$ is called {\it standard}.
The {\it cocharge} of a standard word $w\in S_n$ is defined by writing $w$ counter-clockwise 
on a circle with a $\star$ between $w_1$ and $w_n$, attaching a label to each letter, 
and summing these labels.
The labels are determined iteratively starting by labeling 
1 with a zero.  Letter $i$ is then given the same label as $i-1$ as
long as $\star$ lies between $i-1$ and $i$ (reading clockwise) 
and it is otherwise incremented by 1.

The cocharge of a word $w$ with weight $\mu\vdash n$ is defined by
writing $w$ counter-clockwise on a circle and computing the cocharge of
$\mu_1$ standard subwords of $w$.  Letters
of the $i\th$ standard subword are adorned with a subscript $i$
and this subword is determined iteratively 
from $i=1$ as follows: clockwise from $\star$, choose the first occurrence 
of letter 1 and proceed on to the first occurrence of letter 2.  Continue 
in this manner until $\mu_1'$ has be given the index $1$.  
Start again at $\star$ with $i+1$, repeating the process
on letters without a subscript.
The cocharge of $w$ is the sum of the cocharge of each standard subword.
The {\it charge} of a word $w$ of weight $\mu$ is
$$
\charge(w)=n(\mu)-\cocharge(w)\,,
$$
where $n(\mu)=\sum_i (i-1)\mu_i$.

\begin{example}
The words $w_1=6714235$ and $w_2=64534223511123$ written counter-clockwise on circles
$$
\doubleairport{5,3,2,4,1,7,6}{1,0,0,1,0,2,2}{7}{2.5cm}\quad
\implies \;\text{cocharge is 6}.
\qquad \qquad
\doubleairport{6_1,4_1,5_1,3_3,4_2,2_2,2_3,3_2,5_2,1_1,1_2,1_3,2_1,3_1}
{3,2,3,0,1,0,0,1,2,0,0,0,1,2}
{14}{2.5cm}
\implies \;\text{cocharge is 15}.
$$
\end{example}

Kostka-Foulkes polynomials require only words coming from Young tableaux.
Use $\alpha\models n$ to denote that $\alpha$ is a composition of {\it degree} $n=|\alpha|=\alpha_1+\alpha_2+\cdots$.
For compositions $\alpha$ and $\beta$ where $\alpha_i\leq \beta_i$ for all $i$,
we say $\alpha\subseteq\beta$.  The {\it skew shape} of $\alpha\subseteq\beta$ 
is $\beta/\alpha$, defined by the set theoretic difference of their cells and of 
degree $|\beta|-|\alpha|$.
%The ring of coinvariants is isomorphic to the left regular representation of $S_n$, for which
%Young introduced the combinatorics of tableaux.
A {\it (semi-standard) tableau} is the filling of a skew shape with positive integers 
which increase up columns and are not decreasing 
along rows (from west to east).  

\begin{definition}
The {\it reading order} of any collection $S\subset\mathbb N\times\mathbb N$
is the total ordering on elements in $S$ defined by saying that lattice squares 
decrease from left to right, starting in the highest 
row and moving downward.
\end{definition}

Given a tableau $T$, the {\it reading word} $w=\word(T)$ is defined by taking $w_i$ to be the letter in the
$i\th$ cell of $T$, where cells are read in decreasing reading order.  The {\it weight} of tableau $T$ 
is the weight of its reading word and $T$ is called {\it standard} when $\word(T)$ is standard.
For skew shape $\lambda/\mu$ of degree $n$ and $\gamma\models n$, the set of tableaux of 
shape $\lambda/\mu$ and weight $\gamma$ is denoted by $\SSYT(\lambda/\mu,\gamma)$.
Lascoux and Sch\"utzenberger~\cite{[LS2]} proved, for partitions $\lambda$ and $\mu$ of the same degree,
\begin{equation}
\label{kftableaux}
K_{\lambda\mu}(0,t)=
\sum_{T\in \SSYT(\lambda,\mu)} t^{\cocharge(T)}\,,
\end{equation}
where the cocharge of a tableau $T$ is defined by $\cocharge(\word(T))$.

A similarly beautiful formula for the $q,t$-Kostka polynomials has been
actively pursued for decades.  Because $K_{\lambda\mu}(1,1)=|\SSYT(\lambda,1^n)|$,
the endgame is to establish a 
formula for $K_{\lambda\mu}(q,t)$ by attaching a $q$ and a $t$ 
weight to each standard tableau.

\subsection{Macdonald polynomials}

Although the Schur expansion of Macdonald polynomial still eludes us, Jim Haglund made a breakthrough in 2004 
by proposing a combinatorial formula for $\tilde H_\mu(x;q,t)$.
Rather than using semi-standard tableaux and cocharge, 
different statistics are associated to arbitrary {fillings}.

A {\it filling} $F$ of shape $\beta/\alpha$ and weight $\gamma\models|\beta|-|\alpha|$ is
any placement of letters from a word with weight $\gamma$ into
shape $\beta/\alpha$.  The entry in row $r$ and column $c$ of $F$ is 
denoted by $F_{(r,c)}$, and the set of fillings of shape $\beta/\alpha$ and weight  $\gamma$ 
is $\F(\beta/\alpha,\gamma)$.  Immediate from the definition is 
\begin{equation}
\label{multi}
|\F(\alpha/\beta,\gamma)|=\binom{|\gamma|}{\gamma_1,\gamma_2,\ldots,\gamma_{\ell(\gamma)}}\;.
\end{equation}

For a filling $F$ of partition shape $\lambda$, an {\it inversion triple} is 
a triple of entries $(r,t,s)$ which are arranged in a collection of cells in $F$ 
of the form
$$
\tableau[scY]{ r&\ldots & t \cr s \cr}\,,
$$
and meeting the criteria that $r=s\neq t$ or some cycle of $(r,t,s)$ 
is decreasing, i.e. $r>s>t$, $s>t>r$, or $t>r>s$.  If the cells containing $r$ and $t$ are in the first row we envision that $s=0$.
The {\it inversion statistic} is the number $\inv(F)$ of inversion triples in $F$. 
The {\it major index} of $F$ is
$$
\maj(F)=
\sum_{F_{(r,c)}>F_{(r-1,c)}}(\lambda_c'-r+1)
%=
%\sum_{x\in \Des(F)} (leg(x)+1)\,,
$$
%where the leg of a cell $x$ is the number of entries above it
%and 
where 
%a cell $(r,c)$ has a descent if its entry is larger than the entry in cell $(r-1,c)$ and 
$\lambda'_c$ is the number of cells in column $c$ of $\lambda$.  
Every partition $\lambda$ has a {\it conjugate} $\lambda'$ given by reflecting shape $\lambda$ about $y=x$.  
Alternatively, a filling $F$ has a \emph{descent} at cell $(r,c)$ when $F_{(r,c)}>F{(r-1,c)}$
and $\maj(F)$ is the
number of descents of $F$, each weighted by the number of cells appearing weakly above it in $F$.

\begin{example}
The following filling $F\in\F((333),(342))$ has $\maj(F)=1+2+1=4$ and $\inv(F)=4$.
$$
\text{\rm Descents:}\; 
{\tableau[scY]{ \tf 3, 2,\tf 3 \cr 2,\tf 2,1 \cr 2,1,1
 }}
\qquad
\qquad
\text{\rm Inversion triples:}\;
\tableau[scY]{ 3,\tf 2,\tf 3 \cr 2,\tf 2,1 \cr 2,1,1 }
\quad
\tableau[scY]{ 3,2,3 \cr \tf 2,2,\tf 1 \cr \tf 2,1,1 }
\quad
\tableau[scY]{ 3,2,3 \cr 2,2,1 \cr \tf 2, \tf 1,1}
\quad
\tableau[scY]{ 3,2,3 \cr 2,2,1 \cr \tf 2,1,\tf 1}
$$
\end{example}

\begin{theorem}~\cite{HHL}
\label{Hinm}
For any partition $\lambda$,
\begin{equation}
\label{jimformulabody}
\tilde H_\lambda({x};q,t)=\sum_{F\in \F(\lambda,\cdot)}
q^{\inv(F)} t^{\maj(F)}\,x^{\weight(F)}\,.
\end{equation}
\end{theorem}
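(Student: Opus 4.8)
\emph{Proof proposal.}
Write $\tilde C_\mu(x;q,t)=\sum_{F\in\F(\mu,\cdot)}q^{\inv(F)}t^{\maj(F)}\,x^{\weight(F)}$ for the right-hand side. The plan is to verify that $\tilde C_\mu$ satisfies the three conditions, due to Garsia and Haiman, that single out $\tilde H_\mu$ inside $\Lambda_{\mathbb Q(q,t)}$: that $\tilde C_\mu$ is symmetric, that the Schur expansion of the plethysm $\tilde C_\mu[X(1-q);q,t]$ is supported on partitions $\lambda\trianglerighteq\mu$ in dominance order, that the Schur expansion of $\tilde C_\mu[X(1-t);q,t]$ is supported on partitions $\lambda\trianglerighteq\mu'$, and that $\langle\tilde C_\mu,s_{(n)}\rangle=1$.

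First I would establish symmetry, which is invisible from the formula since neither $\inv$ nor $\maj$ is stable under relabelling the alphabet. Because $\maj(F)$ depends only on the set $D$ of descent cells, one may write $\tilde C_\mu=\sum_D t^{\maj(D)}\bigl(\sum_{F:\,\Des(F)=D}q^{\inv(F)}x^{\weight(F)}\bigr)$. A short combinatorial computation then identifies each inner sum with an LLT polynomial $G_{\vec\nu(D)}(x;q)$ for a tuple $\vec\nu(D)$ of skew shapes built from the columns of $\mu$ and the cut positions recorded by $D$, the HHL inversion count becoming the LLT inversion count. Since LLT polynomials are symmetric --- Lascoux--Leclerc--Thibon in the ribbon case, and in general by the Hecke-algebra argument of Grojnowski--Haiman --- the identity $\tilde C_\mu=\sum_D t^{\maj(D)}G_{\vec\nu(D)}(x;q)$ shows $\tilde C_\mu\in\Lambda$.

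With symmetry in hand, the normalization is immediate: $\langle\tilde C_\mu,s_{(n)}\rangle$ is the coefficient of $x_1^{\,n}$ in $\tilde C_\mu$, and the only filling of weight $(n)$ is the all-ones filling, which has no descents and no inversion triples and so contributes $q^0t^0=1$. The real content is the two triangularity conditions. For the first, apply $X\mapsto X(1-q)$: expanding $p_k[X(1-q)]=(1-q^k)p_k[X]$ through the monomials of $\tilde C_\mu$ rewrites $\tilde C_\mu[X(1-q);q,t]$ as a signed, $q$-weighted generating function over \emph{super-fillings} of $\mu$ --- fillings with entries in an ordered alphabet $1<\bar1<2<\bar2<\cdots$ in which the barred letters carry the sign and the power of $q$. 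One then constructs a sign-reversing involution on the super-fillings of non-straight content that preserves both $\inv$ and $\maj$ --- the natural choice toggles the barring of the smallest letter at which a prescribed local obstruction occurs --- and checks that the involution-fixed super-fillings assemble into $\sum_{\lambda\trianglerighteq\mu}c_{\lambda\mu}(q,t)\,s_\lambda$. The second triangularity condition follows either from the mirror involution for $X\mapsto X(1-t)$ or, once one proves $\tilde C_\mu(x;q,t)=\tilde C_{\mu'}(x;t,q)$ via a transpose-type bijection on $\F(\mu,\cdot)$ that trades $\inv$ for $\maj$, from the first.

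I expect the super-filling involution to be the main obstacle: it must respect $\inv$, $\maj$, and the plethystic signs simultaneously, and one still has to read the dominance bound off the fixed points. In effect all the genuine difficulty can be funneled into the symmetry and the analogous triangularity of the LLT polynomials $G_{\vec\nu}$, which rest on Hecke-algebra (Kazhdan--Lusztig) positivity rather than on elementary bijections. An alternative packaging avoids the symmetric-function side altogether: verify the companion combinatorial formula for the non-symmetric Macdonald polynomials $E_\alpha$ against the Knop--Sahi recursion by a local cell-by-cell analysis, then apply the Hecke-algebra symmetrizer to recover $\tilde H_\mu$ --- but the hard inputs (the recursion, the symmetrizer) are again non-combinatorial.
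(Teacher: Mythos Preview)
Your outline is a faithful sketch of the original Haglund--Haiman--Loehr argument: symmetry via the LLT decomposition $\tilde C_\mu=\sum_D t^{\maj(D)}G_{\vec\nu(D)}(x;q)$, the normalization from the all-ones filling, and the two triangularity conditions established through sign-reversing involutions on super-fillings (with the $q\leftrightarrow t$ symmetry handling the second). The alternative route through non-symmetric $E_\alpha$ and the Knop--Sahi recursion is also the one HHL themselves give in their companion paper.

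However, there is nothing to compare against here. This theorem appears in the paper's Preliminaries section with the citation \texttt{\textbackslash cite\{HHL\}} and \emph{no proof}; the paper simply quotes it as a known result and uses it as the starting point for its own development (Theorem~\ref{the:rjformula} and what follows). So your proposal is not an alternative to the paper's proof---the paper offers none---but rather a summary of the proof in the cited source. If the assignment was to reconstruct the paper's argument for this statement, the correct answer is that the paper defers entirely to \cite{HHL}.
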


\section{Frobenius image of Garsia-Haiman modules using cocharge}

We introduce a new combinatorial structure and prove that the
Macdonald polynomials are generating functions attached to cocharge
and a second statistic called betrayal.
%The combinatorial structure is naturally compatible with a crystal structure 
%leading immediately to the Lascoux-Sch\"utzenberger formula when $q$ is set 
%to 0 and to Theorem~\eqref{t1qtkostka} when $q=1$.  
%\label{kftableaux}

\subsection{Colored words and circloids}

A {\it colored letter} $x_i$ is a letter $x$ in an alphabet $\mathcal A$ 
adorned with a subscript (its {\it color}) $i$ from $\mathcal A$.
A {\it colored word} $w$ is a string of distinct colored letters.
The {\it weight} of $w$ is a skew composition recording the colors which adorn each letter;
$\weight(w)=\alpha/\beta$ where $\{\beta_x+1,\ldots,\alpha_x\}$
are the colors attached to letter $x$ in $w$.  When $\beta\models 0$,
we simply say the weight of $w$ is $\alpha$.

%\jennifer{Check this next paragraph carefully, there were some things I changed e.g. order of $\gamma$ etc}
Colored words also come equipped with shapes which are assigned using the {\it prismatic order} on 
colored letters: $u_v>r_c$ when $r<u \text{ or } (r=u \text{ and } c>v)$.  Equivalently,
\[
\text{ $u_v>r_c  \iff \text{ cell } (u,v)$ precedes $(r,c)$ in reading order.}
\]
A strict composition (one without zero entries) $\gamma\models n$ is a {\it shape} admitted by a 
colored word $w=w_n\cdots w_1$ if $w_1>\cdots>w_{\gamma_1}$, $w_{\gamma_1+1}>\cdots>w_{\gamma_1+\gamma_2}$, 
and so forth.  A weak composition $\gamma'\models n$ is a shape admitted by $w$ when the
strict composition obtained by removing the zeroes from $\gamma'$ is a shape admitted by $w$.

The set of all shapes admitted by a colored word $w=w_1\cdots w_n$ corresponds multisets that contain 
$$
{\rm Des}(w) = \{ p : w_p<w_{p+1}\}\,,
$$
under the bijection sending compositions of degree $n$ to sub-multisets of $\{1,\ldots,n-1\}$
defined by
$$
(\gamma_\ell,\ldots,\gamma_1)\mapsto
{\rm set}(\gamma)=\{\gamma_1,\gamma_1+\gamma_2,\ldots,\gamma_1+\cdots+\gamma_{\ell(\gamma)-1}\}
\,.  A shape is a strict composition if and only if it corresponds to a true set.
$$
%The {\it refinement order} on strict compositions of $n$ is defined by
%$\alpha\succeq\beta$ when ${\rm set}(\alpha)\subseteq{\rm set}(\beta)$.

\begin{example}
The colored word $w=3_21_33_11_12_22_11_22_33_3$ has weight $(3,3,3)$ and 
$\des(w)=\{1,3,6\}$.  It thus admits shape $\lambda=(3,3,2,1)$ and, for example,
$\beta=(3,1,2,2,1)$.
\end{example}
%\begin{example}
%\label{ex:comp}
%For $\alpha=(1,2,3,3)$ and $\beta=(1,2,2,1,3)$, 
%$\alpha\succeq\beta$ since
%${\rm set}(\alpha)=\{1,3,6\}$ and ${\rm set}(\beta)=\{1,3,5,6\}$.
%\end{example}

A circular representation of colored words is convenient when attaching statistics.
We write a colored word $w$ counter-clockwise on a circle and separate its letters 
into sectors to give a concept of shape.

%%% Circloid definition %%%
\begin{definition}
A circloid $C$ of shape $\gamma\models n$ is a placement of $n$ distinct colored letters on 
the perimeter of a subdivided circle such that, reading clockwise from a distinguished point $\star$,
$\gamma_x$ colored letters lie in decreasing prismatic order in sector $x$, for $x=1,\ldots,\ell(\gamma)$.
\end{definition}

Each circloid $C$ is uniquely associated to a colored word $w$ of the same shape by reading the
letters of $C$ in counter-clockwise order.  The {\it weight} of $C$ is defined to be $\weight(w)$.
The set of circloids of weight $\alpha/\beta$ and shape $\gamma$ is denoted by $\C(\gamma,\alpha/\beta)$.
Note that letter $b$ appears in a circloid $C\in\C(\cdot,\alpha/\beta)$ exactly $\alpha_b-\beta_b$ 
times since there are $\alpha_b-\beta_b$ colors needed to adorn the set of $b$'s.

\begin{example}
Circloids $C_1\in\C((3,3,2,1),(3,3,3))$ and $C_2\in\C((3,1,2,2,1),(3,3,3))$
with underlying $w=3_21_33_11_12_22_11_22_33_3$ are
\[
C_1= \airport{3_3,2_3,1_2,2_1,2_2,1_1,3_1,1_3,3_2}{3,3,2,1}{2.2cm}\;\;\qquad
C_2= \airport{3_3,2_3,1_2,2_1,2_2,1_1,3_1,1_3,3_2}{3,1,2,2,1}{2.2cm}\;\;
\]
\end{example}

If unspecified, entries and positions of a circloid are always taken clockwise.
For example, $3_2$ and $3_3$ lie between $1_3$ to $2_3$  in $C_1$
since $3_2$ and $3_3$ are passed when reading clockwise from $1_3$ to $2_3$. 
We consider the following two restrictions of a circloid $C$, 
$$
\Ccolor{i}= \{x_y\in C: y=i\}
\qquad
\text{and}
\qquad
\Cletter{j}= \{x_y\in C: x\geq j\}\,.
$$

For the traditionalists, we interpret circloids as fillings of shapes.
For compositions $\gamma$ and $\alpha/\beta$, a {\it colored tabloid} 
$T\in\CT(\gamma,\alpha/\beta)$ 
is a filling of shape $\gamma$ with colored letters so that row
entries are increasing from west to east under the prismatic order
and the colors adorning letter $x$ in $T$ are $\{\beta_x+1,\ldots,\alpha_x\}$.
As expected, $\alpha/\beta$ is called the {\it weight} of $T$.
It is straightforward to see that
a bijection is given by the map 
$$
\iota : \C(\gamma,\alpha/\beta)\longrightarrow \CT(\gamma,\alpha/\beta)\,,
$$
defined by putting the colored letters of sector $r$ from circloid $C$
into row $r$ of shape $\gamma$ so that the row is colored increasing
from west to east, for each $r=1,\ldots,\ell(\gamma)$.

\subsection{Circloid statistics}

The cocharge statistic on words naturally extends to circloids.
Macdonald polynomials turn out to be generating functions
of circloids, weighted by cocharge and a second statistic which measures the variation 
of cocharge from the Lascoux-Sch\"utzenberger statistic.

For a circloid $C\in\C(\cdot,\mu)$ of partition weight $\mu$,
the {\it cocharge} of $C$ is defined by
$$
\cocharge(C)=\sum_{i=1}^{\ell(\mu)} \cocharge(\Ccolor{i})\,.
$$
\begin{remark}
\label{re:uncoloredcharge}
Any word $w$ with partition weight $\mu$ can be uniquely
identified with a circloid $C\in\C(1^n,\mu)$ of the same cocharge.
%and zero betrayal.
Place the letters of $w$ counter-clockwise on a circle and 
color according to the labeling for standard subwords. That is,
moving clockwise from $\star$, label the first 1 with $i=1$.  By iteration, 
the first $x+1$ encountered in the clockwise reading from $x_1$ is colored 1.  
Once $\mu_1'$ letters have been labeled by $1$, repeat with $2$ on 
uncolored letters, and so on.
\end{remark}

The second statistic measures how different a coloring is 
from standard subwords. When choosing which letter $x$ to color $j$, each
candidate passed over in clockwise order increases the statistic by 1.
Precisely, 
$$
\ski(C)=\sum_{i\geq 1}\sum_j s_{i,j}\,,
$$
where $s_{i,j}$ is the number of $i_{\jhat}$ with $\jhat>j$ 
lying between $(i-1)_j$ and $i_j$ in $C$,
with the understanding that $0_j=\star$ for all $j=1,\ldots,\mu_1$.

\begin{example}
The circloids  in the previous example have a betrayal of 2 and cocharge of 4.
\end{example}

It is through the lens of circloids that we can prove cocharge is as fundamental to the $q,t$-Macdonald 
setting as it is to Kostka-Foulkes and Hall-Littlewoods polynomials.  We show that 
a Macdonald polynomial is none other than the shape generating function of circloids weighted by
cocharge and betrayal.  Moreover, the result follows straightforwardly 
from a correspondence between circloids and skew fillings.

%\footnote{This map is related to one used 
%in~\cite{km} to establish a set of representatives for (equivariant) $K$-theory classes}.

\begin{definition}
\label{fillingsmap}
The map $\mathfrak f$ acts on a circloid $C$ of shape $\gamma$ by placing entry $x$ 
in cell $(r,c)$, for each colored letter $r_c$ in sector $x$, moving through
sectors $x=1,\ldots,\ell(\gamma)$.
\end{definition}
 
\begin{example}
The action of $\mathfrak f$ on two circloids:
\[
\airport{2_3,1_4,3_1, 2_1, 2_2, 3_2}{2,3,1}{2.2cm}\;\;
\quad \mapsto \quad 
\;\tableau[scY]{2, 3 |2,  2,1 | \bl,\bl,\bl, 1 }
\qquad \qquad
\qquad \qquad
\airport{ 3_3,2_3,1_2,2_1,2_2,1_1,3_1,1_3,3_2}{3,3,2,1}{2.2cm}\;\;
\quad \mapsto \quad 
\;\tableau[scY]{3, 4,1 |2,  2,1 |2,1,3}
\;\;. \]
\end{example}

\begin{theorem}
\label{the:rjformula}
For any partition $\lambda$,
\begin{equation}
\label{rjformula}
\tilde H_\lambda({x};q,t)=\sum_{C\in \C(\cdot,\lambda)}
 q^{\ski(C)} t^{\cocharge(C)} {x}^{\shape(C)}\,.
\end{equation}
\end{theorem}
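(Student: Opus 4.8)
The plan is to deduce~\eqref{rjformula} from the Haglund--Haiman--Loehr formula (Theorem~\ref{Hinm}) together with the correspondence $\mathfrak f$ of Definition~\ref{fillingsmap}. The first step is to check that, for each composition $\gamma$ of $|\lambda|$, the map $\mathfrak f$ restricts to a bijection $\C(\gamma,\lambda)\to\F(\lambda,\gamma)$. A circloid of weight $\lambda$ involves exactly the colored letters $\{r_c : 1\le c\le \lambda_r\}$, so sending the letter $r_c$ lying in sector $x$ to cell $(r,c)$ with entry $x$ produces a filling whose cells form the shape $\lambda$ and whose content is $\gamma=\shape(C)$; conversely a filling $F\in\F(\lambda,\gamma)$ recovers the circloid in which sector $x$ consists of the cells carrying entry $x$, listed in decreasing prismatic (equivalently, reading) order. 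Hence $\mathfrak f$ is a weight-to-content bijection $\C(\cdot,\lambda)\to\F(\lambda,\cdot)$ with $\shape(C)=\weight(\mathfrak f(C))$, and comparing the right-hand side of~\eqref{rjformula} with Theorem~\ref{Hinm} reduces the theorem to the two statistic identities
\[
\cocharge(C)=\maj\bigl(\mathfrak f(C)\bigr)
\qquad\text{and}\qquad
\ski(C)=\inv\bigl(\mathfrak f(C)\bigr)
\]
for every circloid $C\in\C(\cdot,\lambda)$.

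For the first identity I would work one color at a time. Fix a color $j$. For partition weight $\lambda$ the restriction $\Ccolor{j}$ contains each of $1_j,2_j,\dots,(\lambda'_j)_j$ exactly once, so on forgetting colors it is a standard word; moreover the $j$th column of $F=\mathfrak f(C)$ lists the sector indices $F_{(1,j)},F_{(2,j)},\dots$ of those letters. Unwinding the circular definition of cocharge, the label assigned to $k_j$ exceeds the label of $(k-1)_j$ exactly when $\star$ does \emph{not} lie on the clockwise arc from $(k-1)_j$ to $k_j$. Since reading clockwise one passes through the sectors in the order $1,2,\dots,\ell(\gamma)$ and, inside a sector, in decreasing prismatic order — under which $k_j>(k-1)_j$ — this occurs precisely when $F_{(k,j)}>F_{(k-1,j)}$, i.e.\ exactly when $F$ has a descent at $(k,j)$. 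Summing the labels of $\Ccolor{j}$ then telescopes to $\sum_{(k,j)\ \mathrm{descent}}(\lambda'_j-k+1)$, the contribution of column $j$ to $\maj(F)$; summing over $j$ yields $\cocharge(C)=\maj(\mathfrak f(C))$.

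For the second identity I would match $\ski$ and $\inv$ term by term over the index set $\{(i,j,\jhat):\jhat>j\}$. The summand $s_{i,j}$ records the colored letters $i_{\jhat}$ with $\jhat>j$ lying on the clockwise arc from $(i-1)_j$ to $i_j$ (with $0_j=\star$); under $\mathfrak f$ the triple $i_{\jhat}$, $i_j$, $(i-1)_j$ corresponds to the cells $(i,\jhat)$, $(i,j)$, $(i-1,j)$, which are exactly the cells that can support an inversion triple of $F$. Using that $(i,j)$ precedes $(i,\jhat)$ and that $(i,\jhat)$ precedes $(i-1,j)$ in reading order, and splitting on whether column $j$ has a descent at row $i$ (equivalently, on whether the relevant arc wraps past $\star$), one finds that the contribution of $\jhat$ to $s_{i,j}$ equals $\mathbf 1\!\left[F_{(i-1,j)}<F_{(i,\jhat)}<F_{(i,j)}\right]$ when $(i,j)$ is a descent, equals $\mathbf 1\!\left[F_{(i,\jhat)}>F_{(i-1,j)}\right]+\mathbf 1\!\left[F_{(i,\jhat)}<F_{(i,j)}\right]$ otherwise, and equals $\mathbf 1\!\left[F_{(i,\jhat)}<F_{(i,j)}\right]$ in the degenerate case $i=1$. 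In every case this is precisely the indicator that the cells $(i,j),(i,\jhat),(i-1,j)$ form an inversion triple, so summing over $(i,j,\jhat)$ gives $\ski(C)=\inv(\mathfrak f(C))$, and the theorem follows.

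The bijection in the first step is routine and the cocharge/maj identity is a clean telescoping once the circular conventions are spelled out. The main obstacle is the $\ski/\inv$ comparison: one must be scrupulous about the orientation of arcs on the circloid, about arcs that pass through $\star$, and about the prismatic order that dictates the placement of letters inside a single sector, and then verify that the resulting three-way case split agrees, case by case, with the criterion defining an inversion triple — including its degenerate first-row form. Once this local dictionary is in place, equation~\eqref{rjformula} is immediate from Theorem~\ref{Hinm}.
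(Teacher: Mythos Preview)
Your proposal is correct and follows essentially the same approach as the paper: both deduce the result from Theorem~\ref{Hinm} via the bijection $\mathfrak f$, verify $\cocharge=\maj$ column-by-column (color-by-color), and match $\ski$ with $\inv$ by a local case analysis on the sectors containing $(i-1)_j$, $i_j$, $i_{\jhat}$. The only cosmetic differences are that the paper proves bijectivity by injectivity plus a cardinality count rather than exhibiting the inverse, and it organizes the $\ski/\inv$ cases by listing the four relations $x=y\neq z$, $x<z<y$, $y<x<z$, $z<y<x$ directly rather than splitting on whether $(i,j)$ is a descent; in your non-descent case you should remark that the two indicators are mutually exclusive (since $F_{(i,j)}\le F_{(i-1,j)}$), so their sum is indeed $0$ or $1$.
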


\begin{proof}
Consider any compositions $\gamma$ and $\beta\subseteq\alpha$ where $|\gamma|=|\alpha|-|\beta|$.
We first establish that  $\mathfrak f$ is a bijection where
\begin{equation}
\label{codomainf}
\mathfrak f : \C(\gamma,\alpha/\beta)\longrightarrow \F(\alpha/\beta,\gamma)\,.
\end{equation}
Given a circloid $C$, let $f=\mathfrak f(C)$.  
Each letter $r$ in sector $x$ of $C$ corresponds to an entry $x$ in row $r$ of $f$ 
implying that $C\in \C(\gamma,\alpha/\beta)$ if and only if $f\in \mathcal F(\alpha/\beta,\gamma)$.

Note similarly that any other circloid $D$ where $\mathfrak f(C)=\mathfrak f(D)$ 
lies in $\C(\gamma,\alpha/\beta)$.  Consider the set of colored letters
$\{r^{(1)}_{c^{(1)}},\ldots,r^{(\gamma_x)}_{c^{(\gamma_x)}}\}$ in sector $x$ of $C$.  
By definition of $\mathfrak f$, $f_{(r^{(i)},c^{(i)})}=x$ for $i=1,\ldots,\gamma_x$.
Therefore, $\{r^{(1)}_{c^{(1)}},\ldots,r^{(\gamma_x)}_{c^{(\gamma_x)}}\}$ is also
the set of colored letters in sector $x$ of $D$.  Since colored letters lie in 
unique decreasing prismatic order within sectors, every sector of
$C$ and $D$ is the same and we see that $C=D$.
That $\mathfrak f$ is bijective then follows by noting that
the number of fillings given in~\eqref{multi} matches the number of
circloids $C\in\C(\gamma,\cdot)$.  That is,
again viewing letters from sector $x$ of $C$ 
as a subset $\{r^{(1)}_{c^{(1)}},\ldots,r^{(\gamma_x)}_{c^{(\gamma_x)}}\}$ 
of the distinct $|\gamma|$ colored letters in $C$, we see that
\begin{equation}
|\C(\gamma,\alpha/\beta)|=
\binom{|\gamma|}{\gamma_1,\gamma_2,\ldots,\gamma_{\ell(\gamma)}}
\,.
\end{equation}

We next restrict our attention to circloid $C\in\C(\gamma,\lambda)$ for $\lambda\vdash n$
and claim that
\begin{equation}
\label{inv2skip}
\cocharge(C)=\maj(f) \qquad\text{and}\qquad \ski(C)=\inv(f)\,,
\end{equation}
for $f=\mathfrak f(C)$.
Since $\maj$ is computed on columns, we need only verify that $\cocharge(\Ccolor{i})$ 
equals the maj of column $i$ in $f$ to prove that $\maj(f)=\cocharge(C)$.
A slight reinterpretation of the cocharge definition gives $\cocharge(C_i)=\sum_r L_r$ 
where $L_r=\lambda_i'-r+1$ when $r_{i}$ occurs (clockwise) between $(r-1)_i$ 
and $\star$ in $C$ and otherwise $L_r=0$.
In fact, since $r_i>(r-1)_i$ and sectors are prismatic order decreasing, 
$L_r\neq 0$ if and only if $r_i$ lies in a sector $y$ strictly larger than the sector 
$x$ containing $(r-1)_i$.  
On the other hand, the action of $\mathfrak f$ dictates that
$r_i$ is in sector $y$ and $(r-1)_i$ is in sector $x$ of $C$ precisely when 
$y$ lies in cell $(r,i)$ above $x$ in  cell $(r-1,i)$ of $f$.

We next claim that $\ski(C)=\inv(f)$ using the observation that $\ski(C)=\sum_{i\geq 1}
\left(|I^i_1|+\cdots +|I^i_{\lambda_i'}|\right)$, where
$$
I^i_{j}=\{\,\jhat>j: i_{\jhat} \text{ lies between $(i-1)_{j}$ and $i_j$\}}\,,
$$
with the convention that $0_j=\star$ for all $j$.  
For any $j$, $1_j$ is in sector $x$ of $C$ if and only if entry $x$ lies in $(1,j)$ of $f$.
Note that $\jhat\in I_j^1$ implies $i_\jhat$ lies in sector $y<x$ since $i_j<i_{\jhat}$ and
therefore
$\jhat\in I_j^1$ corresponds uniquely to an inversion of $x$ with the entry 
$y$ in $(i,\jhat)$ of $f$.  When $i>1$,
for each pair of $(i-1)_j$ in sector $x$ and $i_j$ in sector $y$ of $C$, 
one of the following relations concerning the sector $z$ with $\jhat\in I_j^i$ 
must be true: $x=y$ and $z\neq x$, $x < z <y$, 
$y < x < z$, or $z < y <x$.  
Correspondingly, entries $x, y,$ and $z$ in cells $(i-1,j),(i,j),$ and $(i,\jhat)$ of $f$ respectively,
form a triple inversion.
\end{proof}

We can extend the definitions of cocharge and betrayal to colored tabloid,
\[ \cocharge(T)=\cocharge(\iota^{-1}T) \qquad\text{and}\qquad
\ski(T)=\ski(\iota^{-1}T). \]
Immediately following from Theorem~\ref{the:rjformula} is an expression using 
charge
and one using fixed weight colored tabloid.

\begin{cor}
\label{cor:tildeH}
For any partition $\mu$,
\[
\tilde H_\mu(X;q,1/t)t^{n(\mu)} = \sum_{C\in \C(\cdot,\mu)}
q^{\ski(C)}\, t^{\charge(C)}\, x^{\shape(C)}
=
\sum_{T\in\CT(\cdot,\mu)}
q^{\ski(T)}\,t^{\charge(T)}\,x^{\shape(T)}\;.\]
\end{cor}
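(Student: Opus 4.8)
The plan is to deduce both equalities directly from Theorem~\ref{the:rjformula} together with the bijection $\iota$, with no new combinatorics required. First I would record that for a circloid $C\in\C(\cdot,\mu)$ of partition weight $\mu$ its cocharge is, by definition, a sum of cocharges of standard color classes $\Ccolor{i}$, so that $0\le\cocharge(C)\le n(\mu)$; extending the word identity $\charge(w)=n(\mu)-\cocharge(w)$ to circloids amounts to setting $\charge(C)=n(\mu)-\cocharge(C)$, which is therefore a non-negative integer. Consequently, substituting $t\mapsto 1/t$ in the identity~\eqref{rjformula} of Theorem~\ref{the:rjformula} and clearing denominators by multiplying through by $t^{n(\mu)}$ produces a genuine polynomial identity:
\[
t^{n(\mu)}\,\tilde H_\mu(x;q,1/t)=\sum_{C\in\C(\cdot,\mu)}q^{\ski(C)}\,t^{\,n(\mu)-\cocharge(C)}\,x^{\shape(C)}=\sum_{C\in\C(\cdot,\mu)}q^{\ski(C)}\,t^{\charge(C)}\,x^{\shape(C)}\,.
\]
This yields the first claimed equality.

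For the second equality I would invoke the bijection $\iota:\C(\gamma,\alpha/\beta)\to\CT(\gamma,\alpha/\beta)$ between circloids and colored tabloids, specialized to $\alpha/\beta=\mu$ and taken as the disjoint union over all admissible shapes $\gamma$, so that $\iota:\C(\cdot,\mu)\to\CT(\cdot,\mu)$ is a bijection. By the very definitions placed just before the corollary, $\cocharge(T)=\cocharge(\iota^{-1}T)$ and $\ski(T)=\ski(\iota^{-1}T)$; moreover $\shape$ is manifestly preserved since $\iota$ carries circloids of shape $\gamma$ to tabloids of shape $\gamma$, hence so is $\charge=n(\mu)-\cocharge$. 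Re-indexing the sum over $C\in\C(\cdot,\mu)$ by $T=\iota(C)\in\CT(\cdot,\mu)$ therefore leaves each summand unchanged, giving the second equality.

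No serious obstacle is anticipated: the argument is entirely formal once Theorem~\ref{the:rjformula} is available. The only point meriting a line of care is the legitimacy of the substitution $t\mapsto 1/t$, that is, that $t^{n(\mu)}\,\tilde H_\mu(x;q,1/t)$ is polynomial in $t$; this is immediate from the bound $\cocharge(C)\le n(\mu)$ already visible on the right-hand side of~\eqref{rjformula}.
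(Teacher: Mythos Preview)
Your proposal is correct and matches the paper's approach exactly: the paper states that the corollary follows ``immediately'' from Theorem~\ref{the:rjformula} together with the definition $\charge=n(\mu)-\cocharge$ and the tabloid definitions $\cocharge(T)=\cocharge(\iota^{-1}T)$, $\ski(T)=\ski(\iota^{-1}T)$, and your argument is precisely the routine unpacking of that sentence. Your remark that $\cocharge(C)\le n(\mu)$ (ensuring $t^{n(\mu)}\tilde H_\mu(x;q,1/t)$ is polynomial in $t$) is a small bonus the paper does not bother to spell out.
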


\section{Colorful companions}

%Circloids, although arising to reflect Macdonald symmetric functions, 
%provide a framework which brings together combinatorial ideas ranging 
%from Schubert calculus to Kashiwara's theory of crystal bases.

The subset of Young tableaux with an additional {\it Yamanouchi} condition is of particular
importance; its cardinality gives tensor product multiplicities
of ${\rm GL}_n$, the Schur expansion coefficients in a product of Schur functions, 
and the Schubert structure constants in the cohomology 
of the Grassmannian ${\rm Gr}(k,n)$ of $k$-dimensional subspaces of $\mathbb C^n$.

For partition $\lambda$, $T_\lambda$ denotes the unique tableau of shape and 
weight $\lambda$.  A word $w$ is {\it $\lambda$-Yamanouchi} when 
$w\cdot \word(T_\lambda)= b_n\cdots b_2 b_1$ has the property that the weight 
of each suffix $b_j\cdots b_2b_1$ is a partition.  
A filling is {\it $\lambda$-Yamanouchi } when
its reading word is $\lambda$-Yamanouchi and a circloid is 
{\it $\lambda$-Yamanouchi} when the counter-clockwise reading of its 
letters is $\lambda$-Yamanouchi.  
A $\emptyset$-Yamanouchi object is simply called {\it Yamanouchi}.

\begin{remark}
\label{re:chargeyama}
Since a word of weight $\mu$ has zero charge only when
every standard subword is the maximal length permutation,
zero charge matches the Yamanouchi condition.
\end{remark}

Because many open problems in representation theory, geometry, and symmetric function theory
involve a search for contemporary notions of Yamanouchi and tableaux to characterize
mysterious invariants, the Yamanouchi condition has been revisited often from different viewpoints.  
The combinatorics of circloids naturally captures several of these simultaneously.

\subsection{Companions and the Yamanouchi condition}

Van Leeuwen addresses the classical Littlewood-Richardson rule by rephrasing the Yamanouchi 
condition on skew tableaux $P$ in terms of {\it companion tableaux}.  
A companion of $P$ is any skew tableau $Q$ such that the entries in row $x$ 
match the row positions of letters $x\in P$ and are aligned to meet the 
condition that entries increase up columns.
He proves that a Yamanouchi tableau $P$ always has a companion tableau of (straight) partition shape $\mu$.  

We forsake the column increasing condition and instead view a companion as 
the {tabloid} where rows are uniquely aligned into a straight shape.
Such a companion of semi-standard tableau $P$ is precisely the tabloid obtained by 
ignoring colors of $\iota\circ\mathfrak f^{-1}(P)$.  This approach opens the door
to a more inclusive study allowing for companions of arbitrary fillings.

\begin{definition}
\label{def:companion}
The {\it companion map} is the bijection,
$$
\mathfrak c=\iota\circ \mathfrak f^{-1} : 
\F(\nu/\lambda,\mu)\longrightarrow \CT(\mu,\nu/\lambda)\,.
$$
The companion of a filling $F\in \F(\nu/\lambda,\mu)$ is the unique colored tabloid $\mathfrak c(F)$.
\end{definition}

Following directly from the definition of $\mathfrak f$ 
and $\iota$, the action of $\mathfrak c$ on a filling $F$ takes entry $e$ in 
cell $(r,c)$ to the colored letter $r_c$ placed in row $e$ 
of $T$, arranged so that each row of $T$ is colored increasing.
Companions give a valuable mechanism to study Yamanouchi related problems.

\begin{definition}
A filling $F$ is super-Yamanouchi when 
the non-decreasing rearrangement of entries within each row 
is a Yamanouchi tabloid.
\end{definition}

\begin{prop}
\label{prop:yamacs}
Given partitions $\mu$ and $\nu/\lambda$, consider a filling
$F\in \F(\nu/\lambda,\mu)$ and its companion 
$T=\mathfrak c(F)$.
\begin{enumerate}
\item $F$ is Yamanouchi if and only if entries of $T$ are prismatic increasing in columns,
\item $F$ is a super-Yamanouchi filling if and only if letters of $T$ 
increase in columns,
\item
letters of $F$ increase in columns if and only if $T$ is $\lambda$-Yamanouchi.
\end{enumerate}
\end{prop}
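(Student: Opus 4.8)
The plan is to translate each of the three conditions on the filling $F$ through the companion map $\mathfrak c = \iota\circ\mathfrak f^{-1}$ into a condition on the circloid $C = \mathfrak f^{-1}(F)$, and then into the tabloid $T = \iota(C)$, exploiting the fact that the column structure of $T$ is governed entirely by the prismatic order on colored letters, which in turn records the row-and-column positions in $F$ of the entries of each fixed value. The key bookkeeping observation, already implicit in Definition~\ref{fillingsmap} and the discussion of $\mathfrak c$, is this: entry $e$ appearing in cell $(r,c)$ of $F$ becomes the colored letter $r_c$ in row $e$ of $T$; so the colored letters in row $e$ of $T$ are exactly $\{r_c : F_{(r,c)}=e\}$, and a column of $T$ collects these letters across varying $e$, arranged so each individual row is prismatic-increasing. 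Thus a statement about columns of $T$ is a statement about how the positions of the $e$'s in $F$ compare, under prismatic order, to the positions of the $(e{+}1)$'s, and so forth.

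First I would record the dictionary between the Yamanouchi condition on the reading word of $F$ and the partition-shape condition. Recall $w = \word(F)\cdot\word(T_\lambda)$ is $\lambda$-Yamanouchi when every suffix has partition weight; equivalently, reading $w$ from the right, after each letter the number of $e$'s seen is at least the number of $(e{+}1)$'s seen, for all $e$. Reading order on cells of $F$ is precisely the prismatic order on the associated colored letters $r_c$, so this running inequality says: for every "prefix" $P$ of colored letters in prismatic-decreasing order (equivalently, for every initial segment in reading order), $|\{r_c\in P : F_{(r,c)}=e\}| \ge |\{r_c\in P: F_{(r,c)}=e{+}1\}|$. Now here is the translation I expect to carry the proof: $F$ is Yamanouchi $\iff$ for each $e$ and each colored letter $(e{+}1)_{\text{row},\text{col}}$ appearing in row $e{+}1$ of $T$, there is a \emph{distinct, larger} colored letter in row $e$ of $T$ — which, because rows of $T$ are prismatic-increasing and columns are filled greedily, is exactly the statement that the entries of $T$ increase weakly down... more precisely, that each letter in row $e{+}1$ of $T$ is dominated (prismatically) by the letter directly above it, i.e. columns of $T$ are prismatic-\emph{increasing} as one reads from the bottom row up toward row $1$. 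This handles part (1). For part (2), one replaces ``Yamanouchi'' with ``super-Yamanouchi'': sorting each row of $F$ into non-decreasing order before forming the companion amounts to passing from $T$ to a tabloid in which each row has been re-sorted, and the super-Yamanouchi condition by definition says that re-sorted tabloid is Yamanouchi; but the point is that re-sorting rows of $F$ permutes \emph{within} each color class the column-coordinates assigned to the letters of a given row of $T$, and the net effect of demanding the sorted version be Yamanouchi is exactly that the \emph{letters} of $T$ (not merely their color-decorated versions) strictly increase up columns. I would make this precise by checking that ``letters strictly increase in columns of $T$'' is invariant under, and characterized by, the sorted-row Yamanouchi condition, using part (1) applied to the sorted filling together with the observation that after sorting, the colors within a row of $T$ are forced to be $1,2,3,\dots$ reading left to right, so prismatic comparison between adjacent rows reduces to comparison of the underlying letters.

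For part (3), I would argue by a near-symmetric maneuver: ``letters of $F$ increase in columns'' means $F$ is a (semistandard-type) filling whose columns are strictly increasing, and I claim this is equivalent to $\mathfrak c(F)=T$ being $\lambda$-Yamanouchi, where $\lambda$ is the inner shape. The mechanism: columns of $F$ being increasing constrains, for each value $e$, that the cells holding $e$ and the cells holding $e{+}1$ never stack with the larger value on top; translating cell positions to colored letters and then to the rows of $T$, this becomes a running-count inequality on the colored letters of $T$ read in prismatic order — but now with the roles of ``row index in $T$'' and ``value'' swapped relative to part (1), which is exactly why one lands on a Yamanouchi condition on the reading word of $T$ rather than a column condition; the appearance of $\lambda$ (rather than $\emptyset$) comes from the inner corner cells, which in the reading word of $T$ play the role of the padding word $\word(T_\lambda)$. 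I would verify the bookkeeping by tracking a single inner cell through $\mathfrak f^{-1}$ and $\iota$.

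The main obstacle I anticipate is not any single deep idea but keeping the three translations straight, since each swaps the roles of (value in $F$) $\leftrightarrow$ (row of $T$) $\leftrightarrow$ (sector of $C$) differently, and prismatic order is the subtle ingredient: $u_v > r_c$ when $r<u$ or ($r=u$ and $c>v$), so comparisons between colored letters in different rows of $T$ are dominated by the \emph{row} coordinate, while comparisons within a row are by \emph{column} coordinate reversed. I would therefore organize the proof around a single lemma stating precisely how a running weak-majorization condition on a prismatic-ordered sequence of colored letters corresponds, via $\iota^{-1}\circ\mathfrak c = \mathfrak f^{-1}$, to a cell-adjacency (column) condition on $F$, and then obtain all three parts as instances of that lemma with the three coordinates permuted, together with Remark~\ref{re:chargeyama} to tie Yamanouchi to zero charge where convenient.
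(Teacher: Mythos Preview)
Your overall plan---translate each condition through $\mathfrak c$ via the dictionary ``entry $e$ in cell $(r,c)$ of $F$ $\leftrightarrow$ colored letter $r_c$ in row $e$ of $T$''---is exactly the paper's approach, and the pairing argument you sketch for part~(1) is the paper's. But several concrete claims are wrong and would derail the write-up. In~(1) your orientation is scrambled: row $e{+}1$ of $T$ lies \emph{above} row $e$, and row~$1$ \emph{is} the bottom row, so ``from the bottom row up toward row~$1$'' is incoherent; also the colored letters in row $e{+}1$ of $T$ are not $(e{+}1)_{\bullet}$ but rather $r_c$ with $F_{(r,c)}=e{+}1$. The correct statement is that each $r_c$ in row $x$ pairs with a prismatically \emph{smaller} entry in row $x{-}1$, hence the entry immediately below is smaller.

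In~(2), applying part~(1) to the row-sorted filling $F'$ is a valid strategy and is essentially what the paper does, but your key step is false: the colors within a row of $T'=\mathfrak c(F')$ are \emph{not} forced to be $1,2,3,\ldots$ left to right (they depend on how many smaller entries precede the block of $e$'s in each row of $F'$). What actually makes the reduction from prismatic-increasing to letter-increasing work is that $T$ and $T'$ carry the same letters in each column position, together with the reverse-coloring of $T'$ (Proposition~\ref{prop:revcs}); the paper instead argues both directions by a direct case analysis on a rightmost offending column. For~(3), ``near-symmetric maneuver'' is not a proof. The paper makes your role-swap intuition precise by extending $F$ to $\hat F\in\F(\nu,(1^{|\lambda|},\mu))$, filling the inner cells of $\lambda$ with $1,\ldots,|\lambda|$ so their reading word is $|\lambda|\cdots 2\,1$; then $F_{(r,c)}>F_{(r-1,c)}$ becomes ``$r_c$ lies in a later sector than $(r{-}1)_c$ in $\hat C$,'' which is the Yamanouchi pairing for letters of $\hat C$, and the first $|\lambda|$ sectors supply exactly $\word(T_\lambda)$. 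You need this (or an equivalent explicit device) to handle the $\lambda$-padding.
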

\begin{proof}
(1) $F$ is Yamanouchi if and only if the letter $x$ in a cell $(r,c)$ of 
$F$ can be paired uniquely with an $x-1$ in some cell $(\hat r,\hat c)$ occurring 
after $(r,c)$ in reading order.  Equivalently, each entry 
$r_c$ in row $x$ of $T$ pairs uniquely with an entry 
$\hat r_{\hat c}\leq r_c$ in row $x-1$ (in prismatic order).
Since row entries in a colored tabloid lie in increasing prismatic order, 
such a pairing can occur exactly when the entry immediately below $r_c$ 
is smaller in prismatic order.

(2) 
Consider a colored tabloid $T$ where letters do not strictly increase up some column.
If columns of $T$ are not prismatic increasing, $F$ is not Yamanouchi by (1).
Otherwise, we can choose $b$ to be the rightmost column of $T$ with an $r_c$ in 
row $x$ and an $r_{\hat c}$ in row $x-1$ where $c<\hat c$. 
Correspondingly, $F$ has an $x$ in cell $(r,c)$ and an $x-1$ in $(r,\hat c)$.
Since entries in rows of $T$ are prismatic non-decreasing and $r_c$ and $r_{\hat c}$ 
lie in column $b$, the  subset of cells in $F$ weakly smaller than $(r,\hat c)$ 
in reading order contain $b$ $x-1$'s and $b$ $x$'s.  However, the filling $\hat F$
obtained by rearranging letters in row $r$ of $F$ into weakly increasing order
is not Yamanouchi since the $x$ in column $c<\hat c$ of $F$ moves to 
the east of all $x-1$'s in that row.

On the other hand, a colored tabloid $T$ with letters increasing up columns has
prismatic increasing columns and therefore $F$ is Yamanouchi by (1).  
Suppose
that $F$ has an $x$ and an $x-1$ in cells $(r,c)$ and $(r,\hat c)$, respectively, 
such that when letters in row $r$ are put into weakly increasing order, the resulting
filling is not Yamanouchi.
Since $F$ is Yamanouchi, this can only happen if $\hat c>c$ and there is an equal number of $x-1$'s and $x$'s 
in the subset of cells of $F$ occurring weakly after $(r,\hat c)$ in the reading order of cells.
However, under the $\mathfrak f$-correspondence, $T$ has an $r_c$ in row $x$ and an $r_{\hat c}$ in row $x-1$ lying
in the same column.  The violation of increasing columns establishes the claim.

(3) Given $F\in \F(\nu/\lambda,\mu)$ is column increasing, 
construct the unique filling $\hat F\in\F(\nu,(1^{|\lambda|},\mu))$ by
replacing each $i\in F$ with $i+|\lambda|$ and putting $1,2,\ldots,|\lambda|$ 
into cells of $\lambda$ so the reading word taken from these cells is
$|\lambda|\cdots 21$.
Note that $\hat C=\mathfrak f^{-1}(\hat F)$ differs from
$\mathfrak f^{-1}(F)$ by the deletion of the first $|\lambda|$ sectors.

A letter in an arbitrary cell $(r,c)$  of $\hat F$
is larger than the letter in cell $(r-1,c)$ if and only if entry $r_c$ occurs in a later sector 
then $r-1_c$ of $\hat C=\mathfrak f^{-1}(\hat F)$.
This is equivalent to the Yamanouchi condition
on $\hat C$; $r$ can be paired with a letter $r-1$ which occurs 
earlier than it, for each letter $r$ in $\hat C$.
The claim follows by noting that the counter-clockwise reading of letters from
the first $|\lambda|$ sectors of $\hat C$ is $\word(T_\lambda)$.  
\end{proof}

\subsection{Reverse companions}

The initial study of companions involved only the subset of fillings which are semi-standard
tableaux.
Proposition~\ref{prop:yamacs} pinpoints that dropping the row condition and requiring only that letters 
increase in columns of a filling imposes the $\lambda$-Yamanouchi condition on its companion circloid
(or colored tabloid).  On the other hand, it is also natural to examine the subset of
fillings which are {\it tabloids}, that is, fillings which are
non-decreasing in rows from west to east.  Let $\T(\alpha,\beta)$ be the set of tabloids of 
shape $\alpha$ and weight $\beta$.

A distinguished coloring on circloids comes to light under these conditions.  A circloid is
{\it reverse colored} when the colors adorning letter $x$ 
increase clockwise from $\star$, for each fixed letter $x$. 
A tabloid $T$ is reverse colored if $\iota^{-1}(T)$ is a reverse colored circloid.

\begin{remark}
\label{re:manifest}
Since the reverse coloring uniquely assigns a color to each letter of a tabloid,
reverse colored circloids are a manifestation of tabloids.
\end{remark}

\begin{prop}
\label{prop:revcs}
Given compositions $\gamma$ and $\beta\subseteq\alpha$, the companion
$T$ of a filling $F\in \F(\alpha/\beta,\gamma)$
is reverse colored if and only if $F$ is a tabloid.
\end{prop}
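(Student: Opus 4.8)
The plan is to unwind the definition of the companion map $\mathfrak c = \iota\circ\mathfrak f^{-1}$ and track precisely what the tabloid condition on $F$ translates to under $\mathfrak f^{-1}$, then under $\iota$. Recall that $\mathfrak f$ sends a circloid $C$ of shape $\gamma$ to the filling placing entry $x$ in cell $(r,c)$ for each colored letter $r_c$ in sector $x$. So if $F = \mathfrak f(C)$, then $F$ being a tabloid — nondecreasing along rows from west to east — means that for each row $r$, as we scan the entries $F_{(r,1)}, F_{(r,2)}, \ldots$ the values (which are sector indices) do not decrease. Equivalently, among all colored letters $r_c$ appearing in $C$ (with fixed letter-value $r$), the one in column $c$ lies in sector $x$, the one in column $c+1$ lies in sector $x' \ge x$, and so on. In other words, for each fixed $r$, the colored letters $r_1, r_2, r_3, \ldots$ (ordered by increasing column, which by prismatic order means $r_1 > r_2 > r_3 > \cdots$) appear in weakly increasing sectors as the column index grows.

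Next I would translate "weakly increasing sectors as column grows" into a statement about the clockwise position within the circloid. Sectors are read clockwise from $\star$, and within each sector the colored letters lie in decreasing prismatic order. So "$r_c$ in sector $x$ and $r_{c+1}$ in sector $x' \ge x$" is equivalent to saying $r_{c+1}$ appears clockwise-after $\star$ at least as far as $r_c$ does — but one has to be careful about the within-sector case ($x' = x$): there, since $r_c > r_{c+1}$ in prismatic order and a sector is prismatic-decreasing, $r_c$ comes clockwise-before $r_{c+1}$ in that sector. Combining the across-sector and within-sector cases, the tabloid condition on $F$ says exactly: \emph{for each fixed letter $r$, the colors $c$ attached to the $r$'s occur so that larger column index $\iff$ later clockwise position from $\star$.} But column index for $r_c$ is just the color $c$; hence this says the colors adorning letter $r$ increase clockwise from $\star$. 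That is precisely the definition of $C = \mathfrak f^{-1}(F)$ being reverse colored.

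Finally I would invoke $\iota$: by Remark~\ref{re:manifest} (and the definition preceding it), a tabloid $T$ is declared reverse colored exactly when $\iota^{-1}(T)$ is a reverse colored circloid, and $\iota^{-1}(T) = \mathfrak f^{-1}(F)$ since $T = \mathfrak c(F) = \iota(\mathfrak f^{-1}(F))$. So the chain of equivalences closes: $F$ is a tabloid $\iff$ $\mathfrak f^{-1}(F)$ is reverse colored $\iff$ $\iota(\mathfrak f^{-1}(F)) = T$ is reverse colored. I should also note at the outset that $\mathfrak c$ is a genuine bijection $\F(\alpha/\beta,\gamma) \to \CT(\gamma,\alpha/\beta)$ — this is Definition~\ref{def:companion} together with the bijectivity of $\mathfrak f$ established in the proof of Theorem~\ref{the:rjformula} and of $\iota$ — so the statement is about matching subsets under this bijection and there are no edge cases lost.

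The main obstacle I anticipate is the bookkeeping in the within-sector case: making sure that "two $r$'s land in the same sector $x$" is handled correctly, since here the relevant comparison is prismatic order $r_c > r_{c+1}$ inside a prismatic-decreasing sector, giving the clockwise ordering $r_c$ before $r_{c+1}$ — consistent with "smaller color, earlier clockwise" only because the sector is read clockwise and ordered by \emph{decreasing} prismatic value. One must check this orientation convention lines up (it does, by the very definition of a circloid), but it is the one spot where a sign/direction slip would break the argument. Everything else is a direct, if slightly fussy, unraveling of definitions.
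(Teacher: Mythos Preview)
Your proposal is correct and follows essentially the same approach as the paper's proof, which is a one-line observation that $F_{(r,c)}\geq F_{(r,c-1)}$ if and only if $r_c$ does not occur clockwise-before $r_{c-1}$ in $\mathfrak f^{-1}(F)$. Your version is simply more explicit, in particular spelling out the within-sector case that the paper's terse statement handles implicitly via the prismatic-decreasing convention on sectors.
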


\begin{proof}
By definition of $\mathfrak f$,
a filling $F$ has the property that the letter in cell $(r,c)$
is not smaller than the letter in $(r,c-1)$
if and only if $r_c$ does not occur before
$r_{c-1}$ in $\mathfrak f^{-1}(F)$.
\end{proof}

\begin{example}
\[
{ \tableau[scY]{1,1|1,2|1,2,4|2,3,3,4,5} }
\qquad
\longrightarrow
\qquad
\airport{4_1,4_2,3_1,2_1,3_2,2_2,1_1,1_2,1_3,2_3,1_4,1_5}{4,3,2,2,1}{2.2cm}
\quad
\equiv
\quad
{ \tableau[scY]{1_5|1_4,2_3|1_3,1_2|1_1,2_2,3_2|2_1,3_1,4_2,4_1} }
\quad \quad
\]
\end{example}

In particular, the companion $Q$ of a semi-standard tableau filling
$P$ is a reverse colored $\lambda$-Yamanouchi circloid, or equivalently,
is a manifestation of a $\lambda$-Yamanouchi  tabloid
by Remark~\ref{re:manifest}.
Furthermore, when $F$ is both Yamanouchi and a semi-standard tableau, 
we recover the classical result that Yamanouchi tableaux of shape $\nu/\lambda$ 
and weight $\mu$ and $\lambda$-Yamanouchi tableaux 
of shape $\mu$ and weight $\nu-\lambda$ are equinumerous.

\begin{corollary}
\label{cor:revcompanion}
For partitions $\nu/\lambda$ and $\mu$, $P\in\SSYT(\nu/\lambda,\mu)$
if and only if its companion is a $\lambda$-Yamanouchi tabloid, and
$P$ is Yamanouchi if and only if its companion is a 
$\lambda$-Yamanouchi tableau  $Q\in\SSYT(\mu,\nu-\lambda)$.
\end{corollary}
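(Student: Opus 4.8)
The plan is to read Corollary~\ref{cor:revcompanion} off from Propositions~\ref{prop:yamacs} and~\ref{prop:revcs} by exploiting the obvious factorization of the semi-standard condition: a filling $F\in\F(\nu/\lambda,\mu)$ lies in $\SSYT(\nu/\lambda,\mu)$ if and only if it is a tabloid (rows weakly increasing from west to east) \emph{and} its columns strictly increase. So I would first record this dichotomy and then feed each half into one of the two propositions.

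For the first assertion, apply Proposition~\ref{prop:revcs} to the tabloid condition and Proposition~\ref{prop:yamacs}(3) to the column condition: the former says the companion $T=\mathfrak c(F)$ is reverse colored exactly when $F$ is a tabloid, the latter says $T$ is $\lambda$-Yamanouchi exactly when $F$ is column increasing. Thus $F\in\SSYT(\nu/\lambda,\mu)$ precisely when $T$ is a reverse colored $\lambda$-Yamanouchi colored tabloid, and by Remark~\ref{re:manifest} these are exactly the $\lambda$-Yamanouchi tabloids (their reverse coloring being forced). Here one notes that the $\lambda$-Yamanouchi condition sees only uncolored letters (through the reading word and $\word(T_\lambda)$) and that the reverse coloring is uniquely determined by the underlying uncolored tabloid, so passing between the colored and uncolored pictures loses nothing.

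For the second assertion I would restrict to $P\in\SSYT(\nu/\lambda,\mu)$ and observe that, since the rows of $P$ are already non-decreasing, the row-rearrangement in the definition of super-Yamanouchi fillings acts trivially; hence $P$ is Yamanouchi if and only if $P$ is super-Yamanouchi. Proposition~\ref{prop:yamacs}(2) then converts this into the statement that the letters of $T$ strictly increase up columns. Combining with the first assertion, $T$ is a reverse colored $\lambda$-Yamanouchi colored tabloid of shape $\mu$ with strictly increasing columns; deleting colors gives a genuine column-strict (hence semi-standard) tableau $Q$ of shape $\mu$, still $\lambda$-Yamanouchi. Its weight comes from the reverse coloring: $T\in\CT(\mu,\nu/\lambda)$ reverse colored means letter $x$ carries the colors $\{\lambda_x+1,\dots,\nu_x\}$, so it occurs $\nu_x-\lambda_x$ times, giving $Q\in\SSYT(\mu,\nu-\lambda)$. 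The reverse implications retrace the same equivalences.

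The only delicate point --- the ``main obstacle'' in what is otherwise a chain of citations --- is the colored/uncolored bookkeeping in the second part: one must verify that under the hypothesis $P\in\SSYT(\nu/\lambda,\mu)$ plain Yamanouchi really coincides with super-Yamanouchi (so that part~(2) rather than part~(1) of Proposition~\ref{prop:yamacs} is the operative statement), and that erasing colors from the reverse colored companion produces an honest semi-standard tableau of shape $\mu$ and weight $\nu-\lambda$, rather than merely something increasing in the prismatic order. With these in hand the corollary follows from Propositions~\ref{prop:yamacs} and~\ref{prop:revcs} together with Remark~\ref{re:manifest}.
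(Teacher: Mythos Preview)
Your proposal is correct and follows essentially the same route the paper takes: the corollary is stated without a formal proof, with the preceding paragraph indicating it follows from combining Proposition~\ref{prop:revcs} (tabloid $\Leftrightarrow$ reverse colored companion), Proposition~\ref{prop:yamacs}(3) (column-increasing $\Leftrightarrow$ $\lambda$-Yamanouchi companion), and Remark~\ref{re:manifest}. Your one substantive addition is the observation that for $P\in\SSYT(\nu/\lambda,\mu)$ Yamanouchi and super-Yamanouchi coincide, so that Proposition~\ref{prop:yamacs}(2) yields letter-increasing columns in the companion; the paper's later argument in Proposition~\ref{prop:hwtss} suggests the alternative of using Proposition~\ref{prop:yamacs}(1) together with the reverse coloring to force letters (not just prismatic entries) to increase, but your route is equally valid and arguably more direct.
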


In the combinatorial theory of $K$-theoretic Schubert calculus, 
tableaux are replaced by more intricate combinatorial objects such as
reverse plane partitions, set-valued tableaux, and genomic tableaux.
The later were introduced recently by Pechunik and Yong~\cite{PY}
to solve a difficult problem concerning equivariant $K$-theory of the Grassmannian. 
We have discovered that reverse-colored companions are closely related 
to genomic tableaux and carry out the details separately.
A glimpse of this application is given in \S~\ref{sec:ktheory} where
a crystal structure on reverse colored circloids is used to 
study the representatives for $K$-homology classes of the Grassmannian.

\subsection{Faithful companions}
\label{sec:faithful}

Another useful manifestation of tabloids arises from a second distinguished circloid coloring.
A circloid $C$ is {\it faithfully colored} when, for each $i\geq 1$,
if entries of color $j<i$ are ignored,
the closest $1$ to $\star$ (moving clockwise) 
has color $i=1$ and the closest $x+1$ to $x_i$ has color $i$, for $x\geq 1$.  
A colored tabloid is defined to be faithfully colored if it is the $\iota$-image of
a faithfully  colored circloid.

\begin{example}
A faithfully colored circloid and its corresponding (faithfully colored) tabloid:
\[ C= \airport{
3_3,2_2,1_1,2_1,2_3,1_2,3_1,1_3,3_2}{3,3,2,1}{2.2cm}
\qquad
\iota(C) = \tableau[scY]{ 3_2 | 1_3,3_1 | 1_2,2_3,2_1 | 1_1,2_2,3_3 }\;. \]
\end{example}

\begin{prop}
\label{prop:charge2maj}
For composition $\alpha$ and partition $\lambda$ where $|\alpha|=|\lambda|$,
the image of the companion map $\mathfrak c$ on 
$$
\mathcal F^*(\lambda,\alpha)=
\{F\in \F(\lambda,\alpha): \inv(F)=0\}
$$ is
the subset of faithfully colored tabloid in $\CT(\alpha,\lambda)$.
\end{prop}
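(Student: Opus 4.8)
The plan is to translate the definition of a faithfully colored circloid directly through the $\mathfrak f$-correspondence into a statement about fillings, and then recognize that statement as precisely the condition $\inv(F)=0$. First I would recall from Theorem~\ref{the:rjformula} and its proof that $\mathfrak f : \C(\lambda,\alpha)\to\F(\lambda,\alpha)$ is a bijection sending a colored letter $r_c$ in sector $x$ of the circloid $C$ to the entry $x$ in cell $(r,c)$ of the filling $f=\mathfrak f(C)$; since $\mathfrak c=\iota\circ\mathfrak f^{-1}$ and $\iota$ merely rearranges sectors into rows without changing the coloring, a colored tabloid $T\in\CT(\alpha,\lambda)$ is faithfully colored exactly when $\mathfrak f^{-1}\circ\iota^{-1}(T)=\mathfrak f^{-1}(C)$ is a faithfully colored circloid, where $C=\iota^{-1}(T)$. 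So it suffices to show: for $C\in\C(\lambda,\alpha)$ and $f=\mathfrak f(C)$, the circloid $C$ is faithfully colored if and only if $\inv(f)=0$.

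The key step is to unwind what "faithfully colored" says sector-by-sector. Fix a color $i$. Ignoring all colors $j<i$ means restricting attention to letters whose color is $\geq i$; reading clockwise from $\star$, the first $1$ among these must have color exactly $i$, and the first $x+1$ after the (unique, once we've removed smaller colors) occurrence of $x_i$ must have color $i$. I would argue that this is equivalent to saying that among all cells of $f$ containing the smallest entries not yet "used" by colors $<i$, the color-$i$ letters occupy exactly the cells that come first in reading order within each row-block. Concretely, passing through sectors $1,2,\ldots,\ell(\lambda)$ in order corresponds to reading the columns... no — I would instead observe, following the pattern of the proof of Theorem~\ref{the:rjformula}, that "$r_i$ lies in an earlier sector than it would under an alternative coloring" is exactly what creates an inversion triple of the shape $r\ \ldots\ t$ over $s$ in $f$. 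The faithful coloring is the greedy choice that always takes the clockwise-closest candidate, and betrayal $\ski(C)$ counts exactly how many candidates get passed over; by the identity $\ski(C)=\inv(f)$ from~\eqref{inv2skip}, a coloring passes over no candidates (is faithful) precisely when $\inv(f)=0$.

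So the cleanest route is: (i) recall $\ski(C)=\inv(f)$ for $f=\mathfrak f(C)$ from the already-proved Theorem~\ref{the:rjformula}; (ii) show that a circloid of partition weight $\lambda$ has $\ski(C)=0$ if and only if it is faithfully colored, by checking that the faithful-coloring recipe is exactly the unique coloring in which, for every $i$ and $j$, no $i_{\jhat}$ with $\jhat>j$ lies between $(i-1)_j$ and $i_j$ — i.e. every $s_{i,j}=0$; (iii) combine (i) and (ii) and push through $\mathfrak c=\iota\circ\mathfrak f^{-1}$ to conclude that $\mathfrak c$ carries $\mathcal F^*(\lambda,\alpha)=\{F:\inv(F)=0\}$ bijectively onto the faithfully colored tabloids in $\CT(\alpha,\lambda)$. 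Step (ii) is where I expect the real work: one must verify that the greedy "closest candidate" description in the definition of faithfully colored coincides term-by-term with the vanishing of each summand $s_{i,j}$ in $\ski(C)$, including correctly handling the convention $0_j=\star$ and the fact that weight $\lambda$ is a partition so the colors available for letter $x$ form the initial segment $\{1,\ldots,\lambda_x'\}$. The main obstacle is bookkeeping: making sure "ignore colors $j<i$ and take the first $1$, then the first $2$, etc." is literally the statement that the color-$i$ letters, in the order $1_i,2_i,\ldots$, are each the prismatically-largest uncolored candidate remaining clockwise after the previous one — equivalently that no larger-colored copy of the same letter sneaks in between $(i-1)_j$ and $i_j$ — so that the set of faithful colorings is exactly the zero-set of $\ski$.
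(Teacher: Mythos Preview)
Your approach is exactly the paper's: invoke $\ski(C)=\inv(\mathfrak f(C))$ from~\eqref{inv2skip}, then identify $\{\ski=0\}$ with the faithfully colored circloids, and push through $\iota$. The only difference is calibration: you flag step~(ii) as ``the real work,'' but in fact it is immediate once the indices are aligned --- the faithful-coloring clause ``ignoring colors $<i$, the closest $x{+}1$ clockwise from $x_i$ has color $i$'' is literally the statement $s_{x+1,i}=0$ after renaming $(x{+}1,i)\mapsto(i,j)$, and the paper accordingly dispatches it with the phrase ``by definition.''
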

\begin{proof}
The number of inversion triples in a filling $F$ matches the betrayal of $\mathfrak f(F)$ by~\eqref{inv2skip}.
It thus suffices to note, by definition, that restricting the set of circloids to those with zero betrayal gives the subset of
faithfully colored elements.
\end{proof}

%\begin{corollary} \label{HLfixedweight} For any partition $\mu$, 
%\[ \tilde H_\mu(\textbf{x};0,t)=\sum_{T\in\T(\cdot,\mu)}t^{\cocharge(T)}\textbf{x}^{\shape(T)}\;.\]
%\end{corollary}

\begin{theorem}
\label{the:Hsuper}
For any partition $\mu$,
$$
\tilde H_\mu(x;0,t)=
\sum_{
F\in \F^*(\mu,\cdot)\atop
F~super-Yamanouchi} t^{\maj(F)}\,s_{\weight(F)} \,.
$$
\end{theorem}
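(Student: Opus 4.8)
The plan is to reduce this statement to Roberts' formula~\eqref{roberts} combined with Theorem~\ref{Hinm} specialized at $q=0$, using the companion machinery developed in Section~4. First I would recall that setting $q=0$ in the Haglund--Haiman--Loehr formula~\eqref{jimformulabody} kills every term with $\inv(F)>0$, so
\[
\tilde H_\mu(x;0,t)=\sum_{F\in\F^*(\mu,\cdot)}t^{\maj(F)}\,x^{\weight(F)},
\]
where $\F^*(\mu,\cdot)=\bigcup_\alpha\F^*(\mu,\alpha)$ is exactly the inversion-free set from Proposition~\ref{prop:charge2maj}. The goal is then to reorganize this monomial expansion into a Schur expansion. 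Roberts~\eqref{roberts} tells us the reorganization is governed by a super-Yamanouchi–type condition on the subset $\mathcal U$; the task is to show that, within $\F^*(\mu,\cdot)$, the relevant distinguished set is precisely the super-Yamanouchi fillings and that these index the Schur functions correctly.

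The key steps, in order, are: (i) transport the monomial expansion of $\tilde H_\mu(x;0,t)$ through the companion bijection $\mathfrak c:\F^*(\mu,\alpha)\to\{$faithfully colored tabloids in $\CT(\alpha,\mu)\}$ of Proposition~\ref{prop:charge2maj}, using $\maj(F)=\cocharge(\mathfrak c(F))$ from~\eqref{inv2skip} (recall $\mathfrak c=\iota\circ\mathfrak f^{-1}$ and $\maj=\cocharge$, $\inv=\ski$); this already recovers Corollary~\ref{cor:tildeH} at $q=0$ restricted to faithfully colored tabloids, i.e. $\tilde H_\mu(x;0,t)=\sum t^{\cocharge(T)}x^{\shape(T)}$ over faithfully colored $T$. (ii) Invoke the standard fact — or reprove it via a crystal/Bender--Knuth style argument on faithfully colored tabloids — that a cocharge-graded monomial generating function over such a self-contained family expands in Schur functions with the highest-weight (Yamanouchi) elements indexing the summands; concretely, $\sum_T t^{\cocharge(T)}x^{\shape(T)}=\sum_{T\text{ Yamanouchi}}t^{\cocharge(T)}s_{\weight(T)}$, the sum over Yamanouchi faithfully colored tabloids of content $\mu$. (iii) Pull this Yamanouchi condition back along $\mathfrak c^{-1}$: by Proposition~\ref{prop:yamacs}(2), a tabloid $T=\mathfrak c(F)$ has letters increasing in columns iff $F$ is super-Yamanouchi — but here I actually want the weaker statement that $T$ is Yamanouchi (columns prismatic increasing, Proposition~\ref{prop:yamacs}(1)) corresponding to $F$ Yamanouchi; the correct matching for faithfully colored tabloids is the super-Yamanouchi one, which is why the theorem as stated uses super-Yamanouchi rather than plain Yamanouchi. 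So the precise claim to nail down is: the Yamanouchi faithfully colored tabloids are exactly the $\mathfrak c$-images of the super-Yamanouchi fillings in $\F^*(\mu,\cdot)$, and under this correspondence $\weight(F)=\shape(T)$ reads off the indexing partition. Combining (i)–(iii) gives the displayed formula.

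The main obstacle is step (ii) together with the compatibility check in (iii): one must verify that the "faithfully colored" restriction is exactly compatible with the Schur-positivity reorganization, i.e. that raising/lowering operators (or jeu-de-taquin / crystal operators) preserve faithful coloring within a fixed cocharge class, so that each crystal component contains a unique faithfully colored Yamanouchi representative. Equivalently, one needs that $\mathfrak c$ intertwines the combinatorics of $\mathcal U$ (Roberts' mysterious set) with faithfully colored tabloids — this is presumably where the identification $\mathcal U\leftrightarrow\F^*(\mu,\cdot)$ and the super-Yamanouchi description both come from. I expect the cleanest route is to cite Roberts~\eqref{roberts} directly: his $\mathcal U$ is characterized by $\inv=0$ plus an auxiliary condition, and Proposition~\ref{prop:charge2maj} plus Proposition~\ref{prop:yamacs}(2) translate "$\inv=0$ and super-Yamanouchi" into "faithfully colored and Yamanouchi", whereupon $\weight(F)=\shape(\mathfrak c(F))$ and $\maj(F)=\cocharge(\mathfrak c(F))$ finish it. If instead one wants a self-contained proof, the hard part is establishing the Schur expansion in step (ii) from scratch, for which I would set up an explicit sign-reversing involution (a colored analogue of the Bender--Knuth / Lascoux--Schützenberger argument) on non-Yamanouchi faithfully colored tabloids that preserves both $\cocharge$ and membership in the faithfully colored family.
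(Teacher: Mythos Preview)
Your plan has the right ingredients (the companion map, Propositions~\ref{prop:yamacs} and~\ref{prop:charge2maj}, and~\eqref{inv2skip}) but assembles them in a considerably more roundabout way than the paper, and your step~(ii) is where you miss the shortcut. The paper neither invokes Roberts' formula~\eqref{roberts} nor any crystal/Bender--Knuth mechanism; in fact the logical flow is the reverse of what you propose --- the theorem is proved first and then used (via the subsequent lemma equating super-Yamanouchi with jamless Yamanouchi) to \emph{explain} Roberts' set $\mathcal U$. What you overlook is that Proposition~\ref{prop:yamacs}(2) says $F$ is super-Yamanouchi iff the \emph{letters} (not merely the prismatic entries) of $T=\mathfrak c(F)$ increase up columns; once you forget colors, such a $T$ is nothing but a semi-standard Young tableau. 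Combined with Proposition~\ref{prop:charge2maj} and the uniqueness of faithful coloring, this gives a direct bijection
\[
\{F\in\F^*(\mu,\lambda): F\text{ super-Yamanouchi}\}\;\longleftrightarrow\;\SSYT(\lambda,\mu)
\]
under which $\maj(F)$ becomes $\cocharge$ of the image tableau (by~\eqref{inv2skip} and Remark~\ref{re:uncoloredcharge}). The Schur expansion is then immediate from the classical Lascoux--Sch\"utzenberger formula~\eqref{kftableaux}, with no further positivity argument needed.

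Your hesitation in step~(iii) between ``$T$ Yamanouchi'' (prismatic-increasing columns, part~(1)) and ``letters of $T$ increase in columns'' (part~(2)) is precisely what led you astray: the latter is what super-Yamanouchi delivers, and it lands you directly in $\SSYT(\lambda,\mu)$ rather than in some family for which Schur positivity still has to be established from scratch. If you instead pursue the Roberts route, note that you would additionally need the identification of $\mathcal U$ with the super-Yamanouchi fillings, which is the content of the lemma that follows the theorem in the paper and is not covered by Propositions~\ref{prop:yamacs} and~\ref{prop:charge2maj} alone.
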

\begin{proof}
Consider an inversionless filling $F\in \F^*(\mu,\cdot)$ which is super-Yamanouchi.
Note that the weight of $F$ must be a partition $\lambda\vdash |\mu|$ since $F$ is Yamanouchi.
Propositions~\ref{prop:yamacs} and~\ref{prop:charge2maj} give that the $\mathfrak c$-image 
of $F$ is a faithfully colored tabloid with letters which increase up columns.  
Since each tabloid has a unique faithful coloring, ignoring colors gives
the bijection between
\begin{equation}
\label{eq:charge2maj}
\{F\in \F(\mu,\lambda): \inv(F)=0\;\text{and}\;F~super-Yamanouchi\}
\leftrightarrow \SSYT(\lambda,\mu)\,.
\end{equation}
The $\maj(F)=\cocharge(\mathfrak f(F))$ by~\eqref{inv2skip},
and any faithfully colored circloid $C\in \C(\lambda,\mu)$
has the same cocharge as that of its manifest tabloid $T$
by Remark~\ref{re:uncoloredcharge}.
The result then follows from~\eqref{kftableaux}.
\end{proof}

The comparison of Theorem~\ref{the:Hsuper} to~\eqref{roberts} suggests that
the set $\mathcal U$ defined by Roberts is related to the super-Yamanouchi 
condition.
Roberts' formula requires inversionless, Yamanouchi fillings
with an additional property imposed upon entries in a {\it pistol} 
configuration, one that is made up of cells in row $r$ lying in 
columns $1,\ldots,c$ and cells of row $r+1$ lying in columns 
$c,\ldots,\mu_{r+1}$, for any fixed $r,c$.  In our language, a filling
is {\it jammed} if its reverse coloring results in a 
pistol containing both $x_y$ and $(x+1)_{y+1}$ for some letter $x$ with
color $y$.    When a filling is not jammed, we say it is {\it jamless}.

\begin{lemma}
The set of inversionless, super-Yamanouchi fillings is the same as 
the set of inversionless, jamless, Yamanouchi fillings.
\end{lemma}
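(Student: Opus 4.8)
The plan is to show that the two defining conditions beyond ``inversionless and Yamanouchi'' — being super-Yamanouchi, versus being jamless — are equivalent for such fillings, by tracing both through the companion map $\mathfrak c$. A filling $F\in\F^*(\mu,\lambda)$ is Yamanouchi with weight $\lambda$, so by Proposition~\ref{prop:charge2maj} its companion $T=\mathfrak c(F)$ is a faithfully colored tabloid, and by Proposition~\ref{prop:yamacs}(1) its columns are prismatic increasing. By Proposition~\ref{prop:yamacs}(2), $F$ is super-Yamanouchi exactly when the letters of $T$ strictly increase up columns. So the task reduces to: \emph{among faithfully colored tabloids with prismatic-increasing columns, the letters fail to strictly increase up some column if and only if the reverse coloring of $F$ produces a pistol containing both $x_y$ and $(x+1)_{y+1}$.}

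First I would unwind what ``letters fail to strictly increase up a column of $T$'' means on the circloid side: since columns are prismatic increasing and $T$ is in $\CT(\lambda,\cdot)$, a failure means two consecutive cells $(r,c)$ with $r_c$ in row $x$ and $r_{\hat c}$ in row $x-1$ and $c<\hat c$ (the letters $r=r$ agree but the colors $c<\hat c$), i.e. in $\mathfrak f^{-1}(F)$ the colored letter $r_c$ (in sector $x$) sits before $r_{\hat c}$ (in sector $x-1$) while $c<\hat c$. Pulling back through $\mathfrak f$, this says $F$ has an $x$ in cell $(r,c)$ and an $x-1$ in cell $(r,\hat c)$ with $c<\hat c$ in the \emph{same row $r$}, and — from prismatic-increasing columns plus faithful coloring — that the cells weakly after $(r,\hat c)$ in reading order contain equally many $x$'s and $(x-1)$'s. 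This is precisely the obstruction already isolated inside the proof of Proposition~\ref{prop:yamacs}(2); so half of the work is to observe that that internal computation \emph{is} the statement that $F$ has a ``bad'' row-configuration in the sense of a pistol.

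Next I would connect this bad configuration to the reverse coloring and the jam condition. The key point is that the reverse coloring of $F$ assigns to the $k$th occurrence of letter $x$ (in reading order) the color depending on how many copies of $x-1$ follow it; the condition ``equally many $x$'s and $(x-1)$'s weakly after $(r,\hat c)$'' forces the copy of $x$ in $(r,c)$ and the copy of $x-1$ in $(r,\hat c)$ to receive reverse-colors $y$ and $y+1$ respectively for a common $y$ (the relevant count), and since they lie in row $r$ at columns $c<\hat c$, the pistol in row $r$ through column $\hat c$ contains both $x_y$ and $(x+1)_{y+1}$ after relabeling — i.e. $F$ is jammed. Conversely, a jam $x_y,(x+1)_{y+1}$ in a pistol, read back, produces exactly such a same-row pair of consecutive colors, which by faithful coloring of the companion forces a non-strict column in $T$; hence $F$ is not super-Yamanouchi. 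Running these two implications against a filling that is already inversionless and Yamanouchi gives the biconditional, which is the lemma.

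\textbf{The main obstacle} I anticipate is the bookkeeping around the reverse coloring and the pistol shape: matching ``$c<\hat c$ in the same row of $F$ with an equal $x$/$(x-1)$-count in the reading-order suffix'' to Roberts' precise pistol geometry (row $r$ columns $1,\dots,c$ together with row $r+1$ columns $c,\dots,\mu_{r+1}$) requires care about which row the $x-1$ actually lands in and about the $+|\lambda|$-style reindexing that converts a letter $x$ into the reverse color needed to see $x_y$ and $(x+1)_{y+1}$. Everything else is an application of Propositions~\ref{prop:yamacs} and~\ref{prop:charge2maj} plus the computation already embedded in the proof of Proposition~\ref{prop:yamacs}(2), so the real content is verifying that ``jam'' and ``non-strict column'' name the same local pattern under $\mathfrak c$.
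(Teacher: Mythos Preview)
Your framework---translate super-Yamanouchi through $\mathfrak c$ via Proposition~\ref{prop:yamacs}(2) and then try to match ``non-strict column in $T$'' with ``jam in the reverse coloring of $F$''---is different from the paper's direct argument on $F$, and it has a real gap at the point you yourself flag as the main obstacle.

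The specific problem is your claimed local correspondence. From the non-strict column you correctly extract a same-row configuration in $F$: an $x$ in cell $(r,c)$ and an $x-1$ in cell $(r,\hat c)$ with $c<\hat c$ and equal $x$/$(x-1)$ counts in the reading-order suffix at $(r,\hat c)$. But reverse colors are assigned to each letter \emph{independently}---the $k$th copy of $x$ (in clockwise order) gets color $k$, with no reference to $x-1$---so nothing forces the $x$ at $(r,c)$ and the $x-1$ at $(r,\hat c)$ to receive adjacent colors $y$ and $y+1$. Your sentence ``reverse coloring of $F$ assigns to the $k$th occurrence of letter $x$ the color depending on how many copies of $x-1$ follow it'' is simply not the definition. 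Moreover, even granting adjacent colors, your pair sits entirely in row $r$; the jam pattern $x_y,(x+1)_{y+1}$ in a pistol is what you need, and you have the roles of the two letters reversed relative to that pattern.

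What the paper actually does is use inversionlessness \emph{substantively}, not just to say the companion is faithfully colored. In the direction (jamless $\Rightarrow$ super-Yamanouchi), once the leftmost $(x+1)_y$ in row $r$ is located with $x_y$ to its east, inversionlessness forces an $x$ directly \emph{below} $(x+1)_y$ in row $r-1$; that $x$ carries some reverse color $z<y$, and it is the pair $x_z,(x+1)_{z+1}$---now genuinely spread across rows $r-1$ and $r$---that lands in a single pistol. In the other direction, inversionlessness is again used to place an $x+1$ above each of a run of $x$'s and derive the contradiction. Your sketch never descends to row $r-1$, so it cannot produce the two-row pistol the jam condition requires; the ``everything else is just Propositions~\ref{prop:yamacs} and~\ref{prop:charge2maj}'' claim understates what remains.
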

\begin{proof}
Suppose an inversionless, super-Yamanouchi filling $F$ is jammed and for convenience, consider 
its reverse coloring.  Since $F$ is jammed, it has rows $r$ and $r+1$ with a pistol containing
$x_y$ and $(x+1)_{y+1}$.  If $x_i$ does not lie in a lower row than $(x+1)_j$ of a super-Yamanouchi filling,
then $j<i$.  Therefore, $x_y$ must lie in cell $(r,c)$ of $F$ and $(x+1)_{y+1}$ in 
cell $(r+1,\hat c)$, for some $\hat c\geq c$.  Moreover, $x_{y+1}$ lies in row $r$ of $F$.
Consider the minimal color $i$ adorning $x$ in the set of rows higher than row $r$.
Since $x_{i-1},x_{i-2},\ldots,x_{y+1}$ lie west of column $\hat c$ in row $r$, 
and $F$ is inversionless, an $x+1$ lies above each of these $x$'s.
Therefore, $(x+1)_i$ lies in row $r+1$ contradicting that $F$ is
super-Yamanouchi.

On the other hand, suppose that a filling $F$ is jamless, inversionless, and
Yamanouchi but not super-Yamanouchi.  Then there is some row $r$ and letter $x+1$ in $F$ where
the number $y$ of $x+1$'s weakly below row $r$ is greater than the number of $x$'s below row $r$.  
In particular, $y$ is the color adorning the leftmost $x+1$ in row $r$ in the reverse coloring
of $F$.  Further, the $x$ colored $y$ must lie after this $(x+1)_y$ in reading order since
$F$ is Yamanouchi.  However, it is not super-Yamanouchi and therefore $x_y$ lies in row $r$.
Since $F$ is inversionless and has $(x+1)_y$ west of $x_y$ in row $r$, $x$ must lie below $(x+1)_y$.  
When reverse colored, this $x$ has color $z<y$.  In turn, $z<y$ implies $(x+1)_{z+1}$ must lie weakly 
after $(x+1)_y$ (in reading order).  However, the Yamanouchi condition requires that $(x+1)_{z+1}$ 
lies before $x_z$.  Therefore, the pistol based at $x_z$ contains $(x+1)_{z+1}$ 
contradicting that $F$ is not jammed.
\end{proof}

\begin{cor}
Roberts' formula~\eqref{roberts} and the Lascoux-Sch\"utzenberger formula~\eqref{kftableaux} 
for $q=0$ Macdonald polynomials are related by the companion map $\mathfrak c$.
\end{cor}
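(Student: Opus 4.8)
The plan is to exhibit a weight-preserving, $\maj$-preserving bijection between the two index sets that Roberts' formula~\eqref{roberts} and the Lascoux-Sch\"utzenberger formula~\eqref{kftableaux} sum over, and to identify that bijection with the companion map $\mathfrak c$ (or rather, with the composite $\iota\circ\mathfrak f^{-1}$ followed by forgetting colors). More precisely, Roberts sums over the set $\mathcal U$ of inversionless, jamless, Yamanouchi fillings of shape $\mu$, while Lascoux-Sch\"utzenberger sum over $\SSYT(\lambda,\mu)$ for the various $\lambda\vdash|\mu|$. By the Lemma immediately preceding this corollary, $\mathcal U$ coincides with the set of inversionless, super-Yamanouchi fillings of shape $\mu$, so it suffices to relate the latter to semistandard tableaux.

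The first step is to invoke the bijection~\eqref{eq:charge2maj} established inside the proof of Theorem~\ref{the:Hsuper}: for each partition $\lambda\vdash|\mu|$, the map $F\mapsto$ (forget colors of $\mathfrak c(F)$) is a bijection from $\{F\in\F(\mu,\lambda):\inv(F)=0\text{ and }F\text{ super-Yamanouchi}\}$ onto $\SSYT(\lambda,\mu)$. Summing over all $\lambda$ and using the Lemma to rewrite the left-hand index set as $\mathcal U$ restricted to weight $\lambda$, this gives a single bijection $\mathcal U\to\bigsqcup_{\lambda\vdash|\mu|}\SSYT(\lambda,\mu)$ which is manifestly induced by $\mathfrak c$. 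The second step is to check the statistics match up: Roberts' formula weights $F\in\mathcal U$ by $t^{\maj(F)}s_{\weight(F)}$, the Lascoux-Sch\"utzenberger formula weights $P\in\SSYT(\lambda,\mu)$ by $t^{\cocharge(P)}$ and sits inside the $s_\lambda$-coefficient of $\tilde H_\mu(x;0,t)$. Since $\weight(F)=\lambda$ when $F$ maps to $\SSYT(\lambda,\mu)$, the Schur indices agree; and by~\eqref{inv2skip} we have $\maj(F)=\cocharge(\mathfrak f(F))$, while Remark~\ref{re:uncoloredcharge} guarantees that the faithfully colored circloid $\mathfrak f(F)$ has the same cocharge as its manifest (uncolored) tabloid $P$. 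Hence $\maj(F)=\cocharge(P)$, and the two formulas are term-by-term identified under $\mathfrak c$.

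There is essentially no remaining obstacle: all the substantive work has already been done in Theorem~\ref{the:Hsuper} and its Lemma. The corollary is really just the observation that the \emph{same} map $\mathfrak c$ that was used to prove Theorem~\ref{the:Hsuper} simultaneously realizes the equivalence of the two classical-looking Schur expansions~\eqref{roberts} and~\eqref{kftableaux}; the only thing to verify carefully is that the hypotheses of Roberts' theorem (inversionless, jamless, Yamanouchi) are exactly the hypotheses appearing in~\eqref{eq:charge2maj} (inversionless, super-Yamanouchi), which is precisely the content of the preceding Lemma. I would therefore write the proof as: by the Lemma, the index set of~\eqref{roberts} equals $\{F:\inv(F)=0,\ F\text{ super-Yamanouchi}\}$; by~\eqref{eq:charge2maj} this is in $\mathfrak c$-bijection with $\bigsqcup_\lambda\SSYT(\lambda,\mu)$, the index set of~\eqref{kftableaux}; and by~\eqref{inv2skip} together with Remark~\ref{re:uncoloredcharge} this bijection sends $t^{\maj}$ to $t^{\cocharge}$ and preserves the Schur index, so it identifies the two formulas termwise. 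The point worth emphasizing is conceptual rather than computational: $\mathfrak c$ is the single unifying map, so I would keep the proof short and simply assemble these three already-proven ingredients.
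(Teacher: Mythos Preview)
Your proposal is correct and matches the paper's intended argument: the corollary is stated without proof precisely because it is immediate from the preceding Lemma together with the bijection~\eqref{eq:charge2maj} and the statistic-matching~\eqref{inv2skip} already established in the proof of Theorem~\ref{the:Hsuper}, and you have assembled exactly these three ingredients. One small notational slip (inherited from the paper itself): $\mathfrak f$ maps circloids to fillings, so where you write $\cocharge(\mathfrak f(F))$ you mean $\cocharge(\mathfrak f^{-1}(F))$.
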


\section{Crystals}

The quantum enveloping algebra $U_q(\mathfrak{sl}_{n+1})$  is the $\mathcal Q(q)$-algebra generated by
elements $e_i,f_i, t_i, t_i^{-1}$, for $1\leq i\leq n$, subject to certain relations.
For a $U_q(\mathfrak{sl}_{n+1})$-module $\mathcal M$ and $\lambda\in\mathbb Z^{n+1}$,
the weight vectors (of weight $\lambda$) are elements of the set
$M_\lambda = \{u\in \mathcal M: t_i u = q^{\lambda_i-\lambda_{i+1}} u\}$.
A weight vector is said to be primitive if it is annihilated by the $e_i$'s.
A {\it highest weight} $U_q(\mathfrak{sl}_{n+1})$-module is a module $\mathcal M$ containing a primitive vector $v$ 
such that $\mathcal M= U_q(\mathfrak{sl}_{n+1}) v$.
The irreducible highest weight module with highest weight $\lambda$ is denoted $V_\lambda$.

Kashiwara~\cite{Kas1,Kas2} introduced a powerful theory whereby combinatorial graphs are used 
to understand finite-dimensional integrable $U_q({\mathfrak sl}_n)$-modules $\mathcal M$.
The {\it crystal} of $\mathcal M$ is a set $B$ equipped with a weight function 
$\wt: B\to \{ x^\alpha : \alpha\in \mathbb Z_{\geq 0}^\infty\}$
and operators $\tilde e_i,\tilde f_i: B\to B\cup\{0\}$ satisfying the properties,
for $a,b\in B$,
\begin{itemize}
\item $\tilde e_i a=b \iff \tilde f_ib=a$
\item $\tilde e_i a=b\implies \wt(b)=\frac{x_i}{x_{i+1}}\wt(a)$.
\end{itemize}
The {\it crystal graph} associated to $B$ is a directed, colored graph 
with vertices from $B$ and edges $E=\{\{a,b\}: b=\tilde e_i(a)\,\text{for some $i$}\}$
labeled by color $i\in I=\{1,\ldots,n-1\}$.  Irreducible submodules are 
in correspondence with {\it highest weight vectors} $b\in B$ where $\tilde e_i(b)=0$ for all $i$.  
These are the vertices of the crystal graph with no incoming edges;
each connected component of $B$ represents an irreducible and contains
a single highest weight vector.  The subset of highest weight vectors $b\in B$ 
where $\wt(b)=x^\gamma$ is denoted by
$\mathbb Y(B,\gamma)$.

The tensor product crystal graph $B_1\otimes\cdots\otimes B_k$
has vertices in the Cartesian product $(b_1,\ldots,b_k)\in B_1\times\cdots\times B_k$ which
are denoted $b=b_1\otimes\cdots\otimes b_k$.  Its weight function  is defined by
$$
\wt(b)=\prod_{j=1}^n \wt_{B_j}(b_j)\,.
$$
A morphism $\Phi: B\rightarrow B'$ is a map on crystal graphs where
$\Phi(0) =0$ and otherwise, for $b\in B$,
\begin{itemize}
\item $\Phi(\tilde f_i(b))=\tilde f_i(\Phi(b))$
\item $\Phi(\tilde e_i(b))=\tilde e_i(\Phi(b))$
\item $\wt(\Phi(b))=\wt(b)$.
\end{itemize}

Lascoux and Sch\"utzenberger anticipated the necessary ingredients for the
Kashiwara type-$A$ crystal in their development of the plactic monoid on words~\cite{LSplactic,LLTplactic}.
It is given by the set 
$$B(1)^n=B(1)\otimes \cdots\otimes B(1)$$
of words in the alphabet 
$B(1)=[n]$; the crystal action $\tilde e_i,\tilde f_i$ is
defined on $b\in B(1)^n$ by changing a single $i$ (or $i+1$) 
to an $i+1$ (or $i$) in the restriction of $b$ to the subword $w_{\{i,i+1\}}$.
Regarding each letter as a parenthesis, $i+1$ as a left and $i$ as a right,
adjacent pairs of parentheses ``()"  are matched and declared to be invisible 
until no more matching can be done.  It is a letter in the remaining subword, $z=i^p(i+1)^q$ 
for some $p,q\in\mathbb Z_{\geq 0}$, which is changed.
Precisely, $\tilde e_i(b)=0$ when $q=0$, 
$\tilde f_i(b)=0$ when $p=0$, and otherwise
$\tilde e_i(b)$ is the word formed from $w$ by replacing the subword $z$ with $i^{p+1}(i+1)^{q-1}$ and
$\tilde f_i(b)$ is formed by replacing $z$ with $i^{p-1}(i+1)^{q+1}$.

\begin{remark}
\label{re:conmaj}
Parentheses pairing of any $b\in B(1)^n$ 
has the property that every adjacent $(i+1)\,i$ is paired, 
and the first $i$ in any adjacent pair $ii$ is never the rightmost unpaired entry.  
Therefore, descents of such pairs are preserved by the action of $\tilde e_i,\tilde f_i$ and
$\des(\tilde e_i(b))=\des(\tilde f_i(b))=\des(b)$ when $b$ is not anhilated.
\end{remark}

%The descent set of elements in $B(1)^n$ thus impose a natural grading on the module.

For $\mu\vdash n$, since $\tilde e_i$ annihilates only the Yamanouchi words,
the set of highest weights of $B=B(1)^n$ with $\wt(b)=x^\mu$ is
\begin{equation}
\label{hwt}
\mathbb Y(B,\mu)=\{b\in B : \text{$b$ is Yamanouchi of weight $\mu$}\}
\,.
\end{equation}
As dictated by Kashiwara's theory,
the crystal graph $B(\mu)$ of the irreducible submodule $V_\mu$ is isomorphic 
to a connected subgraph of $B(1)^n$ which contains a Yamanouchi word of weight $\mu$, and
\begin{equation}
\label{hcrystal}
B \cong \bigoplus_{\mu\vdash n} B(\mu )\times \mathbb Y(B,\mu)\,.
\end{equation}

The crystal graph $B(m)$ is isomorphic to the subgraph of elements $b\in B(1)^m$ with no descents
since $b=(1,\ldots,1)$ is the only element in $\mathbb Y(B(1)^m, (m))$.
Therefore, for any $\gamma\models n$ of length $\ell$, the tensor product crystal
$$
B=B(\gamma_1)\otimes B(\gamma_2)\otimes B(\gamma_\ell)
$$ 
has highest weight elements given by Yamanouchi words which are non-decreasing 
in the first $\gamma_1$ positions, in the next $\gamma_2$ positions, and so forth.

\subsection{Singly graded Garsia-Haiman modules}

A crystal structure on circloids leads us to a characterization for the singly graded decomposition of Garsia-Haiman modules 
which preserves the spirit of the Garsia-Procesi module decomposition given by~\eqref{kftableaux}.

We first refine the decomposition of $B=B(1)\otimes\cdots\otimes B(1)$.  For any
$D\subset\{1,\ldots,n-1\}$, define the induced subposet $B(D)$ of $B$ by restriction to
vertex set $\{ b\in B: \des(b)=D\}$.

\begin{theorem}
\label{the:macq1}
For $\lambda\vdash n$,
\begin{equation}
\label{macq1}
\tilde H_\lambda(x;1,t) = \sum_{D\subset[n-1]} t^{\maj_{\lambda'}(D)}\,
\sum_{\text{highest weight $b\in B(D)$}} s_{\wt(b)}(x)\,,
\end{equation}
where
$$
\maj_\nu(D) = \sum_{i=1}^{\ell(\nu)}\sum_{d\in D\atop |\nu_{<i-1}|<d<|\nu_{<i}|}  \nu_i-d\,.
$$
%\Jennifer{did we define $\nu_{<i}$}
\end{theorem}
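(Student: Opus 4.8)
The plan is to start from Theorem~\ref{the:rjformula}, which already expresses $\tilde H_\lambda(x;q,t)$ as a generating function over circloids $C\in\C(\cdot,\lambda)$ weighted by $q^{\ski(C)}t^{\cocharge(C)}x^{\shape(C)}$, and then to specialize $q=1$. Setting $q=1$ kills the betrayal statistic entirely, so $\tilde H_\lambda(x;1,t)=\sum_{C\in\C(\cdot,\lambda)}t^{\cocharge(C)}x^{\shape(C)}$. By Remark~\ref{re:uncoloredcharge}, every circloid of partition weight $\mu$ arises from exactly one word $w$ of weight $\mu$ with the same cocharge (via the standard-subword coloring), but for the $q=1$ count we want to instead organize all colored tabloids/circloids of weight partition $\mu$ according to their \emph{uncolored} underlying word. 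Since $\cocharge(C)=\sum_i\cocharge(\Ccolor i)$ depends only on the positions of letters with each color, and since $\ski$ has been discarded, the first step is to transfer the sum over circloids of weight $\lambda$ (summed over all shapes) to a sum over the underlying uncolored words: via the bijection $\mathfrak f$ of Theorem~\ref{the:rjformula} composed with $\iota$, a circloid of shape $\gamma$ and weight $\lambda$ corresponds to a filling $f\in\F(\lambda,\gamma)$ with $\maj(f)=\cocharge(C)$, and at $q=1$ the inversionless condition is dropped, so one is summing $t^{\maj(f)}x^{\wt(f)}$ over \emph{all} fillings of $\lambda$ — recovering~\eqref{jimformulabody} at $q=1$, which is of course consistent, but this is not yet the Schur-expanded form we want.

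The real content is to reorganize $\sum_{F\in\F(\lambda,\cdot)}t^{\maj(F)}x^{\wt(F)}$ into $\sum_D t^{\maj_{\lambda'}(D)}\sum_{\text{h.w. }b\in B(D)}s_{\wt(b)}(x)$. The key observation is that the major index of a filling $F$ of shape $\lambda$ depends only on the \emph{descent set in columns}: $\maj(F)=\sum_{(r,c):\,F_{(r,c)}>F_{(r-1,c)}}(\lambda'_c-r+1)$. I would pass from column-fillings of shape $\lambda$ to words by the column-reading word (reading columns top-to-bottom, columns left to right, which is the reading order of the excerpt restricted appropriately), giving a word $w$ of length $n$ broken into blocks of sizes $\lambda'_1,\lambda'_2,\ldots$ — precisely an element of the tensor product crystal $B(\lambda'_1)\otimes\cdots$. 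Under this correspondence, a column-descent of $F$ at cell $(r,c)$ becomes an ordinary descent of $w$ at position $|\lambda'_{<c}|+r-1$ \emph{within} the $c$th block, and the weight $\lambda'_c-r+1$ attached to it is exactly the quantity summed in the definition of $\maj_{\lambda'}(D)$ when $d=|\lambda'_{<c-1}|+r-1$ sits in block $c$. Hence $\maj(F)=\maj_{\lambda'}(\Des(w))$, a function of $\Des(w)$ alone; note $\Des(w)$ never contains a block boundary precisely because the tensor factors are $B(\lambda'_c)$ (no-descent pieces among the column entries). This lets me group the sum: $\sum_F t^{\maj(F)}x^{\wt(F)}=\sum_{D}t^{\maj_{\lambda'}(D)}\sum_{w:\Des(w)=D}x^{\wt(w)}$, where $w$ ranges over $B=B(1)^{\otimes n}$ restricted to $B(D)$.

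Finally I would invoke crystal theory. By Remark~\ref{re:conmaj}, the type-$A$ Kashiwara operators $\tilde e_i,\tilde f_i$ preserve the descent set $\des(b)$, so $B(D)$ is a union of crystal connected components — a sub-crystal of $B(1)^{\otimes n}$. By~\eqref{hwt} and~\eqref{hcrystal} each component is isomorphic to some $B(\mu)$ and contributes $s_\mu(x)$ to $\sum_{b\in B(D)}x^{\wt(b)}$, with the multiplicity being the number of highest weight vectors $b\in B(D)$ of weight $\mu$. Therefore $\sum_{w\in B(D)}x^{\wt(w)}=\sum_{\text{h.w. }b\in B(D)}s_{\wt(b)}(x)$, and substituting into the grouped sum yields exactly~\eqref{macq1}. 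The main obstacle I anticipate is bookkeeping the index shifts in matching column-descents of $F$ with positions in the word and confirming that the weight $\lambda'_c-r+1$ lines up with the summand $\nu_i-d$ in $\maj_\nu(D)$ for $\nu=\lambda'$; the crystal-theoretic step is essentially automatic once the descent-preservation from Remark~\ref{re:conmaj} is in hand, and the $q=1$ specialization is immediate from Theorem~\ref{the:rjformula}. A clean way to avoid the index arithmetic is to first prove the identity $\maj(F)=\maj_{\lambda'}(\Des(\text{col-word}(F)))$ as a standalone lemma, perhaps by induction on the number of columns, so that the body of this proof is just the crystal grouping argument.
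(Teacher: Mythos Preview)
Your approach is correct and essentially identical to the paper's: both start from the $q=1$ specialization of~\eqref{jimformulabody}, identify fillings of $\lambda$ with words in $B(1)^{\otimes n}$ via a column reading, observe that $\maj(F)=\maj_{\lambda'}(\Des(b))$ depends only on the descent set of the resulting word, and then invoke Remark~\ref{re:conmaj} so that each $B(D)$ is a union of crystal connected components, giving the Schur expansion. One small inaccuracy to excise: arbitrary fillings do \emph{not} land in $B(\lambda'_1)\otimes\cdots\otimes B(\lambda'_k)$ (columns of a generic filling need not be weakly increasing), and $\Des(w)$ \emph{can} contain block boundaries; this is harmless because $\maj_{\lambda'}$ simply assigns weight zero to descents at those positions, so your grouping by $D$ and the crystal step go through unchanged.
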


\begin{proof}
For any $D\subset [n-1]$, Remark~\ref{re:conmaj} implies that
$B(D)$ is a crystal graph made up of the disjoint union of connected components 
in $B(1)^n$.  Therefore, the Frobenius image of the module associated to $B(D)$ is
$$
\sum_{\text{highest weight $b\in B(D)$}} s_{\wt(b)}(x)\,.
$$
The highest weights are
\begin{equation}
\mathbb Y(B(D),\mu)=\{b\in B(1)^n : \des(b)=D\, \text{and $b$ is Yamanouchi of weight $\mu$}\}
\,.
\end{equation}
More generally, the right hand side of~\eqref{macq1} reflects the graph decomposition of 
the crystal $B(1)^n$ into $B(D)$, graded by $\maj_{\lambda'}(D)$.

On the other hand, Macdonald polynomials at $q=1$ are presented in~\eqref{jimformulabody} 
as weight generating functions of $\lambda$-shaped fillings graded by maj.
Each filling $f\in\F(\lambda,\cdot)$ can be uniquely identified with
a vertex $b\in B(1)^n$ by reading the columns of $f$ from top to bottom
(choosing any fixed column order).  
Consider the filling $f_b$ identified by vertex $b$.   Since
the computation of $\maj(f_b)$ relies only on descents in columns of $f_b$,
precisely the subset of descents involved in the computation of
$\maj_{\lambda'}(\des(b))$, we have that $\maj_{\lambda'}(\des(b)) = \maj(f_b)$.
By definition, $\maj_{\lambda'}(\des(b))$ is constant over all elements $b\in B(D)$
and therefore $\maj(f)$ is constant on all fillings $f$ associated to $b\in B(D)$.
\end{proof}

%Every element of the crystal $B(\mu)$ thus has the same descent set.
%The previous remark implies that elements in a connected component of $B(1)^n$ have
%the same descent set.  Thus, for any $\mu\vdash n$, the induced subposet 
%$B(\mu)=\{b\in B(1)^n: {\rm set}^{-1}(\des(b))=\mu\}$
%is a crystal graph for the irreducible $V_\mu$.
%

\begin{remark}
\label{re:majnotconstant}
Although each vertex $b\in B(1)^n$ could be uniquely identified with the filling $f$ 
of shape $\lambda\vdash n$ whose {\it reading word} is $b$,
maj is not constant on all fillings in the same connected component
under this correspondence.
\end{remark}

\label{circloidcrystal}

The interaction of crystals with the cocharge statistic comes out of a directed, colored graph $\mathcal B(\gamma)$ 
whose vertices are circloids of weight $\gamma$.  An $i$-colored edge between circloids
is imposed by operators, $\tilde e_i$ and $\tilde f_i,$ which move an entry from sector $i+1$ 
to sector $i$ or vice versa using a method of pairing colored letters.

Pairing is a process which iterates over each entry in a given sector.
Entries are considered from smallest to largest with respect to the 
{\it co-prismatic order}, defined on colored letters by
$$
x_y>' u_v \iff
y>v\quad\text{or}\quad
y=v\;\text{and}\; x>u\,.
$$
Pairing is done by writing the entries from sectors $i$ and $i+1$ in co-prismatic decreasing order, assigning every
entry from sector $i+1$ a left parenthesis and every entry from sector $i$ a right parenthesis.  Entries are then
paired as per the Lascoux and Sch\"utzenberger rule for parenthesis.

\begin{definition}
For a composition $\alpha$ and $i\in\{1,\ldots,\ell(\alpha)-1\}$, the operator $\tilde e_i$ acts on $C\in\C(\cdot,\alpha)$ 
by moving the largest unpaired entry in sector $i+1$ to the unique position of sector $i$ which 
preserves the prismatic decreasing condition on circloids.  In contrast, $\tilde f_i$ acts on $C$ by moving the 
smallest unpaired entry in sector $i$ to sector $i+1$.
% and 
%$$
%\tilde s_i=\begin{cases} \tilde e_i^a & \text{if $\alpha_{i+1}>\alpha_i$}\\
%\tilde f_i^a &\text{otherwise}\,,
%\end{cases} 
%$$
%where $a=|\alpha_{i+1}-\alpha_i|$.
\end{definition}

\begin{remark}
For $\alpha,\gamma\models n$ and a circloid $C\in \C(\gamma,\alpha)$, 
if $\tilde e_i$ does not anhilate $C$ then
its action preserves weight since it involves only moving an entry;
the entry is moved from sector $i+1$ to sector $i$, implying that
$\tilde e_i(C)\in\C(\beta,\alpha)$
where $\beta=(\gamma_1,\ldots,\gamma_{i-1},\gamma_{i}+1,\gamma_{i+1}-1, \ldots)$.
\end{remark}

\begin{theorem}
For any composition $\gamma\models n$, $\mathcal B(\gamma)$ is a crystal graph and its highest weights
(of weight $\mu\vdash n$) are in bijection with
$$
\SSCT(\cdot,\mu)= \{T\in\CT(\cdot,\mu) : \text{colored letters increase up columns of $T$, with respect to $<'$}\}\,.
$$
When $\gamma$ is a partition, the connected components of $\mathcal B(\gamma)$ are constant on $\cocharge$.
\end{theorem}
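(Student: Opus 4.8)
The plan is to verify the crystal axioms for $\mathcal B(\gamma)$ and then identify its highest weight vertices and its cocharge grading. First I would check that $\tilde e_i$ and $\tilde f_i$ are mutually inverse on $\mathcal B(\gamma)$: since the pairing process on sectors $i$ and $i+1$ is literally the Lascoux--Sch\"utzenberger parenthesis matching applied to the co-prismatic linearization of those two sectors, the claim reduces to the corresponding fact for the type-$A$ word crystal $B(1)^n$ described earlier in the excerpt. The key subtlety is that after moving the largest unpaired entry from sector $i+1$ into sector $i$ (placed in the unique prismatic-decreasing position), the \emph{set} of colored letters in sectors $i,i+1$ changes but the pairing of the remaining letters is preserved, because moving the extreme unpaired letter in a balanced-parenthesis word is exactly the word-crystal operator. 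Hence the two bulleted crystal axioms ($\tilde e_i a=b\iff \tilde f_i b=a$ and the weight shift by $x_i/x_{i+1}$) follow, and $\mathcal B(\gamma)$ is a crystal graph; in fact it should be isomorphic, sector by sector, to the tensor-product crystal $B(\gamma_1)\otimes\cdots\otimes B(\gamma_{\ell(\gamma)})$ via the reading map, up to the relabeling induced by co-prismatic order.

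Next I would pin down the highest weight vertices. A circloid $C\in\mathcal B(\gamma)$ is a highest weight iff $\tilde e_i(C)=0$ for all $i$, i.e. no letter in sector $i+1$ is unpaired against sector $i$, for every $i$. Passing through the bijection $\iota:\C(\gamma,\alpha/\beta)\to\CT(\gamma,\alpha/\beta)$, I would show this ``all parentheses matched'' condition is equivalent to: every colored letter in row $i+1$ of the tabloid $T=\iota(C)$ can be matched with a strictly $<'$-smaller colored letter directly above it in the same column, which is exactly the assertion that colored letters strictly increase up columns with respect to $<'$. This is the same style of argument as Proposition~\ref{prop:yamacs}(1), with the prismatic order replaced by co-prismatic order because the pairing uses co-prismatic decreasing linearization; so the set of highest weights of weight $\mu$ is in bijection with $\SSCT(\cdot,\mu)$.

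Finally, for the cocharge statement I would assume $\gamma=\mu$ is a partition and show $\tilde e_i$ (when it does not annihilate) preserves $\cocharge(C)=\sum_k \cocharge(\Ccolor{k})$. The point is that $\tilde e_i$ moves an entry between sectors $i$ and $i+1$ without changing which \emph{colors} appear, and without touching any sector other than $i$ and $i+1$; moreover, by the reinterpretation of cocharge used in the proof of Theorem~\ref{the:rjformula} — $\cocharge(\Ccolor{k})=\sum_r L_r$ where $L_r=\mu_k'-r+1$ exactly when $r_k$ sits in a strictly larger sector than $(r-1)_k$ — the contribution of color $k$ depends only on the sequence of sectors occupied by $1_k,2_k,\ldots$, i.e. on the relative order of the sector boundaries. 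Since $\tilde e_i$ only swaps adjacency across the $i\,|\,i+1$ boundary and $\mu_i'=\mu_{i+1}'$ cannot hold in general, one must argue via the parenthesis structure that every color $k$ either has its $r_k$ entries entirely outside $\{$sector $i$, sector $i+1\}$, or the moved entry is matched in such a way that the gain in $L_r$ for one color is offset by the loss for another. I expect this bookkeeping — showing the net change in $\sum_k\sum_r L_r$ under $\tilde e_i$ is zero using the parenthesis-matching invariant from Remark~\ref{re:conmaj} — to be the main obstacle; everything else is a translation of known word-crystal facts through the bijections $\mathfrak f$, $\iota$, and $\mathfrak c$ already established in the paper.
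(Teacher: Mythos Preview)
Your treatment of the first two claims is essentially on the right track, though the aside that $\mathcal B(\gamma)$ is ``sector by sector'' isomorphic to $B(\gamma_1)\otimes\cdots\otimes B(\gamma_{\ell(\gamma)})$ reveals a confusion you should correct: in $\mathcal B(\gamma)$ the composition $\gamma$ is the \emph{weight} of the circloids, not their shape, and the sector structure (shape) is precisely what changes under $\tilde e_i,\tilde f_i$. The paper makes this clean by pushing through $\mathfrak f$: for $C\in\C(\cdot,\gamma)$ the filling $\mathfrak f(C)$ has fixed shape $\gamma$, and reading its columns top-to-bottom, right-to-left, gives a word $\Phi(C)\in B(1)^n$. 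One then checks that the co-prismatic pairing on sectors $i,i+1$ becomes exactly the usual parenthesis pairing of $i$'s and $(i+1)$'s in $\Phi(C)$, so $\Phi$ is a crystal isomorphism $\mathcal B(\gamma)\cong B(1)^n$. Your highest-weight argument also needs a small adjustment: rows of $\iota(C)$ are \emph{prismatic}-increasing, not co-prismatic, so ``same column'' does not directly encode the co-prismatic matching; the paper deals with this by rearranging rows into co-prismatic order before reading off columns.

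The real gap is the cocharge statement. Your plan to track $\sum_k\sum_r L_r$ directly under $\tilde e_i$ and hope for cancellations is the wrong strategy, and you correctly sense it will not close. The paper's route is much shorter and uses only facts already established: by \eqref{inv2skip} one has $\cocharge(C)=\maj(\mathfrak f(C))$, and $\maj$ of a filling of fixed shape $\gamma$ depends only on the descent set of its column-reading word $\Phi(C)$. Since Remark~\ref{re:conmaj} shows that the word-crystal operators preserve $\des$, and $\Phi$ intertwines the operators, $\cocharge$ is automatically constant on connected components. The hypothesis that $\gamma$ is a partition is what makes $\maj$ (and hence \eqref{inv2skip}) available. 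You should abandon the bookkeeping and route the argument through $\mathfrak f$ and $\Phi$.
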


\begin{proof}
Given $\gamma\models n$,
let $\phi_i$ act on $\F(\gamma,\cdot)$ by the induced action 
$\phi_i=\mathfrak f\circ\tilde e_i\circ\mathfrak f^{-1}$.
When an entry $x_y$ in circloid $C$ is paired with $u_v<'x_y$ by the action of $\tilde e_i$,
an $i+1$ in cell $(x,y)$ of $\phi_i(f^{-1}(C))$ is paired with an $i$ in cell $(u,v)$ where 
either $y>v$ or ($y=v$ and $x>u$).  Consider the graph on $\F(\gamma',\cdot)$ where an $i$-colored 
edge connects $f$ and $\hat f$ when $\hat f'=\phi_i(f')$; 
when each filling is replaced by its reading word $b$,
this is the crystal graph $B(1)^n$.  In particular, 
a crystal morphism $\Phi:\mathcal B(\gamma)\to B(1)^n$ 
is given by
$$
\Phi(C)= b\,,
$$
where $b$ is the reading word obtained by reading {\it down the columns} of the filling $\mathfrak f(C)$ 
from {\it right to left}.  That is, cell $(x,y)$ of $\mathfrak f(C)$ is read before cell $(u,v)$ if $y>v$ or ($y=v$ and
$x>u$).  This is equivalent to $x_y>'u_v$.
The weight function on $\mathcal B(\gamma)$ maps circloid $C$ to its shape
by~\eqref{codomainf}.

A highest weight $C\in \mathcal B(\gamma)$ satisfies $\tilde e_i(C)=0$ for all $i$ if and only if
each entry $x_y$ in row $i+1$ of $T=\iota(C)\in\CT(\mu,\cdot)$ pairs with 
an entry $u_v<' x_y$ in row $i$.  By rearranging the rows of $T$ so that they are co-prismatically non-decreasing
this will result in co-prismatic increasing columns.
Note that $\mu$ must be a partition, for if a sector $i+1$ of $C$ has more entries than 
sector $i$, then $C$ has an unpaired entry and is not anhilated.
\end{proof}

The circloid crystal captures a formula for the $q=1$ Macdonald polynomials which is 
perfectly aligned with the long-standing formula for $q=0$ given by Lascoux-Sch\"utzenberger.

\begin{corollary}
For any partition $\mu$,
$$
\tilde H_\mu({x};1,t)= 
\sum_{T\in\SSCT(\cdot,\mu)}
t^{\cocharge(T)}\,s_{\shape(T)} ({x})
\,.
$$
\end{corollary}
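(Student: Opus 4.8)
The plan is to deduce this corollary directly from Theorem~\ref{the:macq1} and the preceding theorem about the circloid crystal $\mathcal B(\mu)$, by observing that both give decompositions of the same Macdonald polynomial, but repackaged along different but compatible gradings. First I would set $\mu\vdash n$ and apply Theorem~\ref{the:rjformula} at $q=1$, which collapses the $q^{\ski(C)}$ weight and gives
\[
\tilde H_\mu(x;1,t)=\sum_{C\in\C(\cdot,\mu)} t^{\cocharge(C)}\,x^{\shape(C)}.
\]
The right side is a sum over all circloids of weight $\mu$, and $\shape(C)$ is exactly $\wt(\Phi(C))$ under the crystal morphism $\Phi:\mathcal B(\mu)\to B(1)^n$ from the circloid crystal theorem. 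So the strategy is to group the circloids of $\C(\cdot,\mu)$ into the connected components of $\mathcal B(\mu)$, use the fact (from that theorem) that $\cocharge$ is constant on each connected component when $\mu$ is a partition, and then replace each connected component's contribution by $t^{\cocharge}$ times the character of the corresponding irreducible, i.e. the single Schur function indexed by the shape of the component's highest weight.

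The key steps, in order: (1) Invoke Theorem~\ref{the:rjformula} at $q=1$ to write $\tilde H_\mu(x;1,t)$ as the $t$-cocharge generating function over $\C(\cdot,\mu)$. (2) Recall from the circloid crystal theorem that $\mathcal B(\mu)$ is a crystal graph whose connected components are Kashiwara crystals of irreducible type-$A$ modules; since the weight function sends $C$ to $\shape(C)$, summing $x^{\shape(C)}$ over one connected component yields $s_{\shape(T)}(x)$ where $T$ is the highest weight of that component. (3) Use the constancy of $\cocharge$ on connected components (again from the circloid crystal theorem, valid because $\mu$ is a partition) to pull the factor $t^{\cocharge}$ out of each component sum; the value is $\cocharge(C)$ for any $C$ in the component, and in particular equals $\cocharge(T)$ for the highest weight $T$. (4) Sum over connected components, which by the same theorem are indexed bijectively by highest weights $T\in\SSCT(\cdot,\mu)$, to obtain exactly $\sum_{T\in\SSCT(\cdot,\mu)} t^{\cocharge(T)}\,s_{\shape(T)}(x)$.

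The main obstacle I anticipate is step (2)--(3): justifying that ``the Frobenius image of the module attached to each connected component is a single Schur function indexed by the highest weight's shape, graded by the common $\cocharge$ value'' is really the statement that cocharge descends to a well-defined statistic on connected components and is compatible with Kashiwara's theory in the same way $\maj_{\lambda'}$ was in Theorem~\ref{the:macq1}. Concretely, one must check that identifying circloids with fillings via $\mathfrak f$ intertwines the circloid crystal operators $\tilde e_i,\tilde f_i$ with the plactic crystal operators on $B(1)^n$ (this is exactly the morphism $\Phi$ constructed in the proof of the circloid crystal theorem), and that $\cocharge(C)=\maj(\mathfrak f(C))$ from~\eqref{inv2skip} then transports the constancy-on-components property. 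Once that compatibility is in hand the corollary is a formal bookkeeping exercise; alternatively, one could bypass the crystal language and argue purely combinatorially by comparing Theorem~\ref{the:macq1} (which writes $\tilde H_\mu(x;1,t)$ as $\sum_D t^{\maj_{\mu'}(D)}\sum_{\text{hw }b\in B(D)} s_{\wt(b)}$) term-by-term with the $\mathcal B(\mu)$ decomposition, matching each descent set $D$ to the cocharge level $\maj_{\mu'}(D)$ and each highest weight word $b$ to a colored tabloid in $\SSCT(\cdot,\mu)$, but the crystal route is cleaner since the needed morphism is already established.
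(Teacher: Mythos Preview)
Your proposal is correct. Your primary route---specialize Theorem~\ref{the:rjformula} at $q=1$ and then decompose $\C(\cdot,\mu)$ into connected components of the circloid crystal $\mathcal B(\mu)$, pulling out the component-constant $\cocharge$ and replacing each component's monomial sum by the Schur function indexed by its highest weight in $\SSCT(\cdot,\mu)$---is slightly more direct than the paper's. The paper instead starts from Theorem~\ref{the:macq1}, which already packages $\tilde H_\lambda(x;1,t)$ as a Schur-positive sum over highest weights of the subgraphs $B(D)\subset B(1)^n$ graded by $\maj_{\lambda'}$; it then collapses the sum over $D$ to a single sum over $\mathbb Y(B(1)^n,\cdot)$, and finally transports everything to the circloid side via $\maj(f_b)=\cocharge(\mathfrak f^{-1}(f_b))$ from~\eqref{inv2skip} and the crystal morphism $\Phi$. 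Both arguments rest on the same two facts (the isomorphism $\Phi$ and the constancy of $\maj$/$\cocharge$ on components); your approach simply stays on the circloid side throughout and avoids the intermediate descent-set stratification. The ``alternative'' you sketch at the end---matching Theorem~\ref{the:macq1} term by term with the $\mathcal B(\mu)$ decomposition---is essentially the paper's actual proof.
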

\begin{proof}
We have seen that each $b\in B(D)$ corresponds to a filling $f_b$ with $\maj_{\lambda'}(D)=\maj(f_b)$.
Theorem~\ref{the:macq1} gives that
$$
\tilde H_\lambda(x;1,t)=\sum_{D\subset[n-1]}\sum_{\mu\vdash n}\sum_{b\in\mathbb Y(B(D),\mu)} t^{\maj(f_b)}\,s_{\mu}({x})
=\sum_{\mu\vdash n}\sum_{b\in \mathbb Y(B(1)^n,\mu)}  t^{\maj(f_b)}\,s_{\mu}({x})\,.
$$
The claim follows by recalling  that $\maj(f_b)=\cocharge(\mathfrak f^{-1}(f_b))$ and that
$\Phi$ is a morphism of crystals.
\end{proof}

%\begin{proof}
%Consider a highest weight element $b\in B(1)^n$ and its associated statistic $\maj_\mu(\des(b))$.  
%Since $b$ is Yamanouchi if and only if the transpose $f'$ of $f_b$ is Yamanouchi, we have that
%$\hat T=\mathfrak c^{-1}(f')$ has prismatic increasing columns by Proposition~\ref{prop:yamacs}.
%However, $\maj(f')\neq \maj(f_b)$.
%
%
%Define a map $\mathfrak g$ on colored letters by
%\[ \mathfrak g(x_y)=(\mu_x-y+1)_x. \]
%Given a colored tabloid $T$, let
%$\mathcal E_i = \{g(x_y): x_y\text { lies in sector $i$ of $C$}\}$.
%Define the colored tabloid $\hat T=\mathfrak g(T)$ by placing
%entries of $\mathcal E_i$ increasing in  row $i$ 
%with respect to transposed prismatic order.
%In particular, the transpose of $\hat T$ is prismatic increasing in columns.
%
%
%Since $x_y \leq u_v$ if and only if $\mathfrak g(x_y) \leq' \mathfrak g(u_v)$,
%$T$ is prismatic increasing in columns if and only if $\mathfrak g(T)$ is transpose 
%prismatic increasing in columns.  
%
%Recall by Proposition~\ref{prop:yamacs} that $f=\mathfrak c(T)$ is Yamanouchi 
%if and only if $T$ is prismatic increasing in columns.  
%Further, $C$ is prismatic increasing in columns if and only if $g(C)$ is transpose prismatic increasing in columns.  
%However, the action of $g$ on $f$ is to conjugate and then reverse the columns.  
%Therefore $f$ is Yamanouchi if and only if $\mathfrak f(g(C))$ is column Yamanouchi.
%\end{proof}

From this, it is not difficult to rederive Macdonald's formula
taken over standard tableaux.

\begin{corollary}\cite{Macbook}
For any partition $\mu\vdash n$,
$$
\tilde H_\mu({x};1,t)
= \sum_{T\in\SSYT(\cdot,1^n)}
\prod_{i=1}^{\mu_1-1}t^{\cocharge(T_i)}\,
s_{\shape(T)}({x}) \,
$$
where $T_i$ is the subtableau of $T$ restricted to letters in $[\mu'_i+1,\mu'_{i+1}]$.
\end{corollary}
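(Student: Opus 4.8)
The plan is to descend from the immediately preceding corollary,
$$
\tilde H_\mu(x;1,t)=\sum_{T\in\SSCT(\cdot,\mu)}t^{\cocharge(T)}\,s_{\shape(T)}(x),
$$
to a sum over ordinary standard Young tableaux by \emph{standardizing the colors} of each $T\in\SSCT(\cdot,\mu)$, and then tracking how $\cocharge$ refines under that standardization.

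First I would record two facts about $\cocharge$. Since $\iota^{-1}$ sends a colored tabloid to the circloid whose $r\th$ sector is the \emph{set} of colored letters in row $r$ (arranged in decreasing prismatic order), the quantity $\cocharge(T)=\cocharge(\iota^{-1}T)$ does not depend on the internal order of the rows of $T$; so I may rearrange each row, in particular into $<'$-increasing order, without changing it. Second, because $\mu$ is a partition, $\cocharge(\iota^{-1}T)=\sum_{i\ge1}\cocharge(\Ccolor{i})$, and the color class $\Ccolor{i}$ carries exactly one copy of each of the letters $1,\dots,\mu'_i$ — letter $x$ wears color $i$ precisely when $\mu_x\ge i$, i.e. when $x\le\mu'_i$ — so $\Ccolor{i}$ is a \emph{standard} circloid on $\mu'_i$ letters, and its cocharge depends only on the relative order of those letters.

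Next I would set up the standardization. Listing the $n$ colored letters occurring in any weight-$\mu$ object in increasing $<'$-order, and using that $<'$ is color-major with color $i$ occurring $\mu'_i$ times, one sees that the color-$i$ letters receive exactly the $i\th$ consecutive block of $\{1,\dots,n\}$, of size $\mu'_i$, and that within color $i$ this relabeling sends letter $x$ to $|\mu'_{<i}|+x$. Applying it cellwise to $T$ (after $<'$-sorting its rows) yields a filling $\widehat T$ of $\shape(T)$ by $1,\dots,n$; since the relabeling is an order isomorphism from $<'$ onto the usual order, $\widehat T$ is strictly increasing along rows and up columns, i.e. $\widehat T\in\SSYT(\shape(T),1^n)$, and $T\mapsto\widehat T$ is a bijection onto $\SSYT(\nu,1^n)$, with inverse recoloring the entry $v$ by the $v\th$ colored letter in $<'$-order. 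By construction the restriction $\widehat T_i$ of $\widehat T$ to its $i\th$ block of letters is, after subtracting $|\mu'_{<i}|$, precisely the standardization of $\Ccolor{i}$ under $\iota$ and the reading order, so $\cocharge(\widehat T_i)=\cocharge(\Ccolor{i})$. Combining this with the previous paragraph,
$$
\cocharge(T)=\sum_{i\ge1}\cocharge(\Ccolor{i})=\sum_{i\ge1}\cocharge(\widehat T_i),
$$
and substituting into the preceding corollary and reindexing over $\widehat T\in\SSYT(\cdot,1^n)$ produces exactly the asserted formula, with $T_i$ the subtableau on the letters of the $i\th$ block.

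The step that needs care — and that I expect to be the genuine obstacle — is the claim that $T\mapsto\widehat T$ is a well-defined bijection onto \emph{all} of $\SSYT(\nu,1^n)$. The difficulty is that $\SSCT$ imposes two apparently incompatible orders, prismatic increase along rows and $<'$ increase up columns, so a cellwise $<'$-relabeling is a priori only guaranteed to behave well on columns. This is exactly where the highest-weight analysis of the circloid crystal $\mathcal B(\gamma)$ must be invoked: a colored tabloid is a highest weight if and only if, after $<'$-sorting its rows, its columns remain $<'$-increasing, so on the highest-weight locus the mixed condition and the pure-$<'$ condition agree; and conversely the recoloring of an arbitrary standard tableau must be checked to land among the highest weights. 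Establishing this equivalence (and the compatibility of $\iota$ with the reading word in the identification $\widehat T_i\leftrightarrow\Ccolor{i}$) is the one part that is not pure bookkeeping — everything else unwinds directly from the definitions of $\iota$, $\Ccolor{i}$, $\cocharge$, and $<'$, together with the already-noted fact that the cocharge of a word of partition weight is the sum of the cocharges of its standard subwords.
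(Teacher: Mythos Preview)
Your approach is essentially the paper's: replace each colored letter $x_i$ by $x+\sum_{j<i}\mu'_j$ (i.e.\ standardize under $<'$), land in $\SSYT(\cdot,1^n)$, and use $\cocharge(C)=\sum_i\cocharge(\Ccolor{i})$ to identify the exponent with $\sum_i\cocharge(\widehat T_i)$. The paper's proof is a terse three-line version of exactly this map and this decomposition; you have simply been more explicit, and you correctly isolate the one genuine step --- that the relabeling really gives a bijection onto all of $\SSYT(\nu,1^n)$ --- as resting on the highest-weight description already established in the theorem's proof (``rearranging the rows of $T$ so that they are co-prismatically non-decreasing results in co-prismatic increasing columns''), which the paper invokes implicitly.
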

\begin{proof}
Give a prismatic column increasing  circloid $C\in\C(\cdot,\mu)$,
replace each entry $i_c$ of $C$ with letter $i+\sum_{j<c}\mu_j'$.
The condition on $C$ that $x<u$ or $x=u$ and $y<v$ 
for any $x_y$ above $u_v$ implies that letters 
are strictly increasing in columns of the tabloid $T$.
Since the computation of cocharge on a circloid independently
calculates cocharge on standard subwords of a given color,
$\mu-\cocharge(T)=\cocharge(C)$.
\end{proof}

%\jennifer{we have an issue using wt versus weight.  I think wt should only be used for the map
%on elements in a crystal and weight should be used for weight of words, tableaux, etc.  Do you agree?
%If so, will you check that we have always used weight when appropriate?
%Also, we consider strict compositions as the shape of a circloid.  Are we specifying strict when 
%necessary?  This is another possible ambiguity that needs to be made more clear.}

\subsection{Double crystal structure}

Characterization of the doubly graded irreducible decomposition of Garsia-Haiman modules presents major obstacles.  
Although the identification of fillings with elements of $B(1)^n$ given by column reading yields 
connected components constant on the maj-statistic, it is incompatible with the inversion triples.
Even the subset of vertices with zero inversion triples is not a connected component.
The crystal cannot be applied to~\eqref{jimformula}, even when $q=0$, to gain insight on bi-graded
decomposition of $\mathcal R_\mu(x,y)$ into its irreducible components.
However, a double crystal structure using dual Knuth relations (jeu-de-taquin) and $\tilde e_i,\tilde f_i$ operators
on colored tabloids can be applied to the Garsia-Procesi modules.
Double crystal structures on $B(\mu_1)\otimes\cdots \otimes B(\mu_\ell)$ 
have been studied in various contexts~\cite{VanL,Sdummy,Ldouble}, but without
regard to graded modules.

For any composition $\gamma$ of length $\ell$, we consider a crystal
$\mathcal B^\dagger(\gamma)$ on vertices $\CT(\cdot,\gamma)$ which is dual to $\mathcal B(\gamma)$.
An $i$-colored edge is prescribed by a {sliding operation} defined on an inflation of rows $i$ and $i+1$ in
a colored tabloid.
The {\it $i$-inflation} of a vertex $b\in \mathcal B^\dagger(\gamma)$ is defined by 
spacing out the colored letters in row $i$ of $b$ while preserving their relative order as follows:
entries $e$ are taken from west to east from row $i$ of $b$ and placed in the leftmost empty cell 
of row $i$ without an entry $e'<e$ directly above it.  
The operator $e_i^\dagger$ on $b\in \mathcal B^\dagger$ is then defined by 
a jeu-de-taquin {\it sliding} action whereby the largest entry of row $i+1$ 
in the $i$-inflation of $b$ which lies immediately above an empty cell is swapped 
with this empty cell, after which all empty cells are removed.
When no empty cell lies in row $i$, $e_i^\dagger(b)=0$.

It is convenient to define the {\it inflation} a vertex $b\in \mathcal B^\dagger(\gamma)$
as the punctured colored tabloid obtained by inflating rows of $b$ in succession from top
to bottom.  Note that the inflation of $b$ has entries (prismatic order) increasing up columns.  
%Edges are prescribed by a {sliding operation} defined on inflated colored tabloids.
%The {\it inflation} of a vertex $b\in \mathcal B^\dagger(\gamma)$ is a punctured colored tabloid with entries 
%(prismatic order) increasing up columns.  It is defined by a new placement 
%of colored letters from row $r$ of $b$ into row $r$ of $\mathbb N\times \mathbb N$.  
%The top row $\ell$ is simply the top row of $b$.  Squares of row $r=\ell-1$ are then filled with entries 
%from row $r$ of $b$, in the same relative order, as follows; entries $e$ are taken from west to east from row $r$ of $b$ 
%and placed in the leftmost empty cell of row $r$ without an entry $e'<e$ directly above it.
%Iterate, placing letters from row $r-1$ of $b$ into row $r-1$.
%Define the $i$-inflation of a colored tabloid by inflating only the $i\th$ row with respect to row $i+1$.
\begin{example}
The $2$-inflation of 
$T=\tableau[scY]{1_1,2_3,3_2|1_2,2_1,3_3|1_3,2_2,3_1}\,$ is
$\,\tableau[scY]{1_1,2_3,3_2|1_2,,2_1,3_3|1_3,2_2,3_1}.\, $
The inflation of $T$ is $\, \tableau[scY]{1_1,2_3,3_2|1_2,,2_1,3_3|1_3,,2_2,,3_1}$.
\end{example}

%\begin{remark}  The $e_i^\dagger$ operators are defined similarly on the inflation of $b$;
%if an empty cell is introduced to the top row, simply slide it to the right until it is out of the diagram.  This
%can always be done without changing the shape of the rest of the rows because the entries to the right of
%the empty cell must be strictly larger than the moved entry, otherwise the moved entry would not have had
%an empty cell immediately beneath.
%%\jennifer{I'm not following this?}
%\end{remark}

\begin{definition}
For any $\gamma\models n$, let $\mathcal B^\dagger(\gamma)$ be the graph on vertices
$\CT(\cdot,\gamma)$ with a directed, $i$-colored edge from $b\to b'$ when $e_i^\dagger(b)=b'$.
\end{definition}

We will establish that $\mathcal B^\dagger(\gamma)$ is a crystal graph doubly related to $B(1)^n=B(1)\otimes\cdots\otimes B(1)$
by the companion map.  For each $\gamma\models n$, define the map $\mathfrak c_\gamma$ on 
$B(1)^n$ by
$$
\mathfrak c_\gamma (b) = \mathfrak c( f_b)\,,
$$
where $f_b$ is the unique filling of shape $\gamma$ whose reading word
is $b$.

\begin{theorem}
\label{the:daggerops}
For each $\gamma\models n$, $\mathfrak c_\gamma$ is a crystal isomorphism 
$$
B(1)\otimes\cdots\otimes B(1)\cong \mathcal B^\dagger(\gamma)
\,.  
$$
\label{prop:hwt}
The highest weights in $\mathcal B^\dagger(\gamma)$ of weight $\mu\vdash n$ are
$$
\mathbb Y(\mathcal B^\dagger(\gamma),\mu) = \{T\in \CT(\mu,\gamma): \text{entries of $T$ prismatically increase up columns}\}\,.
$$
\end{theorem}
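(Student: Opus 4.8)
The goal is to prove Theorem~\ref{the:daggerops}: that $\mathfrak c_\gamma$ is a crystal isomorphism $B(1)^{\otimes n}\cong\mathcal B^\dagger(\gamma)$, and to identify the highest weights. The natural strategy is to verify the three defining properties of a crystal morphism (commutation with $\tilde e_i$ and $\tilde f_i$, and weight preservation) for the bijection $\mathfrak c_\gamma=\mathfrak c\circ(\text{read-word})^{-1}$, and then deduce that $\mathcal B^\dagger(\gamma)$ is a crystal graph as a formal consequence of being the image of one under a weight-preserving, operator-intertwining bijection. The key reduction is that $\mathfrak c=\iota\circ\mathfrak f^{-1}$ is already a bijection $\F(\gamma,\cdot)\to\CT(\cdot,\gamma)$ by Definition~\ref{def:companion}, and reading words give a bijection $B(1)^{\otimes n}\to\F(\gamma,\cdot)$; so $\mathfrak c_\gamma$ is a bijection, and only the equivariance under the Kashiwara operators needs work.

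\textbf{The main step} is to show that the tabloid operator $e_i^\dagger$ (defined via the $i$-inflation and jeu-de-taquin slide) corresponds under $\mathfrak c_\gamma$ exactly to $\tilde e_i$ on words. First I would unwind what $\tilde e_i$ does on a word $b\in B(1)^{\otimes n}$ in terms of its filling $f_b\in\F(\gamma,\cdot)$: $\tilde e_i$ changes one entry from $i+1$ to $i$, the entry selected by the Lascoux--Sch\"utzenberger parenthesization of the subword $b_{\{i,i+1\}}$. Then I would trace this through $\mathfrak f^{-1}$ followed by $\iota$: under $\mathfrak c$, an entry $e$ in cell $(r,c)$ of $f_b$ becomes the colored letter $r_c$ placed in row $e$ of the tabloid. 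So changing an $i+1$ in cell $(r,c)$ to an $i$ corresponds to removing the colored letter $r_c$ from row $i+1$ of $T=\mathfrak c_\gamma(b)$ and inserting it into row $i$. The content of the argument is to match the parenthesization rule on $b_{\{i,i+1\}}$ with the pairing implicit in the inflation-and-slide definition of $e_i^\dagger$: the $i$-inflation spaces out row $i$ so that each row-$i$ cell sits under a strictly smaller entry (mimicking ``matched parentheses''), the row-$(i+1)$ entries sitting above empty cells are the unpaired $i+1$'s, and the jeu-de-taquin slide of the largest such entry into an empty cell is precisely moving the unpaired $i+1$ flagged by $\tilde e_i$. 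I expect the care here is in checking that ``largest in row $i+1$ above an empty cell'' (in prismatic order on colored letters, hence by row index $r$) coincides with ``the $i+1$ that $\tilde e_i$ changes,'' which forces me to verify that the column-reading order used to recover $b$ from $f_b$ lines up the parenthesization on the word with the geometric left-to-right pairing in the inflation. A symmetric check handles $f_i^\dagger$ versus $\tilde f_i$, or one invokes the involutive relation $\tilde e_i a=b\iff\tilde f_i b=a$ once the $e_i^\dagger$ side is settled.

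\textbf{Weight preservation} is immediate: $\wt$ on $\mathcal B^\dagger(\gamma)$ sends a colored tabloid to its shape (equivalently the shape of the underlying circloid), and by the $\mathfrak f$-correspondence of~\eqref{codomainf} this shape is the weight of the filling $f_b$, which is the weight $x^{\wt(b)}$ of the word $b$. Finally, for the \textbf{highest weights}: $b$ is killed by every $\tilde e_i$ iff $b$ is Yamanouchi of its weight $\mu$ (so $\mu$ is a partition), by~\eqref{hwt}. Translating through $\mathfrak c_\gamma$: $\tilde e_i(b)=0$ means every $i+1$ in $b_{\{i,i+1\}}$ is paired, which under the companion dictionary says every colored letter $r_c$ in row $i+1$ of $T$ has, immediately below it in row $i$, a colored letter smaller in prismatic order --- i.e.\ after co-prismatically (here prismatically, via the row arrangement) sorting, the columns of $T$ increase up columns in prismatic order. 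This is exactly the condition defining the claimed set $\{T\in\CT(\mu,\gamma):\text{entries prismatically increase up columns}\}$, and the argument that $\mu$ must be a partition mirrors the one already given for $\mathcal B(\gamma)$: an unpaired $i+1$ exists whenever sector/row $i+1$ is longer than row $i$. The only genuine obstacle, then, is the bookkeeping in the main step --- establishing that the inflation-plus-slide definition of $e_i^\dagger$ reproduces the Lascoux--Sch\"utzenberger parenthesization; everything else follows formally from Definition~\ref{def:companion} and the crystal axioms.
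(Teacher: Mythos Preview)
Your plan is correct and matches the paper's proof almost exactly: both reduce to the key claim that an entry $x_y$ in row $i+1$ of the $i$-inflation lies above an empty cell if and only if the $i+1$ in cell $(x,y)$ of $f_b$ is unpaired in the Lascoux--Sch\"utzenberger parenthesization, after which sliding the largest such $x_y$ down corresponds to changing the leftmost unpaired $i+1$, and the highest-weight description follows from Proposition~\ref{prop:yamacs}(1). One slip to fix when you write it up: $b$ is recovered from $f_b$ via the paper's \emph{row} reading word, not a column reading---this is precisely what makes the prismatic order on colored letters $r_c$ agree with position in $b$, so that ``largest $x_y$ above an empty cell'' lines up with ``leftmost unpaired $i+1$.''
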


\begin{proof}
Since the image of the companion bijection $\mathfrak c$ on $\F(\gamma,\cdot)$ is the set of vertices in $\mathcal B^\dagger(\gamma)$,
the map $\mathfrak c_\gamma$ is a bijection between $B(1)\otimes\cdots\otimes B(1)$
and the vertices of $\mathcal B^\dagger(\gamma)$.  To check that edges match,
we need to prove that $\tilde e_i(b)=b'$ if and only if
$\tilde e_i^\dagger(\mathfrak c_\gamma(b))= \mathfrak c_\gamma(b')$,
for each $b\in B(1)\otimes\cdots\otimes B(1)$.

We will show that an entry $x_y$ in row $i+1$ of the $i$-inflation of $\mathfrak c_\gamma(b)$ has an empty cell under it if and only if the corresponding $i+1$ in cell $(x,y)$ of $b$ is unpaired.  The proof then follows because sliding the rightmost  $x_y$ down to row $i$ is the equivalent of changing the leftmost unpaired $i+1$ to an $i$ in $b$.

Suppose that $x_y$ lies above an empty cell in the $i$-inflation of $\mathfrak c_\gamma(b)$.  Then for each $u_v<x_y$ in row $i$ the entry $u'_{v'}$ immediately above in row $i+1$ satisfies $u_v<u'_{v'}<x_y$.  The entry $x_y$ corresponds
to an $i+1$ in cell $(x,y)$ of $b$, and for every $i$ appearing afterward, there is a distinct $i+1$ appearing between it and cell $(x,y)$.  Therefore the $i+1$ in cell $(x,y)$ will be unpaired.

Suppose that an $i+1$ in cell $(x,y)$ of $b$ is unpaired.  Then every $i$ appearing afterward is paired with an $i+1$ that appears between it and cell $(x,y)$  Therefore, for each $u_v$ in row $i$ of $\mathfrak c_\gamma(b)$ with $u_v<x_y$, there is a unique $u'_{v'}$ in row $i+1$ such that $u_v<u'_{v'}<x_y$.  Thus we are guaranteed that there is an empty cell under $x_y$ when we $i$-inflate $\mathfrak c_\gamma(b)$.

The highest weights of $B(1)^n$ are defined by the Yamanouchi property 
and thus their companions are prismatic column increasing by Proposition~\ref{prop:yamacs}.
Alternatively, $e_i^\dagger$ anhilates a colored tabloid $T$ when there is no entry in row $i+1$ of
the inflation of $T$ above an empty square in row $i$.  In particular, $T$ is its own inflation and
thus has prismatic increasing columns.
\end{proof}

\subsection{Garsia-Procesi modules}

As a first application, we show how the graded irreducible decomposition of a Garsia-Procesi module is readily apparent 
in the crystal $\mathcal B^\dagger$.  For this, we need only the induced subposet $\mathcal B_0^\dagger(\gamma)$ 
on the restricted set of reverse-colored vertices in the crystal graph $\mathcal B^\dagger(\gamma)$.

\begin{prop}
\label{prop:hwtss}
For a composition $\gamma$ of length $\ell$,
the companion map is a crystal isomorphism
$$
\mathcal B_0^\dagger(\gamma)\cong B(\gamma_1)\otimes\cdots\otimes B(\gamma_\ell)\,.
$$
The highest weights of $\mathcal B_0^\dagger(\gamma)$ are (reverse-colored) semi-standard tableaux
of weight $\gamma$.
\end{prop}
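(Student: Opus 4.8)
The plan is to leverage Theorem~\ref{the:daggerops}, which already identifies $\mathcal B^\dagger(\gamma)$ with $B(1)^{\otimes n}$ via $\mathfrak c_\gamma$, and to show that $\mathfrak c_\gamma$ restricts to the desired isomorphism on the reverse-colored subgraph. First I would invoke Proposition~\ref{prop:revcs}: a colored tabloid $T\in\CT(\cdot,\gamma)$ is reverse colored precisely when the filling $\mathfrak c^{-1}(T)=f$ is a \emph{tabloid}, i.e.\ is weakly increasing along rows from west to east. Translating this through the identification $b\leftrightarrow f_b$ (where $b$ is the column reading word of $f$, read down columns right-to-left as in the proof of Theorem~\ref{the:daggerops}), the fillings of shape $\gamma$ that are row-weakly-increasing correspond exactly to words $b\in B(1)^{\otimes n}$ that are non-decreasing on positions $1,\ldots,\gamma_1$, then on positions $\gamma_1+1,\ldots,\gamma_1+\gamma_2$, and so forth---because the column-reading word lists the entries of each row in order. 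As noted in the excerpt just before \S\ref{circloidcrystal}, this is exactly the vertex set of the tensor product crystal $B(\gamma_1)\otimes\cdots\otimes B(\gamma_\ell)$, since $B(\gamma_i)$ is the no-descent subgraph of $B(1)^{\otimes\gamma_i}$.

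Next I would check that this vertex-set bijection is in fact a morphism of crystals, i.e.\ that $\mathcal B_0^\dagger(\gamma)$ is a full subcrystal: one must verify the set of reverse-colored tabloids is \emph{closed} under the operators $\tilde e_i^\dagger,\tilde f_i^\dagger$ (equivalently, that the non-decreasing-on-blocks condition on $b$ is preserved by $\tilde e_i,\tilde f_i$). Under the companion isomorphism this is exactly the statement that the crystal operators on $B(1)^{\otimes n}$ preserve the subset of words that are non-decreasing within each of the $\ell$ blocks---which is true because the Lascoux--Sch\"utzenberger parenthesis rule changes a single $i$ to $i+1$ (or vice versa) in a position chosen by the matching, and one checks that within a maximal weakly increasing run of $i$'s and $i+1$'s (the only place a change can occur, by Remark~\ref{re:conmaj}-type reasoning) the changed letter is always at the appropriate end so monotonicity on the block is maintained. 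Alternatively, and more cleanly, one can argue directly on the tabloid side: the $i$-inflation and jeu-de-taquin sliding operation of $e_i^\dagger$ moves the \emph{rightmost} suitable entry of row $i+1$ down into row $i$, and since reverse coloring is the unique coloring compatible with a tabloid (Remark~\ref{re:manifest}), the output is again reverse colored provided it is again a tabloid, which follows from the sliding move respecting row order.

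With closure established, the edge-matching is immediate: $\mathfrak c_\gamma$ is already a crystal isomorphism on the ambient crystals by Theorem~\ref{the:daggerops}, and restricting a crystal isomorphism to a full subcrystal on one side whose image is a full subcrystal on the other side yields a crystal isomorphism $\mathcal B_0^\dagger(\gamma)\cong B(\gamma_1)\otimes\cdots\otimes B(\gamma_\ell)$. Finally, for the highest-weight statement I would combine two facts: the highest weights of $B(\gamma_1)\otimes\cdots\otimes B(\gamma_\ell)$ are the Yamanouchi words non-decreasing on each block (from the discussion preceding \S\ref{circloidcrystal}), and their companions are, by Proposition~\ref{prop:yamacs}(1) together with Proposition~\ref{prop:revcs}, exactly the reverse-colored colored tabloids with \emph{prismatic}-increasing columns---but a reverse-colored tabloid with prismatic-increasing columns is precisely a semi-standard tableau filling $P$ re-presented as its companion, via Corollary~\ref{cor:revcompanion}. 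So the highest weights of $\mathcal B_0^\dagger(\gamma)$ are the reverse-colored semi-standard tableaux of weight $\gamma$, as claimed.

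The main obstacle I anticipate is the closure step---verifying cleanly that the reverse-colored subset is stable under $e_i^\dagger,f_i^\dagger$. On the word side this is a short parenthesis-matching argument, but one has to be careful that the block structure (the division of positions into runs of lengths $\gamma_1,\ldots,\gamma_\ell$) interacts correctly with the matching, since a priori the matched pair straddling which the change occurs could involve letters in different blocks; the point is that a change within a block keeps that block non-decreasing because the changed letter sits at the boundary of the $i^p(i+1)^q$ window, and non-decreasingness across blocks is not required. Getting this bookkeeping right is the only genuinely non-formal part; everything else is assembled from Propositions~\ref{prop:yamacs}, \ref{prop:revcs} and Theorem~\ref{the:daggerops}.
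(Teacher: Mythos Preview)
Your approach is essentially the paper's: identify the reverse-colored vertices with tabloids via Proposition~\ref{prop:revcs}, prove closure under $\tilde e_i^\dagger$, restrict the ambient isomorphism of Theorem~\ref{the:daggerops}, and for highest weights intersect ``prismatic column-increasing'' with ``reverse-colored'' to force letter-increasing columns (i.e.\ semi-standard). One correction: the map $\mathfrak c_\gamma$ is defined via the ordinary \emph{row} reading word, not a column reading as you state---you have conflated it with the morphism $\Phi$ from the proof of the earlier (non-dagger) crystal theorem; with row reading your conclusion that $f_b$ is a tabloid iff $b$ is block-non-decreasing is of course still correct. For the closure step the paper argues directly on the tabloid side rather than via parenthesis-matching on words: if $x_y$ in row $i+1$ sits above an empty cell in the $i$-inflation, then by construction of the inflation $x_{y-1}$ cannot be in row $i$, so sliding $x_y$ down preserves reverse coloring. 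This one-line observation replaces the block-monotonicity bookkeeping you anticipated as the ``main obstacle.''
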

\begin{proof}
Given $b\in \mathcal B_0^\dagger$, consider $b'=\tilde e_i^\dagger(b)$
If $b'\neq 0$, there is a (largest) unpaired $x_y$ in row $i+1$ of the 
inflation ${\bf b}$ above an empty cell.  The definition of inflation
thus implies $x_{y-1}$ cannot lie in row $i$.  Therefore, the image of a 
reverse colored tabloid under the crystal action remains as such
since the action merely slides $x_y$ into row $i$.
That is, each connected component in $\mathcal B_0^\dagger$ is a connected component of
$\mathcal B^\dagger$.
We then note that the set of vertices $\mathcal B_0^\dagger$  is in bijection with 
$B(\gamma_1)\times\cdots\times B(\gamma_\ell)$
since reverse-colored tabloids of weight $\gamma$ are the companion images of 
tabloids with shape $\gamma$ by Proposition~\ref{prop:revcs}. 
In turn, $B(\gamma_1)\times\cdots\times B(\gamma_\ell)$ are defined as induced
subposets of $B(1)^n$ allowing us to apply Theorem~\ref{prop:hwt} to establish the
isomorphism.

The highest weights of $B^\dagger(\gamma)$ are the colored tabloids with 
prismatically increasing columns  by Theorem~\ref{prop:hwt}. 
Thus, a highest weight element $b$ has entries
$x_y$ above $x_{y'}$ in the same column only when $y<y'$.  However, if $b$ is also
reverse colored, then $y>y'$ and therefore its {\it letters} increase up columns.
\end{proof}

Define the {\it faithful recoloring} of $b\in \mathcal B^\dagger$ to be the colored tabloid $b_0$ obtained by
stripping $b$ of its colors and then faithfully coloring its letters.
%Note that $\cocharge(b_0)$ can be computed by ignoring colors of $b$ and taking the Lascoux-Sch\"utzenberger cocharge.

\begin{lemma}
\label{lem:eioncocharge}
For $\mu\vdash n$ and $b\in \mathcal B^\dagger(\mu)$ with the property that 
letters increase up columns of the inflation of $b$,
$$
\cocharge(b_0)=\cocharge((b')_{0})\quad
\text{for $\;b'=\tilde e_i^\dagger(b)\neq 0$}.
$$
\end{lemma}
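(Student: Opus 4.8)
The plan is to track what the crystal operator $\tilde e_i^\dagger$ does to the faithful recoloring. The central observation is that $\tilde e_i^\dagger$ acts \emph{locally}: by definition it slides a single entry $x_y$ from row $i+1$ down into row $i$ of the inflation, leaving all other rows untouched. Passing to the faithful recoloring $b_0$, stripping colors and refaithfully coloring only re-indexes how standard subwords are extracted, so I would first argue that the faithful recoloring of $\tilde e_i^\dagger(b)$ is obtained from that of $b$ by moving exactly one letter $x$ from row $i+1$ to row $i$ (after re-running the faithful coloring algorithm). Because cocharge on a faithfully colored circloid equals the cocharge of the underlying uncolored word (Remark~\ref{re:uncoloredcharge}), it suffices to show that this single-letter move does not change the cocharge of the underlying word, i.e.\ that passing $x$ from just above to just below in the relevant column of the tabloid is a cocharge-preserving move on the associated circloid/word.

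\textbf{Key steps.} First I would make precise the ``letters increase up columns of the inflation'' hypothesis: this is exactly the condition (via Theorem~\ref{the:daggerops}/Proposition~\ref{prop:hwtss}-type reasoning, or directly) under which the word $b_0$ restricted to the two values straddling the slid letter behaves like a word coming from a semistandard column. Second, I would identify the slid letter $x_y$ in the $i$-inflation with the leftmost unpaired $i+1$ in the associated word under the companion correspondence, using the argument already carried out in the proof of Theorem~\ref{the:daggerops}. Third — the computational heart — I would show that in the circloid $C = \iota^{-1}(b_0)$, the faithful coloring assigns the slid letter a color $j$ determined entirely by the positions of the $(x-1)$'s and $x$'s relative to $\star$, and that after the slide the cocharge label attached to $x$ in the $j$th standard subword is unchanged: the slide changes which sector (row) $x$ sits in from $i+1$ to $i$, but the defining inequality ``$x_j$ lies in a sector strictly larger than the sector of $(x-1)_j$'' (the condition that contributes $\lambda'_j - x + 1$ to cocharge, reinterpreted as in the proof of Theorem~\ref{the:rjformula}) is preserved because the slide is precisely the move that keeps the relevant parenthesis unpaired. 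Crucially, no \emph{other} letter's cocharge label changes, because faithful coloring of the remaining entries is unaffected by moving $x$ between two adjacent rows of its own column, and cocharge is computed value-by-value and color-by-color.

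\textbf{Main obstacle.} The hard part will be the bookkeeping in the third step: showing the faithful coloring is ``stable'' under the slide, i.e.\ that the color $j$ received by the slid $x$ and the colors of all other letters are genuinely unchanged, not merely permuted. The subtlety is that faithful coloring is defined iteratively (``closest $x+1$ to $x_i$ gets color $i$''), so a priori moving one letter could cascade. I would handle this by arguing that the slid letter $x_y$ and its partner — an $i+1$ one row down in the inflation, corresponding under the companion map to an $x+1$ whose relative cyclic position with respect to the moved $x$ is unchanged — keep their color assignments, and that the ``letters increase up columns of the inflation'' hypothesis prevents any $x_{y-1}$ from occupying row $i$ (this is exactly the point invoked in the proof of Proposition~\ref{prop:hwtss}), which blocks a cascade. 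Once stability of faithful coloring is established, the equality $\cocharge(b_0) = \cocharge((b')_0)$ reduces to the single observation that the slide preserves the ``strictly larger sector'' condition for each color, which is immediate from the pairing description of $\tilde e_i^\dagger$.
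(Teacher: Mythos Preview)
Your framework matches the paper's --- reduce to showing that $b_0$ and $(b')_0$ differ in a single colored entry, then argue that moving that entry from row $i+1$ to row $i$ leaves its cocharge label unchanged --- but the resolution you propose for your ``main obstacle'' does not work, and that obstacle is exactly where the content of the proof lives.

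You assert that faithful recoloring is \emph{stable}: the slid letter keeps its color and every other letter keeps its color. This is false in general; moving one $x$ between sectors can reshuffle which copies of $x$ receive which faithful color. The paper handles this with a two-case analysis. With $d$ the smallest faithful color on an $x$ in row $i+1$ of $b_0$: if $(x-1)_d$ is not in row $i$, then indeed $b_0$ and $(b')_0$ differ only at $x_d$ (your intuition is correct here). But if $(x-1)_d$ \emph{is} in row $i$, one lets $D$ (resp.\ $E$) be the set of faithful colors on the $x$'s in row $i+1$ (resp.\ on the $(x-1)$'s in row $i$) of $b_0$ and takes $e=\min(D\setminus E)$; the single differing entry between $b_0$ and $(b')_0$ is then $x_e$, not $x_d$. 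That $D\setminus E\neq\emptyset$ is precisely where the hypothesis ``letters increase up columns of the inflation'' enters: it forces strictly more $x$'s in row $i+1$ than $(x-1)$'s in row $i$, since an $x$ sits above an empty cell in the inflation. Your invocation of this hypothesis via ``$x_{y-1}$ cannot occupy row $i$, as in Proposition~\ref{prop:hwtss}'' conflates the original subscript $y$ with the faithful color; that argument is specific to reverse-colored tabloids and does not transfer. Finally, your last step --- that the cocharge label is preserved ``immediately from the pairing description of $\tilde e_i^\dagger$'' --- skips the point: that pairing lives in the original coloring, not the faithful one. The actual reason is that in both cases the distinguished color ($d$ or $e$) is chosen so that $(x-1)$ with that color is \emph{not} in row $i$; hence sliding $x$ from row $i+1$ to row $i$ does not change whether it sits in a strictly larger sector than $(x-1)$ of its color, and the cocharge contribution is unchanged.
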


\begin{proof}
Consider $b\in \mathcal B^\dagger$ such that {letters} increase up columns in 
its inflation ${\bf b}$.  If $b'=\tilde e_i^\dagger(b)\neq 0$, then there is a largest entry $x_y$ 
in row $i+1$ of ${\bf b}$ which lies above an empty cell and it is moved into row $i$ to
obtain $b'$ from $b$.  Since {\it letters} must increase up columns of ${\bf b}$ and an $x$ in
row $i+1$ lies above an empty cell, there must be fewer $x-1$'s in row $i$ than there are $x$'s 
in row $i$ of $b$ by construction of ${\bf b}$.  For the number $n_x^i$ of letters smaller than $x$
in row $i$, $n_{x}^i>n_{x-1}^{i-1}$.

Let $d$ be the smallest color adorning an $x$ in row $i+1$ of $b_0$ and note that
there can be no $(x-1)_{d'}$ with $d'<d$ in row $i$ of $b_0$ by definition of 
faithful coloring.  If $(x-1)_{d}$ does not lie in row $i$ of $b_0$, then 
$b_0$ and $(b')_0$ are the same with the exception of one entry;
$x_{d}$ lies in row $i+1$ of $b_0$ and in row $i$ of $(b_0)'$.
Therefore, their cocharges equal since $(x-1)_{d}$ is not in row $i$ of either element.

Otherwise, $d\in D\cap E$ for the set $D$ of colors adorning letter
$x$ in row $i+1$ and the set $E$ of colors attached to $x-1$ in row $i$ of $b_0$.
In this case, $b_0$ and $(b_0)'$ again differ only by one entry; $x_{e}$ lies in row $i$
of $b_0$ and in row $i-1$ of $(b')_0$ for the smallest element $e\in D\backslash E$.
Therefore, their cocharges match since $(x-1)_e$ is not in row $i$.
\end{proof}

\begin{prop}
For $\mu\vdash n$, the graded decomposition of the Garsia-Procesi module $\mathcal R_\mu(y)$ into 
its irreducible components is encoded in the crystal graph $\mathcal B_0^\dagger(\mu)$ with
$\cocharge(b_0)$ attached to each vertex $b$.
\end{prop}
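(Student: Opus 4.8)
The plan is to identify the Frobenius image of $\mathcal R_\mu(y)$ with the $q=0$ specialization of the Macdonald polynomial $\tilde H_\mu(x;q,t)$ and then read off its Schur expansion from the crystal $\mathcal B_0^\dagger(\mu)$ with the cocharge grading attached. The starting point is the classical fact (Garsia--Procesi~\cite{GP}, Hotta--Springer) that the bigraded Frobenius image of $\mathcal R_\mu(y)$ is $\tilde H_\mu(x;0,t)$, so that the multiplicity of the irreducible $\chi^\lambda$ in the degree-$r$ piece is the coefficient of $t^r$ in $K_{\lambda\mu}(0,t)$. By~\eqref{kftableaux} this equals $\#\{P\in\SSYT(\lambda,\mu):\cocharge(P)=r\}$. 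The task is therefore purely combinatorial: show that the highest-weight vertices $b$ of $\mathcal B_0^\dagger(\mu)$ with $\wt(b)=x^\lambda$ and $\cocharge(b_0)=r$ are in bijection with $\SSYT(\lambda,\mu)$ graded by cocharge.

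First I would invoke Proposition~\ref{prop:hwtss}: the companion map gives a crystal isomorphism $\mathcal B_0^\dagger(\mu)\cong B(\mu_1)\otimes\cdots\otimes B(\mu_\ell)$, and the highest weights of $\mathcal B_0^\dagger(\mu)$ are precisely the reverse-colored semi-standard tableaux of weight $\mu$ — equivalently, by Remark~\ref{re:manifest}, they are a manifestation of ordinary semi-standard tableaux of weight $\mu$. So the highest-weight vertices of weight $x^\lambda$ biject with $\SSYT(\lambda,\mu)$, and each connected component of $\mathcal B_0^\dagger(\mu)$ (which is also a connected component of $\mathcal B^\dagger(\mu)$) contributes a single $s_\lambda$ to the Frobenius image. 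It remains only to check that the grading statistic matches: namely that if $b$ is a highest-weight vertex corresponding to $P\in\SSYT(\lambda,\mu)$, then $\cocharge(b_0)=\cocharge(P)$.

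Next I would handle that grading match. A highest-weight $b$ has letters increasing up columns (being a reverse-colored semistandard tableau), hence it coincides with its own inflation and a fortiori letters increase up columns of the inflation — exactly the hypothesis of Lemma~\ref{lem:eioncocharge}. That lemma shows $\cocharge(b_0)$ is constant along the $\tilde e_i^\dagger$-edges emanating from such $b$, but more to the point I want: since the faithful recoloring $b_0$ strips colors and refaithfully colors, and since a faithfully colored circloid has the same cocharge as that of its manifest tabloid by Remark~\ref{re:uncoloredcharge}, we get $\cocharge(b_0)=\cocharge(\word(P))=\cocharge(P)$. Combining, the right side of the desired decomposition statement is $\sum_{\lambda\vdash n}\sum_{P\in\SSYT(\lambda,\mu)}t^{\cocharge(P)}s_\lambda(x)=\sum_\lambda K_{\lambda\mu}(0,t)\,s_\lambda(x)=\tilde H_\mu(x;0,t)=F_{\mathrm{char}(\mathcal R_\mu(y))}(x;t)$, which is the claim.

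The main obstacle I anticipate is not the bijection (that is already in hand via Propositions~\ref{prop:revcs} and~\ref{prop:hwtss}) but verifying the compatibility of the crystal structure with the \emph{grading}: i.e.\ that $\cocharge(b_0)$ is genuinely constant on each connected component of $\mathcal B_0^\dagger(\mu)$, so that a whole component really does carry a well-defined degree and contributes $t^{\cocharge}s_\lambda$. The subtlety is that the $\tilde e_i^\dagger$ action must be shown to preserve the hypothesis ``letters increase up columns of the inflation'' as one moves \emph{away} from the highest weight — or, alternatively, one must argue directly that $\cocharge(b_0)=\cocharge$ of the highest weight of the component for \emph{every} $b$ in the component, using that the faithful recoloring $b_0$ depends only on the underlying letters and that the $\tilde e_i,\tilde f_i$ operators on words preserve cocharge of standard subwords in the appropriate sense (Remark~\ref{re:uncoloredcharge}). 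I would handle this by an induction along edges of $\mathcal B_0^\dagger(\mu)$ using Lemma~\ref{lem:eioncocharge}, after first checking its hypothesis propagates; once that is done, the representation-theoretic conclusion follows immediately from the Garsia--Procesi identification and Frobenius reciprocity~\eqref{the:frob}.
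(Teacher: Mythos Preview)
Your proposal is correct and follows essentially the same route as the paper: identify the Frobenius image of $\mathcal R_\mu(y)$ with $\tilde H_\mu(x;0,t)$, use Proposition~\ref{prop:hwtss} to match the highest weights of $\mathcal B_0^\dagger(\mu)$ with $\SSYT(\lambda,\mu)$, and invoke Lemma~\ref{lem:eioncocharge} for constancy of $\cocharge(b_0)$ on connected components. The obstacle you flag dissolves more simply than your proposed induction along edges: the paper observes directly that \emph{every} reverse-colored tabloid $b$ satisfies the hypothesis of Lemma~\ref{lem:eioncocharge}, since if $x_y$ sits in a higher row than $x_z$ then reverse coloring forces $y>z$, hence $x_y<x_z$ in prismatic order, so the two cannot share a column of the (prismatic-increasing) inflation and letters therefore increase up its columns---no propagation argument is needed.
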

\begin{proof}
The Frobenius image of the module $\mathcal R_\mu(y)$ is obtained by setting $q=0$ in 
our formula~\eqref{rjformula}.  We then apply $\iota$
to obtain 
\begin{equation}
\label{hallinB}
\tilde H_\mu(x;0,t)=\sum_{T}
 t^{\cocharge(T)}\,x^{\shape(T)}\,,
\end{equation}
over faithfully colored tabloids $T$ of weight $\mu$.
The map $b\mapsto b_0=T$ is a bijection between $\mathcal B_0^\dagger(\mu)$ and $\CT(\cdot,\mu)$
since reverse colored and faithfully colored tabloids are both manifestations of uncolored tabloids. 
This allows us to convert the previous identity to
\begin{equation}
\tilde H_\mu(x;0,t)
=\sum_{b\in \mathcal B_0^\dagger(\mu)} t^{\cocharge(b_0)}\,x^{\shape(b)}
=\sum_{b\in \mathcal B_0^\dagger(\mu)} t^{\cocharge(b_0)}\,x^{\wt(b)}
\,,
\end{equation}
recalling that the weight function on the crystal $\mathcal B_0^\dagger(\mu)$ sends $b\mapsto \shape(b)$.
Now, letters increase up columns in the inflation of a reverse colored tabloid $b$; 
if $x_y$ lies in a higher row than $x_z$, then $x_y<x_z$ and thus these entries cannot 
lie in the same column of the inflation of $b$.  This given, Lemma~\ref{lem:eioncocharge} implies that
attaching each vertex $b$ to ${\cocharge(b_0)}$ 
is a statistic which is constant on the connected components of $\mathcal B_0^\dagger(\mu)$.  Therefore,
\begin{equation}
\tilde H_\mu(x;0,t)
= \sum_{b\in \mathbb Y(\mathcal B_0^\dagger(\mu),\lambda)} t^{\cocharge(b_0)}\,s_{\lambda}(x)
= \sum_{T\in\SSYT(\lambda,\mu)} t^{\cocharge(T)}\,s_{\lambda}(x)\,,
\end{equation}
since Proposition~\ref{prop:hwtss} characterizes the highest weights by
(reverse colored) semi-standard tableaux.
%and noting that cocharge of a
%faithfully colored tabloid $b_0$ matches the Lascoux-Sch\"utzenberger cocharge 
%of its manifest uncolored tabloid.
\end{proof}

% this is H_{1^n} in terms of \mu-cocharge where \mu=(1^n)

\section{Further applications of the double crystal}

\subsection{Energy function on affine crystals}

Lenart and Schilling~\cite{LenSch} connect the $q=0$ Macdonald polynomials to a tensor product 
of Kashiwara-Nakashima single column crystals, producing a new statistic for computing 
the (negative of the) energy function on affine crystals.  The double crystal reveals that
their statistic is precisely the companion of cocharge.

For a partition $\lambda$ of length $\ell$
and $b\in B(\lambda_1)\otimes \cdots\otimes B(\lambda_\ell)$,
we define
$$
\zigmaj(b)=\sum_{w\in Z(b)} \maj(w)\,,
$$
where $Z(b)$ is a set of $\lambda_1$ words extracted from the letters of 
$b=(b_1,\ldots,b_\ell)$. 
The first word $w=w_{\lambda_1'}\cdots w_1$ is constructed by selecting $w_1$ 
to be the smallest entry in $b_1$.
Iteratively, $w_i$ is selected to be the smallest entry larger than $w_{i-1}$
in $b_i$ (breaking ties by taking the easternmost). If there is no larger entry 
available, the smallest entry in $b_i$ is selected instead.
The first word $w$ is fully constructed after a letter has been selected 
from $b_\ell$.  The remaining words of $Z(b)$ are constructed by
the same process, ignoring previously selected letters of $b$.

\begin{example}
For $b=(223,123,12)\in B(3)\otimes B(3)\otimes B(2)$,
the words in $Z(b)$ are
$Z(b)=\{132, 212, 23\}$ implying that $\zigmaj(b)=2+1+0$.
\end{example}

\begin{prop}
\label{prop:zmaj}
For partition $\lambda$ of length $\ell$ and $b\in B(\lambda_1)\otimes\cdots\otimes B(\lambda_\ell)$, 
$$
\zigmaj(b)= \cocharge(\mathfrak c(f_b)_0)\,,
$$
where $f_b$ is the unique $\lambda$-shaped tabloid with reading word $b$.
\end{prop}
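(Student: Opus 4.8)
The plan is to unwind both sides through the companion map and reduce the statement to a purely local comparison of how letters are selected. First I would fix $b=(b_1,\dots,b_\ell)\in B(\lambda_1)\otimes\cdots\otimes B(\lambda_\ell)$ and let $T=\mathfrak c(f_b)$, where $f_b$ is the $\lambda$-shaped tabloid with reading word $b$; note that $f_b$ is a genuine tabloid, so by Proposition~\ref{prop:revcs} the companion $T$ is reverse colored, and by Remark~\ref{re:manifest} its faithful recoloring $T_0=\mathfrak c(f_b)_0$ is the unique faithfully colored manifestation of the same underlying uncolored tabloid. Since $\cocharge$ depends only on the underlying tabloid together with the (canonical) faithful coloring, I may compute $\cocharge(T_0)$ directly from the faithful coloring recipe applied to $\mathfrak f^{-1}(f_b)$, using the column-wise reinterpretation of cocharge already exploited in the proof of Theorem~\ref{the:rjformula}: $\cocharge$ on a circloid of partition weight $\mu$ is $\sum_r L_r$ over colors, where $L_r$ records, for the color-$i$ standard subword, a contribution $\lambda_i'-r+1$ exactly when $r_i$ sits in a strictly later sector than $(r-1)_i$.

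Next I would translate the $\zigmaj$ construction into the same sector language. The word $w=w_{\lambda_1'}\cdots w_1$ of $Z(b)$ is built by the "smallest entry larger than the previous, else smallest" rule over the successive crystal factors $b_1,\dots,b_\ell$; the key observation is that $b_j$ is precisely the set of entries placed in column $j$ of $f_b$ reading top-to-bottom, i.e. the set of colored letters $\{r_j : r_j \in \mathfrak f^{-1}(f_b)\}$ lying in column $j$, ordered by row. I expect that the "pick the smallest entry larger than $w_{i-1}$, ties broken easternmost" rule on $Z(b)$ corresponds exactly to the faithful-coloring selection rule on $\mathfrak f^{-1}(f_b)$: the $k$-th word $w$ of $Z(b)$ is assembled from the colored letters of color $k$ under the faithful coloring, and the entry $w_i$ chosen from $b_i$ is the row index of the color-$k$ letter $w_i$ living in column $i$. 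Granting this dictionary, $\maj(w)$ — the sum over descents $w_i > w_{i-1}$ of $(\ell' - i + 1)$ where $\ell'$ is the word length — matches $\sum_r L_r$ for the color-$k$ subword, because a descent $w_i>w_{i-1}$ in $w$ says the color-$k$ letter in column $i$ has larger row index than the one in column $i-1$, which is exactly the condition that $r_{\text{col }i}$ lies in a later sector than $(r-1)_{\text{col }i-1}$ once one recalls that $\mathfrak f$ sends column-$i$ row-$r$ of $f_b$ to the letter $r_i$ in sector $f_b(r,i)$. Summing over all $\lambda_1$ words of $Z(b)$ then gives $\zigmaj(b)=\cocharge(T_0)$.

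The main obstacle, and the step I would spend real care on, is verifying that the "else, smallest entry" fallback in the $\zigmaj$ rule — invoked when no entry of $b_i$ exceeds $w_{i-1}$ — is faithfully mirrored by the faithful-coloring rule's behavior when the requisite larger letter is absent, and that the tie-breaking ("easternmost" in $\zigmaj$ versus the prismatic/co-prismatic ordering governing which letter receives a given color) lines up under $\mathfrak f$. This requires carefully tracking, for each color $k$, that exactly the same multiset of cells of $f_b$ is consumed by the two procedures at each stage, so that the induction producing the $k$-th word closes; the potential pitfall is an off-by-one in when a new word/standard subword is started versus when the column index wraps. Once that bookkeeping is pinned down, everything else is the routine translation via $\mathfrak f$ and $\iota$ already established in Theorem~\ref{the:rjformula} and Proposition~\ref{prop:revcs}, together with the column-major reading and Remark~\ref{re:uncoloredcharge} to identify $\cocharge$ of the faithfully colored circloid with that of its manifest tabloid.
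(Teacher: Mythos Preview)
Your strategy is exactly the paper's: identify the $k$-th word of $Z(b)$ with the color-$k$ standard subword of the faithfully colored companion, then match the $\maj$ contribution term by term with the per-color cocharge contribution from the proof of Theorem~\ref{the:rjformula}. The fallback rule in $\zigmaj$ is, as you suspect, nothing but the clockwise wrap-around past $\star$ in the faithful-coloring rule, and the tie-break is immaterial because equal entries of $b_j$ land in the same sector and leave the same multiset behind.

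There is, however, a genuine error in your dictionary that would derail the argument as written. You assert that $b_j$ is the set of entries in column $j$ of $f_b$; this is impossible on size grounds, since $b_j\in B(\lambda_j)$ has $\lambda_j$ letters while column $j$ of the $\lambda$-shaped tabloid has $\lambda_j'$ cells. The correct identification is that $b_j$ fills \emph{row} $j$ of $f_b$. Under the companion map an entry $e$ in cell $(j,c)$ of $f_b$ becomes the colored letter $j_c$ in row $e$ of $T$; hence the multiset of sectors containing letter $j$ in the circloid is exactly the multiset $b_j$. With this in hand, the faithful rule ``the closest $j{+}1$ clockwise from $j_k$ gets color $k$'' translates to ``take the smallest entry of $b_{j+1}$ exceeding $w_j$, else the smallest overall,'' which is the $Z(b)$ rule verbatim. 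Thus $w_j$ is the sector holding $j_k$, and $w_{j+1}>w_j$ is precisely the condition that $(j{+}1)_k$ sits in a strictly later sector than $j_k$, contributing $\lambda_k'-j$ to both $\maj(w)$ and $\cocharge$. Your phrasing in terms of ``the color-$k$ letter living in column $i$'' conflates the faithful color $k$ with the reverse color (which \emph{is} the column index); once you separate these two colorings and fix row versus column, the rest of your outline goes through and coincides with the paper's proof.
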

\begin{proof}
For $b\in B(\lambda_1)\otimes\cdots\otimes B(\lambda_\ell)$, 
consider first the case that the tabloid $f_b$ contains an $x$ in row $i$ 
and letter $z>x$ in row $i+1$.  For the colored tabloid $T=\mathfrak c(f_b)$,
there is an $i$ in row $x$ of $T$ and and $z$ is the lowest row above $x$ with 
an $i+1$.  If $f_b$ does not have any $z>x$ in row $i+1$, instead take $z\leq x$
to be the minimal entry in row $i+1$ and note that $z$ is the lowest row in $T$ 
containing an $i+1$.  
More generally, for the $i\th$ word $w=w_{\lambda_i'}\cdots w_1$ in $Z(b)$,
$w_j$ records the row containing $j_i$ in $T$.  Since
each $w_{j+1}>w_j$ contributes $\lambda_i'-j$ to the $\maj$
and each $j+1$ higher than $j$ contributes the same to $\cocharge(T)$
the claim follows.
\end{proof}

Lemmas~\ref{lem:eioncocharge} and Proposition~\ref{prop:zmaj} 
imply that connected components of 
the crystal graph $ B(\lambda_1)\otimes \cdots\otimes B(\lambda_\ell)$
are constant on $\zigmaj$.

\begin{cor}
\label{cor:zigcrystal}
For a partition $\lambda$ of length $\ell$ and $b\in  B(\lambda_1)\otimes \cdots\otimes B(\lambda_\ell)$,
$$
\zigmaj(b)=\zigmaj(\tilde e_i(b))\qquad \text{and} \qquad
\zigmaj(b)=\zigmaj(\tilde f_i(b))\,,$$ 
for any $i\in\{1,\ldots,n-1\}$.
\end{cor}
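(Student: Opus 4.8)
The plan is to transport the claim to the dual crystal $\mathcal B_0^\dagger(\lambda)$, where $\cocharge$ of the faithful recoloring is already controlled, and then pull it back along the companion isomorphism. First I would invoke Proposition~\ref{prop:hwtss}: the companion map realizes a crystal isomorphism $\mathcal B_0^\dagger(\lambda)\cong B(\lambda_1)\otimes\cdots\otimes B(\lambda_\ell)$, under which a vertex $b$ on the right corresponds to $T=\mathfrak c(f_b)\in\mathcal B_0^\dagger(\lambda)$, with $f_b$ the unique $\lambda$-shaped tabloid whose reading word is $b$ (this is a genuine tabloid since each tensor factor $B(\lambda_j)$ consists of non-decreasing words). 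Because $\mathfrak c_\lambda$ is a crystal morphism, the edge $\tilde e_i(b)=b'$ in $B(\lambda_1)\otimes\cdots\otimes B(\lambda_\ell)$ translates to $\tilde e_i^\dagger(T)=\mathfrak c(f_{b'})$ in $\mathcal B_0^\dagger(\lambda)$. Hence it suffices to show that $\cocharge$ of the faithful recoloring of a vertex of $\mathcal B_0^\dagger(\lambda)$ is unchanged by $\tilde e_i^\dagger$.

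Next I would apply Proposition~\ref{prop:zmaj}, which identifies $\zigmaj(b)=\cocharge(\mathfrak c(f_b)_0)$; that is, $\zigmaj$ of $b$ is precisely $\cocharge$ of the faithful recoloring of its companion. So the desired identity $\zigmaj(b)=\zigmaj(\tilde e_i(b))$ is exactly $\cocharge(T_0)=\cocharge\bigl((\tilde e_i^\dagger T)_0\bigr)$ for $T=\mathfrak c(f_b)$. This is the content of Lemma~\ref{lem:eioncocharge}, provided $\lambda$ is a partition (it is) and provided letters increase up columns of the \emph{inflation} of $T$. Supplying this last hypothesis uniformly over $\mathcal B_0^\dagger(\lambda)$ is the one point that is not a verbatim citation: since every vertex of $\mathcal B_0^\dagger(\lambda)$ is reverse colored, if $x_y$ lies in a higher row of $T$ than $x_z$ then $x_y<x_z$, so two equal letters never align in a column of the inflation of $T$, forcing letters to increase strictly up its columns. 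This is the observation already made in the proof of the Garsia--Procesi proposition, now applied to an arbitrary (not merely highest-weight) vertex.

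Finally, the $\tilde f_i$ assertion requires no separate argument: $\tilde e_i(a)=b\iff\tilde f_i(b)=a$, so constancy of $\zigmaj$ along each $i$-colored edge settles both operators at once, and ranging over all colors $i$ gives that $\zigmaj$ is constant on connected components, as the paragraph preceding the corollary asserts. The main obstacle I anticipate is not in any of these steps individually — each is a direct appeal to a result stated above — but rather in pinning down that the hypothesis of Lemma~\ref{lem:eioncocharge} genuinely holds at every vertex of $\mathcal B_0^\dagger(\lambda)$; that small structural fact about inflations of reverse-colored tabloids is the only place a (brief) argument is actually needed.
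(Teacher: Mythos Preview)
Your proposal is correct and follows essentially the same approach as the paper, which simply states that Lemma~\ref{lem:eioncocharge} and Proposition~\ref{prop:zmaj} together imply the corollary. You have faithfully unpacked that implication, including the one subsidiary verification (that letters increase up columns of the inflation of any reverse-colored tabloid) which the paper carries out in the proof of the Garsia--Procesi proposition rather than here.
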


\begin{theorem} 
\label{the:H2zigs}
For any partition $\lambda$,
$$
\tilde H_\lambda(x;0,t)=
\sum_{b\in B(\lambda_1)\otimes\cdots\otimes B(\lambda_\ell)
\atop
b~Yamanouchi } t^{\zigmaj(b)}\,s_{\weight(b)}({x})
\,.
$$
\end{theorem}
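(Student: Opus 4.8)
The plan is to assemble the formula from ingredients already in place, principally the $q=0$ specialization~\eqref{hallinB}, Proposition~\ref{prop:zmaj}, and Corollary~\ref{cor:zigcrystal}. Recall that~\eqref{hallinB} writes $\tilde H_\lambda(x;0,t)=\sum_T t^{\cocharge(T)}\,x^{\shape(T)}$, the sum over faithfully colored tabloids $T$ of weight $\lambda$. So it suffices to (i) reindex this sum by the vertices $b$ of $B(\lambda_1)\otimes\cdots\otimes B(\lambda_\ell)$, showing the summand becomes $t^{\zigmaj(b)}\,x^{\weight(b)}$, and then (ii) collapse the result onto its Yamanouchi (highest weight) vertices via the crystal structure.

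For step (i), I would trace through the following bijections. A vertex $b=(b_1,\dots,b_\ell)$ of $B(\lambda_1)\otimes\cdots\otimes B(\lambda_\ell)$ is exactly the datum of a shape-$\lambda$ tabloid $f_b$, its $i$-th row being the weakly increasing word $b_i$; viewing $f_b$ as a filling in $\F(\lambda,\weight(b))$, Proposition~\ref{prop:revcs} says its companion $\mathfrak c(f_b)$ is a reverse colored tabloid, and since $\mathfrak c:\F(\lambda,\weight(b))\to\CT(\weight(b),\lambda)$ interchanges shape and weight we have $\shape(\mathfrak c(f_b))=\weight(f_b)=\weight(b)$ while its weight is $\lambda$. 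Faithfully recoloring $\mathfrak c(f_b)$ (in the sense defined just before Lemma~\ref{lem:eioncocharge}) gives a faithfully colored tabloid $\mathfrak c(f_b)_0$ of weight $\lambda$ and the same shape $\weight(b)$; since reverse colored and faithfully colored tabloids are each manifestations of one underlying uncolored tabloid (Remarks~\ref{re:manifest}, \ref{re:uncoloredcharge}), the composite $b\mapsto \mathfrak c(f_b)_0$ is a bijection from $B(\lambda_1)\otimes\cdots\otimes B(\lambda_\ell)$ onto the set of all faithfully colored tabloids of weight $\lambda$. Along this bijection $\weight(b)=\shape(\mathfrak c(f_b)_0)$ and, by Proposition~\ref{prop:zmaj}, $\zigmaj(b)=\cocharge(\mathfrak c(f_b)_0)$; hence~\eqref{hallinB} reindexes to
\[
\tilde H_\lambda(x;0,t)=\sum_{b\in B(\lambda_1)\otimes\cdots\otimes B(\lambda_\ell)} t^{\zigmaj(b)}\,x^{\weight(b)}\,.
\]

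For step (ii), Corollary~\ref{cor:zigcrystal} says $\zigmaj$ is invariant under $\tilde e_i,\tilde f_i$, hence constant on each connected component of $B(\lambda_1)\otimes\cdots\otimes B(\lambda_\ell)$. By Kashiwara's decomposition~\eqref{hcrystal}, this crystal is a disjoint union of components, each a copy of $B(\nu)$ for $\nu$ the partition equal to the weight of the component's unique highest weight vertex; for this tensor of single-row crystals the highest weight vertices are exactly the Yamanouchi ones, as noted in the discussion following~\eqref{hcrystal}; and $\sum_{b\in B(\nu)}x^{\weight(b)}=s_\nu(x)$. Grouping the displayed sum by component therefore gives
\[
\sum_{b} t^{\zigmaj(b)}\,x^{\weight(b)}=\sum_{b\ \text{Yamanouchi}} t^{\zigmaj(b)}\,s_{\weight(b)}(x)\,,
\]
as claimed.

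The one place I expect to need care is the bookkeeping in step (i): since the companion map swaps shape and weight, the condition ``weight $\lambda$'' in~\eqref{hallinB} is precisely what lines up with the tensor $B(\lambda_1)\otimes\cdots\otimes B(\lambda_\ell)$ from which the extraction sets $Z(\cdot)$ are built, while the crystal weight $\weight(b)$ is what governs the monomial $x^{\shape}$. The substantive facts --- that $\zigmaj(b)=\cocharge(\mathfrak c(f_b)_0)$ and that $\zigmaj$ is invariant along crystal edges --- are already established. As a cross-check, the statement also follows by reading the proposition on Garsia--Procesi modules at $\mu=\lambda$ through the crystal isomorphism $\mathcal B_0^\dagger(\lambda)\cong B(\lambda_1)\otimes\cdots\otimes B(\lambda_\ell)$ of Proposition~\ref{prop:hwtss}, under which $\cocharge(b_0)$ corresponds to $\zigmaj(b)$ and reverse colored semi-standard tableaux to Yamanouchi vertices.
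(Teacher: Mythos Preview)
Your proposal is correct and follows essentially the same route as the paper: start from the expansion~\eqref{hallinB}, transport it to $B(\lambda_1)\otimes\cdots\otimes B(\lambda_\ell)$ via the companion map (the paper phrases this as applying Proposition~\ref{prop:hwtss}), identify the statistic with $\zigmaj$ through Proposition~\ref{prop:zmaj}, and then use Corollary~\ref{cor:zigcrystal} together with the Yamanouchi characterization of highest weights to pass to Schur functions. Your step~(i) spells out the bookkeeping of the bijection $b\mapsto\mathfrak c(f_b)_0$ more explicitly than the paper does, but the argument is the same.
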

\begin{proof}
The expansion~\eqref{hallinB} of $\tilde H_\mu(x;0,t)$ over elements of
$B^\dagger$ can be converted to one involving $B(\lambda_1)\otimes\cdots\otimes B(\lambda_\ell)$ 
by Proposition~\ref{prop:hwtss}.  
As reviewed in~\eqref{hwt}, the highest weights of $B=B(\lambda_1)\otimes\cdots\otimes B(\lambda_\ell)$ 
are characterized by the Yamanouchi
condition.   Corollary~\ref{cor:zigcrystal} implies that $\zigmaj$ is constant on
connected components of $B$, which are in correspondence with Schur functions indexed 
by the the highest weights.
\end{proof}

\subsection{Zero inversion map on Macdonald fillings}
The faithful recoloring of vertices in $B^\dagger_0$ not only gives
a formula for the energy function, it exposes
an identification between inversionless fillings and tabloids used
in~\cite{HHL}.  Define 
$$
\mathfrak s: \{F\in\F(\lambda,\alpha) :inv(F)=0\} \to \T(\lambda,\alpha)
$$
on a filling $F$ simply by rearranging entries in each row
into non-decreasing order from west to east.  
The inverse of $\mathfrak s$
is defined in~\cite{HHL}(Proof of Proposition 7.1) by 
uniquely constructing an inversionless filling from a collection 
of multisets, $m=\{m_1,m_2,\ldots, m_k\}$.
The unique placement of entries from $m_i$ into row $i$ of $f$ 
so that $\inv(f)=0$ requires first that entries of $m_1$ are put into the bottom
row in non-decreasing order, from west to east.  Proceeding to the next row $r=2$, 
letters of $m_r$ are placed in columns $c$ from 
west to east as follows: an empty cell $(r,c)$ is filled with the smallest value
that is larger than the entry in $(r-1,c)$.  If there are
no values remaining that are larger than that in $(r-1,c)$, 
the smallest available value is chosen.  The filling $f$ arises
from iteration on rows and by construction, $f$ has no inversion triples.

In fact, $\mathfrak s^{-1}(f)$ is none other than the companion preimage of the faithful 
recoloring of $f$'s companion.  That is,
the companions of $f$ and $\mathfrak s(f)$ are both manifestations
of the same tabloid, one is reverse colored and the other is faithful.

\begin{prop}
For any tabloid $f\in\T(\gamma,\cdot)$,
$$
\mathfrak c(\mathfrak s^{-1}(f))=\mathfrak c(f)_0\,.
$$
\end{prop}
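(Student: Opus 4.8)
The plan is to show that each side of $\mathfrak c(\mathfrak s^{-1}(f))=\mathfrak c(f)_0$ is a faithfully colored tabloid and that the two share the same underlying uncolored tabloid; uniqueness of faithful colorings then forces them to agree. The first step is to record what the \emph{uncolored} tabloid underlying a companion depends on. Unwinding $\mathfrak c=\iota\circ\mathfrak f^{-1}$, the colored letters occupying row $e$ of $\mathfrak c(F)$ are exactly $\{\,r_c : F_{(r,c)}=e\,\}$; hence, after forgetting colors, row $e$ becomes the multiset containing the row index $r$ with multiplicity equal to the number of $e$'s lying in row $r$ of $F$. Thus the uncolored tabloid underlying $\mathfrak c(F)$ is determined purely by the \emph{row content} of $F$, i.e.\ by the multiset of entries in each row.

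Next I would use that $\mathfrak s$ only permutes entries within rows, so $\mathfrak s^{-1}(f)$ has exactly the same row content (and the same weight) as $f$. By the previous step, $\mathfrak c(\mathfrak s^{-1}(f))$ and $\mathfrak c(f)$ have the \emph{same} underlying uncolored tabloid $T_0$. Since $\mathfrak c(f)_0$ is by definition obtained by stripping $\mathfrak c(f)$ of its colors, which yields precisely $T_0$, and then faithfully coloring, $\mathfrak c(f)_0$ is the faithful coloring of $T_0$. (Here Proposition~\ref{prop:revcs} also identifies $\mathfrak c(f)$ as the reverse coloring of $T_0$, since $f$ is a tabloid, matching the picture that the reverse- and faithfully colored manifestations of $T_0$ are the companions of $f$ and $\mathfrak s^{-1}(f)$.)

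Finally, $\mathfrak s^{-1}(f)$ is inversionless by construction, so Proposition~\ref{prop:charge2maj} tells us $\mathfrak c(\mathfrak s^{-1}(f))$ is \emph{itself} a faithfully colored tabloid, and its underlying uncolored tabloid is $T_0$. Because every tabloid has a unique faithful coloring (as used in the proof of Theorem~\ref{the:Hsuper}, parallel to Remark~\ref{re:manifest} for reverse colorings), a faithfully colored tabloid with underlying uncolored tabloid $T_0$ can only be the faithful coloring of $T_0$; hence $\mathfrak c(\mathfrak s^{-1}(f))=\mathfrak c(f)_0$. The argument is essentially a composition of already-established structural facts about $\mathfrak c$, $\mathfrak s$, and the two distinguished colorings; the only place requiring care is the first step — reading off from $\mathfrak c=\iota\circ\mathfrak f^{-1}$ that the forgetful image depends on row content alone, and not on the column positions of the entries of $F$ — but this is immediate once the action of $\mathfrak c$ is spelled out, so I expect no genuine obstacle.
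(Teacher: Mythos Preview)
Your proposal is correct and follows essentially the same route as the paper's proof: both argue that $\mathfrak s^{-1}(f)$ and $f$ differ only by rearranging entries within rows, so their companions share the same underlying uncolored tabloid; then Proposition~\ref{prop:revcs} and Proposition~\ref{prop:charge2maj} identify the two companions as the reverse and faithful colorings of that tabloid, and uniqueness of the faithful coloring finishes. Your write-up is somewhat more explicit in spelling out why the uncolored companion depends only on row content, but the logic and the cited ingredients are identical to the paper's.
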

\begin{proof}
For any tabloid $f$, $\mathfrak s^{-1}(f)$ and $f$ differ only by the
rearrangement of entries within rows.  Thus, by definition of companions,
$T'=\mathfrak c(\mathfrak s^{-1}(f))$ and $T=\mathfrak c(f)$ differ 
only by their colorings.  
Since $f$ is a tabloid, $T$ is reverse colored by
Proposition~\ref{prop:revcs} and since $\mathfrak s^{-1}(f)$ is inversionless,
$T'$ is faithfully colored by Proposition~\ref{prop:charge2maj}.
Each of these colorings is uniquely defined on the manifest tabloid
and the claim follows.
\end{proof}

%\begin{example}  Consider the multisets $m_1 = \{2,2,3\}, m_2=\{1,2,3\},$ and $m_3=\{1,1,3\}$.  The
%only way to make those the rows of a filling without inversion triples is
%\[ \tableau[scY]{ 1,3,1 | 3,1,2 | 2,2,3 }\;. \]
%Then if we make the $i\th$ $\zigmaj$ subsequence the $i\th$ column, we see that it is equivalent
%to computing $\mathfrak s(T)$:
%\[ \tableau[scY]{ 1_1,3_2,1_3 | 3_1,1_2,2_3 | 2_1,2_2,3_3 }\;. \]
%\end{example}

\subsection{$K$-theoretic implications}

\label{sec:ktheory}

To give a flavor of how circloid crystals fit into $K$-theoretic Schubert calculus,
consider tabloids with the property that their conjugate is also a tabloid.
Such a filling is called a {\it reverse plane partition}.
If the {\it weight} of a reverse plane partition is defined to be the
vector $\alpha$ where $\alpha_i$ records the number of columns containing
an $i$, the weight generating functions are representatives for
$K$-homology classes of the Grassmannian:
for skew partition $\nu/\lambda$,
$$
g_{\nu/\lambda}(x) = \sum_{r\in \RPP(\nu/\lambda,\cdot)} x^{\weight(r)}\,.
$$

From this respect, repeated entries in a column of the reverse plane partition $r$ 
are superfluous motivating us to instead identify $r$ with the tabloid $f$
obtained by deleting any letter that is not the topmost in its column and then 
left-justifying letters in each row.  The {\it inflated shape} of tabloid $f$ 
is defined to be the shape of its inflation.  This recovers the shape of the
reverse plane partition $r$ from whence $f$ came.

\begin{prop}
\label{prop:ktheory}
For skew partition $\nu/\lambda$,
\begin{equation}
\label{ktheory}
g_{\nu/\lambda}(x) = \sum_{Yamanouchi~tabloid~T\atop
{inflated shape}(T)=\nu/\lambda}
s_{\weight(f)}(x)\,.
\end{equation}
\end{prop}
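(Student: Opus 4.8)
The plan is to reduce the statement to the companion dictionary already established in Proposition~\ref{prop:yamacs} and Proposition~\ref{prop:revcs}, together with the combinatorial definition of $g_{\nu/\lambda}$ via reverse plane partitions. The starting point is the definition $g_{\nu/\lambda}(x)=\sum_{r\in\RPP(\nu/\lambda,\cdot)}x^{\weight(r)}$, where $\weight(r)_i$ counts columns of $r$ containing an $i$. First I would make precise the map $r\mapsto f$ described in the preamble: delete from each column of $r$ every entry that is not topmost in its column, then left-justify the surviving entries in each row. Since $r$ is a reverse plane partition, its columns are strictly increasing \emph{after} this deletion (equal entries stacked in a column collapse to one), so each row of $f$ is weakly increasing (it inherits the row-weak-increase of $r$) and $f$ is a tabloid whose weight equals $\weight(r)$ in the column-counting sense. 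Conversely, given a tabloid $f$ whose inflation has shape $\nu/\lambda$, one reconstructs $r$ by re-inserting repeated entries down each column of the inflated shape. I would check that this $r\mapsto f$ is a bijection between $\RPP(\nu/\lambda,\cdot)$ and the set of tabloids $f$ with $\mathrm{inflated\ shape}(f)=\nu/\lambda$, preserving the weight vector $\alpha$.

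Next I would bring in the companion map. Given such a tabloid $f$, form its inflation---a punctured colored tabloid whose columns are prismatically increasing---and recall that $\mathfrak c^{-1}=\mathfrak f\circ\iota^{-1}$ sends the \emph{reverse-colored} circloid manifestation of $f$ to a filling $F$ of shape $\nu/\lambda$ that is column-increasing (by Proposition~\ref{prop:revcs}, $F$ is a tabloid iff its companion is reverse colored; and a reverse colored tabloid with prismatically increasing columns in its inflation has \emph{letters} increasing up columns). The key point is that the inflated shape of $f$ records exactly the shape $\nu/\lambda$ of the underlying skew filling $F$; this is essentially the observation already flagged in the text that inflation ``recovers the shape of the reverse plane partition.'' I would then invoke part (3) of Proposition~\ref{prop:yamacs}: a filling $F\in\F(\nu/\lambda,\mu)$ has letters increasing in columns if and only if its companion $T$ is $\lambda$-Yamanouchi. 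Combining with Corollary~\ref{cor:revcompanion}, $P\mapsto$ its companion is a bijection between $\SSYT(\nu/\lambda,\mu)$ and reverse-colored $\lambda$-Yamanouchi tabloids, i.e.\ manifestations of $\lambda$-Yamanouchi tabloids of weight $\nu-\lambda$.

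With the dictionary in place, the identity is then a bookkeeping statement. On the left, $g_{\nu/\lambda}(x)$ expands, via the bijection $r\leftrightarrow f$, as $\sum_f x^{\weight(f)}$ over tabloids $f$ with inflated shape $\nu/\lambda$. Running $f$ through its reverse-colored circloid and applying $\mathfrak c^{-1}$, each such $f$ corresponds to a column-increasing skew filling $F$ of shape $\nu/\lambda$; grouping these $F$ by their $\lambda$-Yamanouchi companions and using that column-increasing $F$ of shape $\nu/\lambda$ and weight $\mu$ are the semistandard tableaux $\SSYT(\nu/\lambda,\mu)$, the sum reorganizes as $\sum_{T}\bigl(\sum_{F}x^{\weight(f)}\bigr)$ over Yamanouchi tabloids $T$, with the inner sum producing a single Schur function $s_{\weight(f)}(x)$---precisely the classical identity that the generating function for $\SSYT(\nu/\lambda,\cdot)$ with fixed associated highest-weight datum is a Schur function. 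Here one must be careful that the exponent on the right, $\weight(f)$, is the column-counting weight of the \emph{tabloid} $T$ rather than the content of $F$; tracking this through $\mathfrak c$ is the one place demanding attention.

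\textbf{The main obstacle} I anticipate is not any single deep step but rather pinning down the precise weight conventions so they agree on both sides: the reverse-plane-partition weight (columns containing $i$), the tabloid weight under the $r\mapsto f$ collapse, the shape of $F$ versus the inflated shape of $f$, and the content of $F$ versus the Schur index on the right. The cleanest route is to phrase everything in terms of the circloid/colored-tabloid picture from the start---where ``inflated shape'' is literally the shape of the inflation and the companion map is a stated bijection---so that the ``crystal structure on reverse colored circloids'' alluded to just before the proposition does the organizing work, and the final Schur-function collapse is the standard highest-weight enumeration already used implicitly in Corollary~\ref{cor:revcompanion}.
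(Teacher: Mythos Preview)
Your plan has a genuine gap at the step where you claim that applying $\mathfrak c^{-1}$ to ``the reverse-colored circloid manifestation of $f$'' yields a filling $F$ of shape $\nu/\lambda$. It does not. The tabloid $f$ obtained from the collapse $r\mapsto f$ has some straight shape $\gamma$ and content $\alpha$; its reverse-colored manifestation lies in $\CT(\gamma,\alpha)$, and $\mathfrak c^{-1}$ sends this to a filling in $\F(\alpha,\gamma)$, not $\F(\nu/\lambda,\cdot)$. The skew shape $\nu/\lambda$ only appears as the shape of the \emph{inflation} of the reverse-colored $f$, which is a punctured colored tabloid on which $\mathfrak c^{-1}$ is not defined. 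So there is no companion-map bijection taking your $f$'s to column-increasing skew fillings of shape $\nu/\lambda$, and Proposition~\ref{prop:yamacs}(3) and Corollary~\ref{cor:revcompanion} do not apply in the way you describe.

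Because of this, your final ``Schur-function collapse'' is unjustified. Grouping by $\lambda$-Yamanouchi companions cannot produce Schur functions: the companion map is a bijection, so each fiber is a singleton, and there is no ``classical identity'' that turns $\sum_F x^{\weight(f)}$ over a single $F$ into $s_{\weight(f)}$. What is actually needed---and what the paper proves---is that the set $\{f:\text{inflated shape}(f)=\nu/\lambda\}$ is closed under the crystal operators $\tilde e_i,\tilde f_i$ on $B(\gamma_1)\otimes\cdots\otimes B(\gamma_\ell)$. This is the nontrivial computation: one checks that when $\tilde e_i$ changes an $i{+}1$ to an $i$ in row $r$ of $f$, every $i{+}1$ in row $r{+}1$ is already paired with an $i$ in row $r$, so no $i{+}1$ in row $r{+}1$ sits over an empty inflation cell and the inflated shape is unchanged. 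Once that is established, the induced subgraph on tabloids with inflated shape $\nu/\lambda$ is a union of connected components, and the Schur expansion over Yamanouchi highest weights follows. Your plan does not contain this step or any substitute for it.
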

\begin{proof}
For any composition $\gamma$, since $B=B(\gamma_1)\otimes\cdots\otimes B(\gamma_\ell)$
is a crystal graph under $\tilde e_i$, it suffices to show that
the inflated shape of $f\in\T(\gamma,\cdot)$ is the same as the inflated
shape of $\tilde e_i(f)$.  From this, the induced subposet of $B$
on vertices with fixed inflated shape is also a crystal and the
Schur expansion comes from the highest weights.

Consider a filling $f$ and $f'=\tilde e_i(f)$ differing by only one 
letter $i+1$ changed to an $i$ in some row $r$.  The shape of the 
inflation of $f$ can differ from that of $f'$ only if there is an 
$i+1$ in row $r+1$ lying above an empty cell.
However, since the leftmost unpaired $i+1$ in $f$ lies in row $r$,
the process of pairing ensures that every $i+1$ in row $r+1$ is 
paired with an $i$ in row $r$.  Therefore, there are more $i$'s in 
row $r$ of $f$ than there are $i+1$'s in row $r+1$ and the
inflation of $f$ must have an entry smaller than $i+1$ below
the rightmost $i+1$ in row $r+1$.
\end{proof}

An expression for the Schur expansion of $g_{\nu/\lambda}$ over a sum of semi-standard 
tableaux arises as a corollary of Proposition~\ref{prop:ktheory} by applying the companion map
to~\eqref{ktheory} and using Corollary~\ref{cor:revcompanion}.  Such an expression opens
up the study of problems in $K$-theoretic Schubert calculus to the classical theory of
tableaux.  For example, a simple bijective proof of the $K$-theoretic Littlewood-Richardson
was given in~\cite{LMS} using this approach.

\section{Quasi-symmetric expansion}

It is not difficult to use dual equivalence graphs instead of crystals to deduce
our previous results.  For this, we formulate Macdonald polynomials using
colored words, without mention of shape, in terms of
Gessel's {\it fundamental quasisymmetric function}.  
Defined for any $S\subseteq[n]$, let
$$
Q_S(x)=
\sum_{\substack{i_1\leq\cdots \leq i_n\\ i_j<i_{j+1} \text{ iff } j\in S}}
x_{i_1}^{\beta_1}\cdots x_{i_n}^{\beta_n}\,.
$$

%The statistics maj and inv are defined on permutations $w\in S_n$ by 
%considering $w$ as the unique filling of column shape $(1^n)$ whose reading word
%is $w$.  

Betrayal and cocharge are defined on a colored word $w$ by computing these statistics
on the circloid $C$ obtained by writing entries of $w$ counter-clockwise on a circle.
Since the shape of a circloid has no bearing on the statistics, it suffices to write
each entry in its own sector so that $C$ has shape $(1^{\ell(w)})$.

\begin{theorem}
For partition $\mu\vdash n$,
$$
\tilde H_\mu(X;q,t) = 
\sum_{colored~word~w\atop \weight(w)=\mu}
q^{\ski(w)}\, t^{\cocharge(w)}\, 
Q_{\des_n(w)}(x)\,,
$$
where $\des_n(w)=\{n-d:d\in \des(w)\}.$
\end{theorem}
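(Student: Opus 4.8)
The plan is to derive this quasi-symmetric expansion directly from Theorem~\ref{the:rjformula}, using the standard combinatorics relating Schur functions to fundamental quasisymmetric functions via tableaux. First I would recall that for any partition $\lambda\vdash n$, one has the expansion $s_\lambda(x)=\sum_{S} Q_S(x)$, where $S$ ranges over the descent sets of standard Young tableaux of shape $\lambda$; more precisely, if we write $s_\lambda(x)=\sum_{Q\in\SYT(\lambda)} Q_{\des(Q)}(x)$, then grouping the terms of Theorem~\ref{the:rjformula} by the shape $\lambda$ admitted by each circloid and refining $s_{\shape(C)}$ into its quasisymmetric pieces would give a sum over pairs $(C,Q)$ with $Q$ a standard tableau whose shape is admitted by $C$. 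The key is then to set up a weight-preserving bijection between such pairs $(C,Q)$ (with $\ski$ and $\cocharge$ recorded) and colored words $w$ of weight $\mu$ (with the same statistics), matching $\des(Q)$ with $\des_n(w)$ up to the reversal $d\mapsto n-d$.

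The bijection I would use is essentially built into the paper's framework. A circloid $C\in\C(\gamma,\mu)$ for a \emph{strict} composition $\gamma$ already corresponds to a single colored word $w=w_n\cdots w_1$, and the set of shapes admitted by $w$ — partitions in particular — corresponds under $(\gamma_\ell,\ldots,\gamma_1)\mapsto\mathrm{set}(\gamma)$ to the multisets containing $\des(w)=\{p:w_p<w_{p+1}\}$. So the data of ``a partition shape $\lambda$ admitted by $w$ together with a standard tableau $Q$ of that shape'' is exactly the data needed to expand $s_\lambda$ as a sum of $Q_S$ with $\des(Q)\supseteq$ (the relevant descent condition). I would argue that summing over all partitions $\lambda$ admitted by $w$ and all standard $Q$ of shape $\lambda$ collapses, by the defining property of $Q_S$ and the characterization of admitted shapes in terms of $\des(w)$, to the single term $Q_{\des_n(w)}(x)$ attached to $w$ itself; the reversal $d\mapsto n-d$ appears because the reading convention for circloids/words runs $w_n\cdots w_1$ while $\mathrm{set}(\gamma)$ is built from the small end. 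Since $\cocharge$, $\ski$, and $\weight$ of $C$ depend only on the underlying colored word $w$ (as emphasized in the paragraph preceding the theorem, where one may take the shape to be $(1^{\ell(w)})$), these statistics pass through the bijection unchanged.

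Concretely, the steps in order: (1) start from $\tilde H_\mu(x;q,t)=\sum_{C\in\C(\cdot,\mu)} q^{\ski(C)}t^{\cocharge(C)}x^{\shape(C)}$ and collect circloids by their underlying colored word $w$, so that $\sum_{C:\,\mathrm{word}(C)=w} x^{\shape(C)}=\sum_{\lambda\text{ admitted by }w} s_\lambda(x)$ once we note that each partition shape $\lambda$ admitted by $w$ is realized by exactly one circloid with word $w$, and that these exhaust the partition-shaped circloids with weight $\mu$; (2) expand each $s_\lambda(x)$ over standard tableaux as $\sum_{Q\in\SYT(\lambda)}Q_{\des(Q)}(x)$; (3) use the bijection between admitted shapes of $w$ and super-multisets of $\des(w)$, together with the standard fact that $\sum_{\lambda} \sum_{Q\in\SYT(\lambda)} Q_{\des(Q)} = \sum_{S\supseteq \des(w)} Q_S$ reorganizes so that the contribution of $w$ is the single quasisymmetric function indexed by $\des_n(w)$; and (4) check the reversal of indices and confirm the statistics are carried along. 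The main obstacle I expect is step~(3): making the index bookkeeping rigorous, i.e. pinning down precisely why summing $Q_S$ over all super-multisets of $\des(w)$ coming from admitted shapes yields exactly $Q_{\des_n(w)}$ and not a larger sum — this requires carefully aligning the circloid reading order, the map $\mathrm{set}(\gamma)$, and Gessel's descent convention, and is where the reversal $d\mapsto n-d$ must be tracked with care. Everything else (the weight-preservation, the fact that $\ski$ and $\cocharge$ are word-intrinsic) is immediate from the definitions and from Theorem~\ref{the:rjformula}.
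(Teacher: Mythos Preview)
Your approach misreads Theorem~\ref{the:rjformula}: the formula there is a \emph{monomial} expansion, with $x^{\shape(C)}$ a single monomial indexed by the composition $\shape(C)$, not the Schur function $s_{\shape(C)}$. Consequently the identity you assert in step~(1), namely $\sum_{C:\,\mathrm{word}(C)=w} x^{\shape(C)}=\sum_{\lambda\text{ admitted by }w} s_\lambda(x)$, is false --- the left side is a sum of monomials over \emph{all} (weak) compositions admitted by $w$, not a sum of Schur functions over the partitions among them. The entire detour through standard Young tableaux and the Gessel expansion of $s_\lambda$ in steps~(2)--(3) is built on this misreading and cannot be carried out as stated.

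The paper's proof is far more direct and never touches Schur functions. One groups the circloids $C\in\C(\cdot,\mu)$ by their underlying colored word $w$; for fixed $w$ the circloids correspond bijectively to the weak compositions $\beta$ admitted by $w$, i.e.\ those with $\des(w)\subseteq\mathrm{set}(\beta)$. Since $\cocharge$ and $\ski$ depend only on $w$ (a point you correctly identified), the inner sum becomes $\sum_{\beta:\,\des(w)\subseteq\mathrm{set}(\beta)} x^\beta$, which one recognizes \emph{directly} from the definition as $Q_{\des_n(w)}(x)$; the reversal $d\mapsto n-d$ appears because the shape is recorded as $(\beta_\ell,\ldots,\beta_1)$ relative to the reading $w=w_n\cdots w_1$. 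So the only substantive step is the elementary identification of a monomial sum with a fundamental quasisymmetric function --- no bijection with pairs $(C,Q)$, no standard tableaux, no collapsing of Schur expansions.
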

\begin{proof}
Given a fixed colored word $w$ of weight $\mu$, consider the
set of weak compositions $\beta$ for which  $\shape(w)=\beta$.
For each such $\beta$, $\des(w)\subseteq{\rm set}(\beta)$.
Therefore, a unique circloid $C\in \C(\beta,\mu)$
is obtained by counter-clockwise inscribing the entries of $w$ 
on a circle, separated into sectors of sizes $\beta_\ell,\ldots,\beta_1$.
Since the computation of cocharge and betrayal of
circloid $C$ does not involve its shape, every $C$ arising in this
way satisfies
$\cocharge(C)=\cocharge(w)\;\text{and}\;\ski(C)=\ski(w)$.
Theorem~\ref{the:rjformula} can thus be rewritten as
$$
\tilde H_\mu(X;q,t) = 
\sum_{C\in\C(\cdot,\mu)}
q^{\ski(C)}\, t^{\cocharge(C)}\, 
 x^{\sh(C)}
=
\sum_{colored~word~w\atop \weight(w)=\mu} 
%\sum_{\beta: \des(w)\subseteq{\rm set}(\beta)}
\sum_{\beta: \sh(w)=\beta}
q^{\ski(w)}\, t^{\cocharge(w)}\, x^\beta
\,.
$$
The claim follows by noting that
$$
\sum_{\beta: \sh(w)=\beta} x^\beta
=
\sum_{\substack{\gamma\models\ell(w)\\ n-\des(w)\subseteq{\rm set}(\gamma)}} 
\sum_{i_1<\cdots<i_{\ell(\gamma)}}
x_{i_1}^{\gamma_1}\cdots x_{i_{\ell(\gamma)}}^{\gamma_{\ell(\gamma)}}\,.
$$
\end{proof}

\bibliographystyle{alpha}
\bibliography{circloids}
\label{sec:biblio}

\end{document}